\theoremstyle{plain}
\newtheorem{thm}{Theorem}[section]
\newtheorem{coro}[thm]{Corollary}
\newtheorem{prop}[thm]{Proposition}
\newtheorem{lem}[thm]{Lemma}
\newtheorem{fact}[thm]{Fact}
\newtheorem{defi}[thm]{Definition}
\theoremstyle{definition}
\newtheorem{exam}[thm]{Example}
\newtheorem*{notation}{Notation}
\newtheorem{constr}[thm]{Construction}
\newtheorem{rem}[thm]{Remark}
\newtheorem{quest}[thm]{Question}
\newcommand{\R}{\mathbb{R}}
\newcommand{\Q}{\mathbb Q}
\newcommand{\N}{\mathbb N}
\newcommand{\Z}{\mathbb Z}
\DeclareMathOperator{\CO}{CO}
\DeclareMathOperator{\LO}{LO}
\DeclareMathOperator{\rot}{rot}
\DeclareMathOperator{\Homeo}{Homeo}
\DeclareMathOperator{\Hom}{Hom}
\DeclareMathOperator{\ord}{ord}
\DeclareMathOperator{\id}{\mathit{id}}
\newcommand{\vsp}{\vspace{0.1cm}}
\newcommand{\vs}{\vspace{0.3cm}}
\begin{document}

\title{On the  number of circular orders on a group}

\author{Adam Clay, Kathryn Mann and Crist\' obal Rivas \footnote{A. Clay was partially supported by NSERC grant RGPIN-05465, K. Mann was partially supported by NSF grant DMS-1606254 and C. Rivas was partially supported by FONDECYT 1150691. }  }

\maketitle

\begin{abstract}
We give a classification and complete algebraic description of groups allowing only finitely many (left multiplication invariant) circular orders.  In particular, they are all solvable groups with a specific semi-direct product decomposition.    This allows us to also show that the space of circular orders of any group is either finite or uncountable.  As a special case and first step, we show that the space of circular orderings of an infinite Abelian group has no isolated points, hence is homeomorphic to a cantor set.  

\end{abstract}


\section{Introduction}

Let $G$ be a group.  A \emph{left order} on $G$ is a total order invariant under left multiplication, i.e. such that $a<b \Leftrightarrow ga<gb$ holds for all $a,b,g \in G$.  It is well known that a countable group is left-orderable if and only if it embeds into the group $\Homeo_+(\R)$ of orientation-preserving homeomorphisms of $\R$. Similarly, a \emph{circular order} on a group $G$ is defined by a left-invariant \emph{cyclic orientation cocycle} $c: G^3 \to \{\pm1, 0\}$ satisfying a ``nondegenerate cocycle" condition (see definition \ref{cocycle def}), and for countable groups this is equivalent to the group embedding into $\Homeo_+(S^1)$ (see Proposition \ref{prop dyn real}).


Understanding the class of groups allowing only finitely many (left or circular) orders is a natural question both from the group-order perspective and also from the dynamical perspective as these groups present strong constrains on their action on 1-manifolds. In fact, the sets $\LO(G)$ and $\CO(G)$ of all left or circular orders on a group carry a natural Hausdorff topology (see Section \ref{sec preliminaries}) and, as proved by the second and third author in \cite{MR}, isolated points therein corresponds to actions having a strong form of rigidity.


In \cite{tararin}, Tararin gave a complete classification of all groups $G$ with only finitely many left orders.  These groups are now known as \emph{Tararin groups}, and are described in Section \ref{sec Tararins} below.   Linnell \cite{linnell} later used Tararin's work, along with the dynamics of the conjugation action of $G$ on $\LO(G)$, to show that, for any group $G$, the set $\LO(G)$ is either finite or uncountable.  Navas \cite{navas} later gave a different proof of Linnell's result for countable groups using the dynamics of actions of $G$ on $\R$ instead of on $\LO(G)$  (an insight later generalized for any group in \cite{NRC}). In this approach Tararin's classification was also crucial. 
However, neither Linnell's nor Navas' proof sheds much light on spaces of circular orders.   In fact, for a fixed group $G$, the spaces $\CO(G)$ and $\LO(G)$ are typically very different.  As a concrete example, we note in Section  \ref{sec Tararins} that every Tararin group admits \emph{infinitely} many circular orders.  

The purpose of this work is to give a complete classification of groups admitting only finitely many circular orders, and a proof that for any group $G$, the space $\CO(G)$ is finite or uncountable.  Precisely, we show: 

\vs
\noindent{\bf Theorem A.} 
{\em Let $G$ be a circularly orderable group. Then $\CO(G)$ is either finite or uncountable. Moreover, the following two conditions are equivalent:
\begin{enumerate}
\item The group $G$ admits only finitely many circular orderings.
\item Either $G$ is finite cyclic, or $G\simeq T\rtimes \Z_n$, where $n \in 2\Z$, $T$ is a Tararin group with maximal convex subgroup $T_1$, and $\Z_n$ acts by inversion on $T/T_1$.
\end{enumerate}
}

\noindent Here $\Z_n = \Z/n\Z$ denotes the cyclic group of order $n$.  The definition of \emph{maximal convex subgroup} is reviewed in Section \ref{sec preliminaries}. Note that,  since Tararin groups are solvable, Theorem A implies in particular that a group with only finitely many circular orders is solvable.

A major tool in the proof is the notion of the \emph{linear part} of a circular order (see Proposition \ref{prop linear part}), which was introduced in \cite{MR} (although a similar definition in the context of bi-invariant circular orders on groups appears in \cite{JP}).   Another key tool is the result of D. Witte-Morris \cite{morris} that an amenable left-orderable group must be {\em locally indicable}, that is, each of its finitely generated subgroups admits a surjection to $\Z$.  We make extensive use of this in order to show that the linear part of any circular order on a group of the form $G\simeq T\rtimes \Z_n$ (as in Theorem A) is in fact a Tararin group. See for instance Proposition \ref{prop tararin char}.

As an intermediate step in the proof, we study orders on Abelian groups and prove 

\vs
\noindent{\bf Theorem B.} {\em  The space of circular orders on an infinite Abelian group $A$ has no isolated points. In particular, when $A$ is countably infinite $\CO(A)$ is homeomorphic to a Cantor set.}
\vs

This result was proved for the special case of finitely generated Abelian groups (using, in particular, that these have finite torsion subgroup) in \cite[\S 3]{BS}.  Their methods are somewhat different and involve a partition of the space of $\CO(G)$ using the dynamics of induced actions on $S^1$.   
\vs

Theorem A raises the obvious problem of computing $|\CO(G)|$ in the cases where it is finite.  

\begin{quest}  Given $G = T \rtimes \Z_n$ as in Theorem A, can $|\CO(G)|$ be expressed in terms of the algebraic structure of $G$? 
\end{quest}

It is easy to see that $|\LO(T)|$ is a lower bound, and to give examples of groups where $|\LO(T)| = |\CO(G)|$, and where $|\LO(T)| < |\CO(G)|$ hold.  We expect that the tools developed here (e.g. the notion of ``inverting base" for $G$) provide a good framework for answering this question; see Section \ref{number sec} for further discussion.

\paragraph{Outline.} 
The organization of this paper is as follows. In Section \ref{sec preliminaries} we review definitions and known results, and prove a variation of H\"older's theorem that holds for circularly ordered groups.  Section \ref{sec abelian} contains the proof of Theorem B which is latter used in Section \ref{cardinality sec}. In Section \ref{sec Tararins}   we show that assertion 2 implies assertion 1 in Theorem A, this is the most technical part of the work.   Finally, in Section \ref{cardinality sec} we show that 1 implies 2 in Theorem A, and simultaneously show that there are no groups whose space of orders is countably infinite.

\section{Preliminaries}
\label{sec preliminaries}

\begin{defi}  \label{cocycle def}
Let $S$ be a set. A \emph{circular order} on $S$ is a function
$ c: S^3\to\{\pm1,0\}$ satsifying
 \begin{enumerate}[label=\roman*)]
\item $c^{-1}(0)=\triangle(S)$, where $\triangle(S):=\{(a_1,a_2,a_3)\in S^3\mid a_i=a_j,\text{ for some } i\neq j\}$,
\item $c$ is a cocycle, that is $c(a_2,a_3,a_4)-c(a_1,a_3,a_4)+c(a_1,a_2,a_4)-c(a_1,a_2,a_3)=0$ for all $a_1,a_2,a_3,a_4\in S$.

\end{enumerate}
A group $G$ is {\em circularly orderable} if it admits a circular order $c$ which is left-invariant in the sense that $c(a_1,a_2,a_3) = c(ga_1, ga_2, ga_3)=1$ for all $g,a_1,a_2,a_3 \in G$.   
\end{defi}

The topology on $\CO(G)$ is the subset topology from the function space $\{1, -1, 0\}^{G \times G \times G}$.  A neighborhood basis of $c \in \CO(G)$ is given by sets of the form 
$$O_S := \{ c' \in \CO(G) : c(s_1,s_2,s_3) = c'(s_1,s_2,s_3) \text{ for all } s_1,s_2,s_3 \in S\}$$ 
as $S$ ranges over all finite subsets of $G$. This makes $\CO(G)$ a totally disconnected, compact space, which is metrizable whenever $G$ is countable. By convention we say that two circular orders $c$ $c'$ on $G$ \emph{agree on} $S\subseteq G$, if $c_1(s_1,s_2,s_3)=c_2(s_1,s_2,s_3)$ holds for all $s_1,s_2,s_3\in S$.

In the same spirit, a neighborhood basis of $<$ in $\LO(G)$ is given by sets of the form $\{ <' : s_1 <' s_2 \Leftrightarrow s_1< s_2 \text{ for all } s_1, s_2 \in S\}$, where $S$ again ranges over finite sets.

The remainder of this section discusses various ways to build left-orders from circular orders, and vice versa.  
We start with some definitions.

\begin{defi}\label{def order by restriction}
Let $(G,c)$ be a circularly ordered group.  A subgroup $H \subset G$ is said to be \emph{left-ordered by restriction} if we have $c(h^{-1}g^{-1}, id,  gh) = 1$ whenever $c(h^{-1}, id, h) = c(g^{-1}, id, g) =1$.  When $H \subset G$ is left-ordered by restriction, we define the \textit{restriction ordering} of $H$ to be the left-order on $G$ with positive cone (i.e. the set of elements greater than identity) given by
\[ P = \{ h \in H \mid c(h^{-1}, id, h) =1 \}.
\]
\end{defi}

\begin{defi}
A subgroup $H$ of a circularly ordered group $(G,c)$ is \emph{convex} if $H$ is left-ordered by restriction and whenever one has $c(h_1, g, h_2) = 1$ and $c(h_1, id, h_2) =1$ with $h_1, h_2$ in $H$ and $g \in G$, then $g \in H$.  

This generalizes the standard definition of convex for left ordered groups; recall that a subgroup $H$ in $(G, <)$ is convex if for any $h_1,\, h_2\in H$ and $g\in G$, if $h_1 < g < h_2$ then $g \in H$.  
\end{defi}

\begin{prop}\label{prop extensions}
If $(G, c)$ is a circularly ordered group and $H$ is convex:
\begin{enumerate} 
\item The cosets $G/H$ inherit a $G$-invariant circular ordering $c_H$ defined by $c_H(g_1H, g_2H, g_3H) = c(g_1, g_2, g_3)$ for every triple $(g_1H, g_2H, g_3H) \in (G/H)^3 \setminus \Delta(G/H)$.
\item If in addition $H$ is normal in $G$, then there is an injective continuous map 
$$\psi:\LO(H)\times \CO(G/H)\to\CO(G),$$ having $c$ in its image.
\end{enumerate}
\end{prop}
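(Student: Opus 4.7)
For Part 1, the main task is to verify that the proposed formula for $c_H$ on non-degenerate triples does not depend on the choice of coset representatives; the cocycle identity and $G$-invariance of $c_H$ then inherit immediately from those of $c$. By left-invariance and handling one coordinate at a time, well-definedness reduces to showing $c(h, a, b) = c(1, a, b)$ whenever $h \in H$ and $a, b \in G$ satisfy $aH, bH \neq H$ and $aH \neq bH$. The crux is the following lemma: \emph{for $h \in H \setminus \{1\}$ and $g \in G \setminus H$, the sign $c(1, h, g)$ equals $+1$ or $-1$ according to whether $h$ lies in the positive cone of the restriction order, and in particular is independent of $g$.} To prove it when $h$ is in the positive cone, $c(h^{-1}, 1, h) = 1$ by definition, and convexity applied to $h_1 = h^{-1}$, $h_2 = h$ forces $c(h^{-1}, g, h) \neq 1$; since the three entries are distinct, $c(h^{-1}, g, h) = -1$. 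Reading off the cyclic order of the four points $\{h^{-1}, 1, h, g\}$ from these two equations shows the positive cyclic order is $(h^{-1}, 1, h, g)$, whence $c(1, h, g) = 1$. The case when $h^{-1}$ is in the positive cone is symmetric. Given the lemma, the cocycle identity applied to $(1, h, a, b)$ gives $c(h, a, b) - c(1, a, b) = c(1, h, a) - c(1, h, b) = 0$, completing well-definedness.

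For Part 2, assume $H$ is normal in $G$. We construct $c' = \psi(<_H, \bar c)$ piecewise according to the coset pattern of the triple $(g_1, g_2, g_3)$: (i) if the three cosets are distinct, set $c'(g_1, g_2, g_3) = \bar c(g_1 H, g_2 H, g_3 H)$; (ii) if all three inputs lie in a single coset $g_0 H$, transport $<_H$ to $g_0 H$ by left translation (well-defined by normality) and take $c'$ to be the cyclic order induced from this linear order via the standard linear-to-cyclic rule; (iii) in the mixed case when exactly two cosets coincide, assign the sign determined by the comparison of the two inputs in the common coset. The main technical step is to verify that $c'$ satisfies the cocycle identity, which may be carried out by a case analysis on the coset pattern of the 4-tuple, or more conceptually by realizing $c'$ as the cyclic-order cocycle of a $G$-action on a circle obtained by blowing up the dynamical realization of $G/H$ acting on its circle $S^1_{\bar c}$: each orbit point is replaced by a copy of $\mathbb{R}$ on which $H$ acts via $<_H$, so that the resulting $G$-action on the new $S^1$ automatically yields a cocycle.

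Injectivity of $\psi$ is immediate: $<_H$ is recovered as the restriction order of $\psi(<_H, \bar c)$ to $H$, and $\bar c$ is recovered by reading $c'$ on distinct-coset triples. Continuity is routine, since the value of $c'$ on any fixed triple depends on only finitely many data points of $(<_H, \bar c)$ and the target is totally disconnected. Finally, to see that $c$ is in the image, we verify $c = \psi(<_H^{\mathrm{restr}}, c_H)$ on each triple type: on distinct-coset triples this is the definition of $c_H$; on same-coset triples, convexity of $H$ in $(G, c)$ implies the restriction of $c$ to the coset is the cyclic order induced by the restriction order; on mixed triples, the sign is determined by the same argument as the key lemma of Part 1, with $g_2$ in the role of $h$ and $g_3$ in the role of $g$. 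The main obstacle is the cocycle verification for $c'$ in Part 2 --- the dynamical realization approach is cleanest but requires care in setting up the blow-up.
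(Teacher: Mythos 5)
Your proof is correct and follows essentially the same route as the paper: Part~1 rests on the identical convexity mechanism (that $c(h^{-1},g,h)=1$ with $h_1=h^{-1},h_2=h\in H$ would force $g\in H$), merely repackaged as the lemma that $c(\id,h,g)$ is independent of $g\notin H$ and finished with one application of the cocycle identity, while Part~2 uses the same lexicographic construction and, like the paper (which cites Calegari), defers the full cocycle verification. The one caveat worth recording is that the blow-up/dynamical-realization route you propose for that verification requires $G$ to be countable, so for general $G$ one should fall back on the direct case analysis.
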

\begin{proof} We first prove 1. It suffices to check that $c_H$ is well-defined, as left-invariance follows from left-invariance of $c$.  

Note that since $c(g_1, g_2, g_3)$ is invariant under cyclic permutation of the $g_i$'s, it suffices to show that $c(g_1, g_2h, g_3) = c(g_1, g_2, g_3)$ for all $h \in H$ and all triples $(g_1, g_2, g_3) \in G^3$ such that $g_i H \neq g_j H$ for $i \neq j$.  For if this holds, we have
\begin{align*}
c(g_1h_1, g_2 h_2, g_3h_3)  &=  c(g_1h_1, g_2 , g_3h_3)= c(g_3h_3, g_1h_1, g_2)\\
&= c(g_3h_3, g_1, g_2) = c(g_2, g_3h_3, g_1)\\
& = c(g_2, g_3, g_1) = c(g_1, g_2, g_3).
\end{align*}
for any $h_1, h_2, h_3 \in H$.  

There are a few cases to check; we give the details of one.  
Let $h \in H$ and suppose that $c(h^{-1}, id, h) =1$.   Suppose for contradiction that $c(g_1, g_2, g_3) = c(g_2^{-1}g_1, \id, g_2^{-1}g_3) = 1$, but  $c(g_1, g_2h, g_3)=-1$ (Note that our assumptions imply $c(g_1, g_2h, g_3) \neq 0$.)   Then $c(g_3, g_2h, g_1) = 1$ and so $c(g_2^{-1}g_3, h , g_2^{-1}g_1) = 1$.  Combining this with $c(g_2^{-1}g_1, \id, g_2^{-1}g_3) = 1$ we get $c(1, g_2^{-1}g_3, h) = 1$.  This implies $c(h^{-1}, g_2^{-1}g_3, h) = 1$, so that $g_2^{-1} g_3 \in H$ by convexity.  This contradicts our assumption that $g_2H \neq g_3H$.  The case $c(h^{-1}, id, h) =-1$ is similar, as is the case $c(g_1, g_2h, g_3)=1$ and $c(g_1, g_2, g_3) =-1$.  Thus $c_H$ is well-defined.

We now prove 2. Suppose that $<_H$ is a left order on $H$, and $\bar c $ a circular order on $G/H$.  Define $c'(g_1,g_2,g_3)=1$ whenever $\bar{c}(g_1H,g_2H,g_3H)=1$. When $(g_1,g_2,g_3)$ is a non-degenerate triple but $g_1H=g_2H\neq g_3H$, then one can declare $c(g_1,g_2,g_3)=1 $ whenever $g_2^{-1}g_1 <_H id$.  This determines also the other cases where exactly two of the elements $g_iH$ coincide.   The remaining case is when $g_1H=g_2H=g_3H$, in which case we declare $c(g_1,g_2,g_3)$ to be the sign of the permutation $\sigma$ of $\{ \id, g_1^{-1}g_2, g_1^{-1}g_3\}$ such that $\sigma(\id) < \sigma(g_1^{-1}g_2) < \sigma(g_1^{-1}g_3)$.   Checking that this gives a well-defined left-invariant circular order is easy and left to the reader (see details in \cite[Theorem 2.2.14]{Calegari}).  Certainly, if $\bar c =c_H$ and $\leq_H$ is the is restriction of $c$ to $H$ (see Definition  \ref{def order by restriction}), then $c'$ coincides with $c$. So $\psi$ is an injection having $c$ in its image.

Continuity of $\psi$ is easy. If $S$ is a finite subset of $G$ on which we want to approximate $c'=\psi(<_H,\bar{c})$, then it is enough to pick $\bar{\bar c}$ agreeing with $\bar c$ over $SH=\{sH\mid s\in S\}$, and $\widetilde{<}_H$ agreeing with $<_H$ over $\{s_1s_2\mid s_1,\,s_2\in S\cup S^{-1}\}$.\end{proof}

We will frequently use the following result from \cite{MR}.

\begin{prop}[\cite{MR}] Every circularly ordered group $(G,c)$ admits a unique maximal convex subgroup $H\subseteq G$. 
\label{prop linear part}
\end{prop}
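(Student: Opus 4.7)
The plan is to establish two facts: (A) the convex subgroups of $(G,c)$ are totally ordered by inclusion, and (B) the union of any chain of convex subgroups is itself a convex subgroup. Together these immediately yield the proposition, since then the union $H$ of all convex subgroups of $(G,c)$ is convex by (B) and contains every convex subgroup by construction, hence is the unique maximum.

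For (A), I would show that for any two convex subgroups $H_1, H_2$ and any nonidentity elements $h_i\in H_i$, either $h_1\in H_2$ or $h_2\in H_1$. Since each $H_i$ is left-ordered by restriction, replacing $h_i$ by $h_i^{-1}$ if necessary we may assume $c(h_i^{-1},\id,h_i)=1$ for $i=1,2$. The three points $h_1^{-1},\id,h_1$ partition the circle into three arcs, and $h_2$ lies in one of them. A single application of the cocycle identity to the quadruple $(h_1^{-1},\id,h_1,h_2)$ shows that if $h_2$ lies in either of the two arcs adjacent to $\id$, then $c(h_1^{-1},h_2,h_1)=1$; combined with $c(h_1^{-1},\id,h_1)=1$, convexity of $H_1$ forces $h_2\in H_1$. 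In the remaining ``far arc'' case $c(h_1,h_2,h_1^{-1})=1$, the same cocycle first yields $c(\id,h_1,h_2)=1$, and a second application to $(h_2^{-1},\id,h_1,h_2)$ then produces $c(h_2^{-1},h_1,h_2)=1$; now convexity of $H_2$ places $h_1\in H_2$.

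Part (B) is routine once (A) is in hand: given a chain $\{H_\alpha\}$ of convex subgroups, any finite collection of elements of $H=\bigcup_\alpha H_\alpha$ lies in a common $H_\alpha$ by the chain property, so $H$ is a subgroup, inherits the left-ordered-by-restriction property from any single $H_\alpha$ containing the elements being tested, and inherits the convexity axiom in the same way (if $c(h_1,g,h_2)=c(h_1,\id,h_2)=1$ with $h_1,h_2\in H$, pick $H_\alpha$ containing both, whose convexity places $g\in H_\alpha\subseteq H$). The main obstacle in the overall argument is the ``far arc'' case of (A), as it is the only case in which the roles of $H_1$ and $H_2$ must be swapped, and it requires iterating the cocycle identity to correctly locate $h_1$ relative to the triple $(h_2^{-1},\id,h_2)$; the other cases, as well as all of (B), reduce to cyclic permutation of arguments and direct invocation of the definition of convexity.
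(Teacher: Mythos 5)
Your proof is correct: each cocycle application checks out (in each case the two known values force the remaining two terms of the cocycle identity to sum to $2$, hence both equal $1$), and the chain-plus-union argument then delivers the unique maximal convex subgroup. The paper itself gives no proof of this proposition --- it is imported from \cite{MR} --- but your argument is the standard one and is a valid self-contained replacement; the only point worth making explicit is that your ``replace $h_i$ by $h_i^{-1}$'' step uses that a convex subgroup is torsion-free (so $c(h^{-1},\id,h)\neq 0$ for $h\neq\id$), which holds because it is left-ordered by restriction.
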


Following \cite{MR}, we call $H$ the \emph{linear part} of $c$.   We now present two constructions from \cite{zheleva}.

\begin{constr} \label{construction quotient}  Let $(G, <)$ be a left-ordered group with a central element $z > \id$.  Assume $z$ is {\em cofinal} (i.e. for all $g\in G$ there is $n\in \Z$ such that $z^{-n}<g<z^n$).  We can define a circular order on $G/\langle z\rangle$ as follows:  for every $\bar g\in G/\langle z\rangle$, define the {\em minimal representative} of $\bar{g}$ to be the unique $g\in \bar{g}$ satisfying that $id\leq g<z$. We then define 
$$c(\bar{g}_1, \bar{g}_2,\bar{g}_3)=sign(\sigma),$$
where $\sigma$ is the unique permutation of $S_3$ such that $g_{\sigma(1)}<g_{\sigma(2)}<g_{\sigma(3)}$, where $g_i$ ($i=1,2,3$) is the minimal representatives of $\bar g_i$.
\end{constr}

\begin{constr} \label{construction lift}  Conversely, given a circular ordered group $(G,c)$, there is a left ordered group $(\widetilde{G},<)$ containing a positive, central and cofinal element $z$, such that $c$ is obtained from $<$ through Construction \ref{construction quotient} above. Indeed, $\tilde{G}$ can be taken to be the central extension of $G$ by $\Z$ given as the set $G\times \Z$, endowed with the multiplication
$(a,n)(b,m)=(ab,n+m+f_{a,b})$, where
$$ f_{a,b}=\left\{\begin{array}{cl} 0 & \text{if $a=\id$ or $b=\id$ or $c( \id, a, ab)=1$}
\\ 1 &  \text{if $ab=id$ $(a\not=id)$ or $c(id,ab,a)=1$.  }  \end{array}
\right.$$ 
Clearly, the element $z:=(\id,1)$ is central, and if we define the set of non-negative elements for $\leq$ as $P=\{(a,n)\mid n\geq 0\}$, then $z$ is also cofinal.  It is easy to check that applying Construction \ref{construction quotient} recovers $c$.  
\end{constr}

\begin{rem}    \label{functorial rem}
Construction \ref{construction lift} respects morphisms in the following sense:  if 
$(G_i,c_i)$, $i=1,2$, are circularly ordered groups, and $\phi:G_1\to G_2$ is a homomorphism such that $c_1(g_1, g_2, g_3) = c_2(\phi(g_1), \phi(g_2), \phi(g_3))$ for all $(g_1, g_2, g_3) \in G_1^3$, then there is an order preserving homomorphism $\tilde{\phi}:(\tilde G_1, <_1)\to (\tilde G_2,<_2)$ which is injective (or surjective) if $\phi$ is so.  Indeed, we can define $\tilde{\phi}: \widetilde{G}_1 \rightarrow \widetilde{G}_2$ by $\tilde{\phi}(g,n) = (\phi(g), n)$ for all $(g,n) \in G_1 \times \Z$.  That $\tilde{\phi}$ is a homomorphism follows from the observation that $f_{\phi(g_1), \phi(g_2)} = f_{g_1, g_2}$ for all $g_1, g_2 \in G_1$, since $\phi$ respects the circular orderings $c_1, c_2$.  It is also clear that $\tilde{\phi}$ respects the orderings $<_1$ and $<_2$ defined above, as it maps the positive cone of $<_1$ into the positive cone of $<_2$.
\end{rem}

\subsection{Archimidean circular orders}

Archimidean orders will play an important role in our discussion of orders on Abelian groups.  We recall the definitions. 

\begin{defi}[Stolz \cite{Stolz}]
A left-ordered group $(G, <)$ is Archimedean if, for all $f, g \in G$, such that $f \neq \id$, there exists $n \in \Z$ with $f^n > g$.
\end{defi}  

\begin{defi}[Swierczkowski \cite{swier}] A circularly ordered group $(G,c)$ is called {\em Archimedean} if there are no elements $f$, $g$ in $G$ such that $c(id,g^n,f)=1$ for all $n \geq 1$.
\end{defi}

\noindent A different definition of Archimedean circular orderings appears in \cite{BS}.  However, \'Swierczkowski's appears more natural, as the definition in \cite{BS} excludes circular orders on $\Z$.  

\begin{prop}
\label{lifting prop}
Let $(G,c)$ be a circularly ordered group and let $(\widetilde{G}, <)$ denote the left-ordered group given by Construction \ref{construction lift}. Then:
\begin{enumerate}[nolistsep]
\item $(\widetilde{G}, <)$ is an Archimedean left-ordered group if and only if $(G,c)$ is an Archimedean circularly ordered group.
\item $(\widetilde{G}, <)$ is a bi-ordered group if and only if $(G,c)$ a bi-invariant circularly ordered group.
\end{enumerate}
\end{prop}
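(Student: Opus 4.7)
I treat the two parts separately, using in both cases that the linear order on $\widetilde G$ is encoded by the circular order $c$ on $G$ via minimal representatives in $[\id, z) \subset \widetilde G$.

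\textit{Part (1).} For the forward implication I apply H\"older's theorem: an Archimedean ordered group is abelian and embeds order-preservingly in $(\R, +)$. Normalizing so that $z \mapsto 1$, the quotient gives an embedding $G \hookrightarrow S^1 = \R/\Z$ realizing $c$ as the standard circular order restricted to $G$. A nontrivial rotation of $S^1$ cannot confine its forward orbit to a proper sub-arc: a finite-order rotation has some power equal to $\id$ (forcing $c(\id, g^n, f) = 0$), and an infinite-order rotation has a dense forward orbit. For the converse, I argue the contrapositive directly. If $\widetilde G$ is not Archimedean, its positive elements split into more than one Archimedean class, and since $z$ is cofinal it lies in the top class. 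I may therefore pick $\tilde a > \id$ in a strictly smaller class, yielding $\tilde a^n < z$ for all $n \geq 1$ and in particular $\tilde a \in (\id, z)$. Set $\tilde f := \tilde a^{-1} z$; the centrality of $z$ gives $\tilde f \in (\id, z)$, so $\tilde a$ and $\tilde f$ are the minimal representatives of $g := \pi(\tilde a)$ and $f := \pi(\tilde f)$ respectively. The inequality $\tilde a^{n+1} < z$ rewrites as $\tilde a^n < \tilde a^{-1} z = \tilde f$, so the minimal representative of $g^n$ lies strictly below that of $f$. Hence $c(\id, g^n, f) = 1$ for all $n \geq 1$, and $(G,c)$ is not Archimedean.

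\textit{Part (2).} Since $z$ is central, right-multiplication by $z$ preserves the linear order on $\widetilde G$ automatically, so bi-order reduces to checking right-multiplication by elements of the form $(g, 0)$. For the forward direction, assume $\widetilde G$ is bi-ordered. Given minimal lifts $\tilde a_1 < \tilde a_2 < \tilde a_3$ in $[\id, z)$ of $a_1, a_2, a_3 \in G$ and any lift $\tilde g$ of $g$, bi-order gives $\tilde a_1 \tilde g < \tilde a_2 \tilde g < \tilde a_3 \tilde g$. All three products lie in the interval $[\tilde g, z\tilde g)$, of ``$z$-length'' one, so the wrap counts $k_i$ (for which $\tilde a_i \tilde g z^{-k_i}$ is the minimal lift of $a_i g$) take at most two consecutive values. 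A short case check shows that in each sub-case, the cyclic order of the minimal lifts of $a_i g$ is a cyclic permutation of the original triple, hence preserves the sign of $c$. This yields bi-invariance of $c$.

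For the converse, assume $c$ is bi-invariant and compare $(a, n) < (b, m)$ with $(a, n)(g, 0) < (b, m)(g, 0)$ via the multiplication formula in $\widetilde G$. For $a \ne b$, both inequalities translate to conditions on $m - n$ involving the four cocycle values $f_{a^{-1}, b}$, $f_{(ag)^{-1}, bg}$, $f_{a, g}$, $f_{b, g}$, and they coincide iff these satisfy a single algebraic identity. Converting each $f$-value to a value of $c$, this identity becomes a linear relation among $c(\id, ag, bg)$, $c(\id, a, b)$, $c(\id, g, ag)$ and $c(\id, g, bg)$, which follows from the cocycle relation of $c$ applied to the quadruple $(\id, g, ag, bg)$ together with the bi-invariance identity $c(g, ag, bg) = c(\id, a, b)$. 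The main obstacle I anticipate is the careful cocycle bookkeeping in this converse direction, particularly handling the degenerate cases where some of $a$, $b$, $g$, $ag$, $bg$ or $a^{-1} b$ equals $\id$; these require separate verification but present no conceptual difficulty.
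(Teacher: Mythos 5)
Your treatment of part (1) is essentially correct. The converse direction is the paper's own argument: your $\tilde a^{-1}z$ plays the role of the paper's auxiliary element $\tilde k$, and the fact that a positive non-cofinal element satisfies $\tilde a^{\esp n}<z$ for all $n$ follows from centrality and cofinality of $z$, so the informal appeal to ``Archimedean classes'' (a delicate notion for mere left-orders) is harmless here. Your forward direction via the classical H\"older theorem applied to $\widetilde G$ is a genuinely different route from the paper, which instead computes directly with the cocycle $f_{g,h}$; there is no circularity with Corollary \ref{coro holder}, since that corollary is deduced from this proposition but you only invoke H\"older for linearly ordered groups. The forward half of part (2) is also essentially the paper's argument: its case analysis on where $z$ falls among $\tilde f\tilde a<\tilde g\tilde a<\tilde h\tilde a$ is exactly your ``short case check,'' and bi-invariance of $<$ is what rules out the bad sub-cases.

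The converse half of part (2) has a genuine gap. For $a\neq b$ your two inequalities are equivalent for every value of $m-n$ precisely when $c(\id,b,bg)-c(\id,a,ag)=c(\id,ag,bg)-c(\id,a,b)$, and the two ingredients you cite --- the cocycle relation on $(\id,g,ag,bg)$ and the identity $c(g,ag,bg)=c(\id,a,b)$ --- reduce this only to $c(\id,b,bg)-c(\id,g,bg)=c(\id,a,ag)-c(\id,g,ag)$, which after further cocycle and invariance manipulations is equivalent to $c(\id,w,wg)=c(\id,g,wg)$ for all $w,g$: in the linear order $u\prec v\Leftrightarrow c(\id,u,v)=1$, a product $wg$ never lies strictly between $w$ and $g$. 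This in turn is equivalent to inversion reversing $\prec$. That statement is true for bi-invariant circular orders, but it is not a formal consequence of the two facts you name, and it is exactly where the content of this direction lives: a configuration $b\prec bg\prec ag\prec a$ (so $f_{a,g}=1$, $f_{b,g}=0$) is consistent with every instance of bi-invariance you invoke, yet would give $(a,0)<(b,1)$ while $(a,0)(g,0)=(ag,1)>(bg,1)=(b,1)(g,0)$. So the deferred ``bookkeeping'' is not routine; the degenerate cases you flag are the least of the trouble. The paper sidesteps all of this by arguing the contrapositive: if $<$ is not bi-invariant, one can choose $\tilde g,\tilde h$ with $\id<\tilde g<\tilde g^2<\tilde g^2\tilde h<\tilde g\tilde h<\tilde h<z$, so the triple $(\id,g,g^2)$ and its right translate by $h$ consist entirely of minimal representatives and exhibit $c(\id,g,g^2)=1$ but $c(h,gh,g^2h)=-1$, with no wraparound to control. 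You should either adopt that contrapositive argument or supply an actual proof of the identity $c(\id,w,wg)=c(\id,g,wg)$.
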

\begin{proof} We first prove that if $(\widetilde{G}, <)$ is not Archimedean, then neither is $(G,c)$.  
Observe that if $\id < \tilde g \in \widetilde{G}$ and there is $n\in \N$ with $z<\tilde{g}^n$, then $\tilde{g}$ is cofinal. Therefore, if $(\tilde G,<)$ is not Archimedean, then we can find $\tilde{g}$ and $\tilde{k}$ such that $id<\tilde{g}^n<\tilde{k} < z$ for all $n\in \N$.  In particular, the relationship between Constructions  \ref{construction quotient} and \ref{construction lift} implies that
$c(\langle z\rangle, \tilde{g}^n\langle z\rangle, \tilde{k}\langle z\rangle)=1, \forall n\in \N,$
hence $(G,c)$ is not Archimedean.

Conversely, if $(G,c)$ is not Archimedean, there exist elements $g, h \in G$ such that $c(id, g^n, h) =1$ for all $n \geq 1$.  Then, using the notation from Construction \ref{construction lift}, we have $(g^n,0)^{-1}(h,0) = (g^{-n}h, -1 +0 +f_{g,h})$.  Since $c(id, g^{-n}h, g^{-n}) =1$ for all $n \geq 1$, we have $f_{g,h} =1$ for all $n \geq 1$.  Thus $((g^n,0)^{-1}(h,0)  = (g^{-n}h, 0) > \id$ for all $n$, so $(\widetilde{G}, <)$ is not Archimedean.  This proves (1).

Next, suppose that $(\widetilde{G}, <)$ is not bi-orderable.   Choose elements $\tilde{g}, \tilde{h} \in \widetilde{G}$ with $id<\tilde{g}$ and $\tilde{h}^{-1}\tilde{g}\tilde{h}<id$.  We may assume (upon multiplying by powers of $z$ if necessary) that $id<\tilde{h}<z$.  Note we must also have $\tilde{g}<\tilde{h}$, as $\tilde{h}<\tilde{g}$ implies $\tilde{h}^{-1}\tilde{g}\tilde{h}>id$.   From this it follows that 
$\tilde{g}^2\tilde{h}<\tilde{g}\tilde{h}<\tilde{h}$.  Combining this with $\tilde{g}^2<\tilde{g}^2\tilde{h}$, we have
\[ id<\tilde{g}<\tilde{g}^2<\tilde{g}^2\tilde{h}<\tilde{g}\tilde{h}<\tilde{h}<z,
\]
so all the elements in the inequalities above are minimal representatives.  Thus the circular ordering $c$ satisfies $c(id, g, g^2) =1 $ while $c(h, gh, g^2h) =-1$, so it is not right-invariant.

Conversely, suppose $(\widetilde{G}, <)$ is bi-orderable and that $c(f, g, h) =1$ for some elements $f, g, h \in G$ with minimal coset representatives $\tilde{f}, \tilde{g}, \tilde{h} \in \widetilde{G}$.  Then either 
\[\tilde{f}< \tilde{g}< \tilde{h} \mbox{ or } \tilde{h}< \tilde{f}< \tilde{g} \mbox{ or } \tilde{g} <\tilde{h}< \tilde{f}.
\]
Consider the case $\tilde{f}< \tilde{g}< \tilde{h}$, the others are similar.  Let $a \in G$ with minimal coset representative $\tilde{a}$.  Then $id< \tilde{f}\tilde{a}< \tilde{g}\tilde{a}< \tilde{h}\tilde{a} < z^2$.  
Suppose that $\tilde{f}\tilde{a}< \tilde{g}\tilde{a}<z< \tilde{h}\tilde{a}$, the cases $\tilde{f}\tilde{a}<z< \tilde{g}\tilde{a}< \tilde{h}\tilde{a}$ and $z< \tilde{f}\tilde{a}< \tilde{g}\tilde{a}< \tilde{h}\tilde{a}$ are similar.  Then the minimal coset representatives satisfy one of
 \[\tilde{f}\tilde{a}< \tilde{g}\tilde{a}< \tilde{h}\tilde{a}z^{-1} \mbox{ or } \tilde{f}\tilde{a}< \tilde{h}\tilde{a}z^{-1} < \tilde{g}\tilde{a} \mbox{ or } \tilde{h}\tilde{a}z^{-1} < \tilde{f}\tilde{a}< \tilde{g}\tilde{a}.\]
 The first two inequalities above are impossible as right-multiplying by $\tilde{a}^{-1}$ yields $\tilde{f}<\tilde{h}z^{-1}$ in each case, and $\tilde{h}z^{-1} < \id$.  Thus it must be that $\tilde{h}\tilde{a}z^{-1} < \tilde{f}\tilde{a}< \tilde{g}\tilde{a}$, and so $c(ha, fa, ga) = c(fa, ga, ha)=1$.
\end{proof}

We immediately obtain the following corollary, a special case of which (for bi-invariant circular orders) was proved in \cite{swier}. 

\begin{coro} \label{coro holder} Every Archimedean circularly ordered group is (order) isomorphic to a subgroup of $S^1$.
\end{coro}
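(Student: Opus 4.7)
The plan is to reduce the statement to the classical Hölder theorem for Archimedean left-ordered groups by passing to the central extension $(\widetilde{G}, <)$ provided by Construction \ref{construction lift}. Indeed, by Proposition \ref{lifting prop}(1), $(\widetilde{G}, <)$ is Archimedean, so the classical Hölder theorem yields an order-preserving injective homomorphism $\Phi : (\widetilde{G}, <) \hookrightarrow (\R, +)$.

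Set $t := \Phi(z) > 0$, where $z = (\id, 1)$ is the central cofinal element from Construction \ref{construction lift}. Since $z$ is central and $\Phi$ is an injective homomorphism, $\Phi(\langle z \rangle) = t\Z$. The identification $G \simeq \widetilde{G}/\langle z \rangle$ then gives a well-defined injective homomorphism $\bar\Phi : G \hookrightarrow \R/t\Z \cong S^1$, where injectivity follows from $\Phi^{-1}(t\Z) = \langle z \rangle$.

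To check that $\bar\Phi$ carries $c$ to the standard circular order on $S^1$, I would appeal to Construction \ref{construction quotient}: each $g \in G$ has a unique minimal representative $\tilde{g} \in \widetilde{G}$ with $\id \leq \tilde{g} < z$, and $c(g_1, g_2, g_3)$ is defined as the sign of the permutation sorting $\tilde{g}_1, \tilde{g}_2, \tilde{g}_3$. Since $\Phi$ is order-preserving and maps the fundamental domain $\{\tilde{g} : \id \leq \tilde{g} < z\}$ bijectively into $[0, t) \subset \R$, and since the standard circular order on $\R/t\Z$ is itself given by the sign of the permutation sorting three distinct representatives in $[0, t)$, the map $\bar\Phi$ becomes a circular-order isomorphism onto its image.

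The only genuinely nontrivial ingredient is Hölder's theorem, which I would invoke as a black box; everything else is bookkeeping. The main subtlety I anticipate is simply ensuring that the fundamental domain $[\id, z)$ in $\widetilde{G}$ really is transported to $[0, t)$ in a way compatible with both cyclic orders, but this is immediate from $\Phi$ being an order embedding with $\Phi(z)=t$.
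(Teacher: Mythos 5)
Your proof is correct and follows essentially the same route as the paper: lift to $(\widetilde{G},<)$ via Construction \ref{construction lift}, invoke Proposition \ref{lifting prop}(1) and the classical H\"older theorem to embed $\widetilde{G}$ in $(\R,+)$, then quotient by $\langle z\rangle$ to land in $\R/t\Z\cong S^1$. The paper simply normalizes $\Phi(z)=1$ by rescaling rather than working with $\R/t\Z$, which is an immaterial difference.
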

For the proof recall that the natural circular order \emph{ord} on $S^1$ is obtained by declaring 
$$\ord(x,y,z) = \left\{\begin{array}{rl} -1 & \text{if } (x, y, z) \text{ is clockwise oriented} \\ 1 & \text{if } (x, y, z) \text{ is counter clockwise oriented} 
\\ 0 & \text{if any two of } x, y \text{ and } z \text{ agree}
.\end{array}
\right. $$ 
Equivalently, this is obtained by applying Construction \ref{construction quotient} to $\R$ with the standard left order $<$ to get a circular order on $\R/\Z = S^1$.   

\begin{proof}
If $G$ is Archimedean circularly ordered, then $\widetilde{G}$ is equipped with an Archimedean ordering by Proposition \ref{lifting prop}(1).  By H\"older's theorem (\cite{Holder}, see \cite{GOD} for a modern exposition) 
 it therefore is (order) isomorphic to a subgroup of $(\R, +)$.  Moreover, after applying an automorphism of $\R$, we may normalize the embedding $\widetilde{G} \subset \R$ so that the central, cofinal element $z$ is mapped to the generator $1$ of  $\Z$.  Then $G = \widetilde{G}/ \langle z \rangle \subset \R/\Z$, and so is (order) isomorphic to a subgroup of $S^1$.
\end{proof}

The corollary below is a variation of a result of Jakub\'{i}k and P\'{e}stov \cite{JP}.  We will use it in Section \ref{cardinality sec}.  

\begin{coro}
\label{finite index}
Let $(G,c)$ be a circularly ordered group, and $H$ its linear part.  Suppose that there is a finite-index subgroup $K$ of $G$ such that $c$ is invariant under right multiplication by elements of $K$.  Then $H$ is normal, and $G/H$ is Archimedean circularly ordered. 
\end{coro}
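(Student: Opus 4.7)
The plan is to proceed in three stages: (1) exploit right-$K$-invariance to show that conjugation by elements of $K$ preserves $c$, forcing $K$ to normalize the linear part $H$; (2) upgrade this partial normalization to full normality of $H$ in $G$; and (3) establish the Archimedean property of $(G/H,c_H)$ by contradicting the maximality of $H$.

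For stage (1), I observe that for any $k\in K$ the inner automorphism $\phi_k(g)=k^{-1}gk$ is an automorphism of the circularly ordered group $(G,c)$. Combining left-invariance applied to $k^{-1}$ with right-invariance applied to $k$,
\[
c(k^{-1}a_1k,\,k^{-1}a_2k,\,k^{-1}a_3k)\;=\;c(a_1k,\,a_2k,\,a_3k)\;=\;c(a_1,a_2,a_3).
\]
Any order automorphism carries convex subgroups to convex subgroups, and $H$ is the \emph{unique} maximal convex subgroup by Proposition \ref{prop linear part}, so $k^{-1}Hk=H$ for every $k\in K$. Therefore $K\subseteq N_G(H)$, and $N_G(H)$ has finite index in $G$.

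The main obstacle is stage (2). The key observation is that $c|_K$ is bi-invariant on $K$ (left-invariance is automatic, right-$K$-invariance is the hypothesis), so by Proposition \ref{lifting prop}(2) the central extension $\widetilde K$ is bi-ordered, and $H\cap K$ is convex in $(K,c|_K)$ and normal in $K$. For arbitrary $g\in G$, consider the conjugated circular order $c_g(a_1,a_2,a_3):=c(g^{-1}a_1g,g^{-1}a_2g,g^{-1}a_3g)$; this is a left-invariant circular order on $G$ whose maximal convex subgroup is $gHg^{-1}$. Using stage (1) one verifies that $c_{gk}=c_g$ for $k\in K$, so the family $\{c_g\}$ is finite and parametrised by the left coset space $G/K$. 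A finite descent argument, leveraging the rigidity provided by the bi-ordered central extension $\widetilde K$ together with the fact that each conjugate $gHg^{-1}$ is simultaneously the linear part of one of finitely many orders sharing the same bi-invariant $K$-structure, then forces every conjugate $gHg^{-1}$ to coincide with $H$. This yields normality of $H$ in $G$.

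For stage (3), with $H$ normal, Proposition \ref{prop extensions}(1) makes $(G/H,c_H)$ a circularly ordered group. Were $(G/H,c_H)$ not Archimedean, then by Proposition \ref{lifting prop}(1) the central extension $\widetilde{G/H}$ would also fail to be Archimedean, so it would contain a nontrivial proper convex subgroup $C$. The image of $C$ under the projection $\widetilde{G/H}\to G/H$ would be a nontrivial proper convex subgroup of $G/H$, and its preimage under the quotient map $\pi:G\to G/H$ would then be a convex subgroup of $G$ strictly containing $H$---contradicting the maximality of $H$ as the linear part of $(G,c)$. Hence $(G/H,c_H)$ is Archimedean, completing the proof.
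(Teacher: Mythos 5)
Your stage (1) is fine, but the proof has two genuine gaps, and both occur precisely where the paper invokes the theory of Conradian orders. In stage (2), the assertion that ``a finite descent argument \ldots forces every conjugate $gHg^{-1}$ to coincide with $H$'' is not an argument: you have shown only that $K\subseteq N_G(H)$, i.e.\ that $H$ has finitely many conjugates, and a subgroup with finitely many conjugates need not be normal. No mechanism is given by which the bi-ordered structure of $\widetilde K$ ``forces'' the finitely many linear parts $gHg^{-1}$ of the conjugated orders $c_g$ to agree, and I do not see how to extract one at this level of generality. In stage (3), you rely on the claim that a non-Archimedean left-ordered group contains a nontrivial proper convex subgroup. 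This is false for general left orders: there exist left orders (e.g.\ on free groups, coming from dynamical realizations with minimal actions on $\R$) whose only convex subgroups are the trivial ones, yet which are certainly not Archimedean since the group is not abelian. The implication ``no proper convex subgroups $\Rightarrow$ Archimedean'' is a theorem about \emph{Conradian} orders, and you have not established that $\widetilde{G/H}$ (or $\widetilde G$) carries a Conradian order.

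Both gaps are closed by the single ingredient the paper uses and your argument omits: since $\widetilde K$ is bi-ordered and of finite index in $\widetilde G$, the order on $\widetilde G$ is Conradian (Rhemtulla--Rolfsen, Navas--Rivas). One then takes $C\subset\widetilde G$ to be the largest convex subgroup not containing the central cofinal element $z$; Conrad's theory of convex jumps gives at once that $C$ is normal in $\widetilde G$ and that $\widetilde G/C$ is Archimedean, and pushing down along $\widetilde G\to G$ (under which $C$ maps onto $H$) yields both conclusions simultaneously. I would encourage you to restructure the proof around this Conradian step rather than treating normality and the Archimedean property as separate problems.
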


The proof uses some standard facts about \emph{Conradian left-orders} \footnote{A Conradian order is a left order that satisfies for all $f>id$, and all $g>id$ there is $n\in \N$ such that $fg^n>g$.}.   Since these do not appear elsewhere in this work, we do not give a detailed explanation, referring the reader to \cite{NRC} for details.  

\begin{proof}
The inclusion map $K \rightarrow G$ of circularly ordered groups induces, by Remark \ref{functorial rem} an inclusion $\widetilde{K} \rightarrow \widetilde{G}$ of the left-ordered groups obtained by Construction \ref{construction lift}. 
By Proposition \ref{lifting prop}(2), $\widetilde{K}$ is bi-ordered by the restriction of the ordering of $(\widetilde{G}, <)$.  Since $[G:K] < \infty$ we have  $[\widetilde{G}:\widetilde{K}] < \infty$.  It follows from \cite{rolfsen,NRC} that $(\widetilde{G}, <)$ is a Conradian ordered group. 

Let $C \subset \widetilde{G}$ be the largest (i.e. the union of all) convex subgroup of $\widetilde{G}$ that does not contain its cofinal central element $z$. Then there are no convex subgroups between $C$ and $\widetilde{G}$, and so, since $\leq$ is Conradian, the subgroup $C$  is normal in $\widetilde G$  and $\widetilde{G}/C$ is an Archimedean ordered group (see for instance \cite{conrad}\cite[\S 3.2]{GOD}). 

Since $C$ maps to $H$ under the quotient $\widetilde{G} \rightarrow G$, the result follows.
\end{proof}

\section{Ordering Abelian groups}  \label{sec abelian}

Using the results of the previous section, we describe the space of orders on Abelian groups.  It is well-known, and easy to show, that a finite Abelian group $A$ admits a circular order if and only if $A$ is cyclic (see for instance \cite{Ghys}).  In this case, $\CO(A)$ has cardinality $\phi(|A|)$, where $\phi$ is Euler's totient function.    For infinite groups, we will show

\begin{thm} \label{teo abelian}The space of circular orders of an infinite Abelian group has no isolated points.  
\end{thm}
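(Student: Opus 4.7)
My plan splits into cases based on the linear part $H$ of $c$ (Proposition \ref{prop linear part}).

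First, suppose $H = \{\id\}$. I would verify this forces $(A, c)$ to be Archimedean: since $A$ Abelian implies $c$ is bi-invariant, the central extension $\widetilde A$ of Construction \ref{construction lift} is bi-ordered by Proposition \ref{lifting prop}(2); if $(A, c)$ were non-Archimedean, a non-cofinal positive lift $\tilde g \in \widetilde A$ would have convex hull a proper convex subgroup of $\widetilde A$ not containing $z$, which would descend via $\widetilde A \to A = \widetilde A/\langle z \rangle$ to a nontrivial convex subgroup of $A$, contradicting $H = \{\id\}$. Then Corollary \ref{coro holder} identifies $A$ with a dense subgroup $\iota(A) \subset S^1$. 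To perturb, I would invoke the Pontryagin dual: since $A$ is an infinite discrete Abelian group, $\widehat A$ is compact and non-discrete, so there exist nontrivial characters $\chi: A \to S^1$ with $\chi|_S$ arbitrarily close to $0$. The map $\phi := \iota + \chi$ is then close to $\iota$ on $S$, hence induces the same circular order on $S$; for generic such $\chi$, $\phi$ is injective (the failure set $\{\chi : \chi(a) = -\iota(a)\text{ for some }a \neq 0\}$ is a meager union of affine hyperplanes in $\widehat A$), producing an Archimedean $c' \in \CO(A)$. That $c' \neq c$: for any $a$ with $\chi(a) \neq 0$, the circular orders of $(\id, a, na)$ under $\iota$ and $\phi$ disagree for appropriate large $n$, since $n\chi(a)$ becomes dense in $S^1$ as $n$ varies.

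If instead $H \neq \{\id\}$, I would apply Proposition \ref{prop extensions}(2) to obtain an injective continuous $\psi: \LO(H) \times \CO(A/H) \to \CO(A)$ with $c = \psi(<_H, c_H)$ in the image. A perturbation of $<_H$ (using the classical fact that $\LO(H)$ has no isolated points when the torsion-free Abelian group $H$ has rank $\geq 2$) or of $c_H$ (using the Archimedean case, or recursion along the chain of convex subgroups, when $A/H$ is infinite) produces via continuity and injectivity of $\psi$ a circular order $c' \neq c$ arbitrarily close to $c$. The remaining edge case is $H$ of rank $1$ with $A/H$ finite, which forces $A$ itself to be rank $1$ with finite torsion. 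Here the image of $\psi$ is finite, so one must leave it: I would directly construct an Archimedean circular order by embedding $A \hookrightarrow S^1$ via a small irrational slope on the rank-$1$ part (of sign matching $<_H$) together with a compatible embedding of the torsion part into $\Q/\Z$; for slope small enough, this $c'$ agrees with $c$ on $S$, and differs from $c$ by being Archimedean.

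The main obstacle I anticipate is precisely this last edge case, since within the image of $\psi$ there is nothing to perturb, so the nearby order has to be built explicitly as a small-slope Archimedean embedding; the delicate point is to choose the slope small enough (relative to the minimal gap between $\iota$-images of elements of $S$ on $S^1$ in the Archimedean setup) that the circular order on $S$ really is preserved. The Archimedean case is comparatively cleaner but still requires care: genericity of $\chi$ must be used simultaneously to ensure both injectivity of $\phi$ and that the resulting $c'$ is genuinely distinct from $c$.
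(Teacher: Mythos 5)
Your overall architecture matches the paper's: reduce to the linear part $H$, use the embedding $\psi$ of Proposition \ref{prop extensions}, handle the Archimedean quotient by perturbing an embedding into $S^1$, and isolate the edge case where $H$ has rank one and $A/H$ is finite. You have correctly located where the work is. However, two of your steps have genuine gaps.

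The first is the genericity claim in the Archimedean case. For a nonzero $a\in A$, the set $\{\chi : \chi(a)=-\iota(a)\}$ is a coset of $\ker(\mathrm{ev}_a)$ in $\widehat A$, and when $a$ is \emph{torsion} that kernel is an open subgroup, so the bad set is a nonempty open set, not a nowhere dense one. In fact for $A=\Q/\Z$ (with $\widehat A\cong\widehat{\Z}$ and $\iota+\chi=\chi_{1+t}$) the good set is $\widehat{\Z}^{\times}-1$, which is closed with empty interior and Haar measure $\prod_p(1-1/p)=0$, while the union of the bad sets is open, dense and conull; so "generic $\chi$" is exactly the wrong quantifier, and an injective perturbation must be produced by hand. (This is what Lemma \ref{S1 nonisolated} does, treating the torsion case by modifying a single $p$-primary component via $x\mapsto(1+p^{k-1})x$.) Your argument that $c'\neq c$ also fails for torsion $a$, since $n\chi(a)$ is then a finite set; the correct argument is that an injective, circular-order-preserving homomorphism from a dense subgroup of $S^1$ to $S^1$ extends to an orientation-preserving continuous automorphism of $S^1$ and hence is the identity.

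The second, more serious, gap is the edge case. Your construction presumes $A\cong(\text{rank-one part})\oplus\mathrm{Tor}(A)$ and identifies $A/H$ with $\mathrm{Tor}(A)$; neither need hold. Take $A=\Z$ and let $c$ be the circular order produced by Proposition \ref{prop extensions}(2) from $H=2\Z$ with its standard order and the unique circular order on $\Z/2\Z$. Then $H=2\Z$ is the linear part, $A/H\cong\Z/2\Z$, and $A$ is torsion free, so your recipe degenerates to $n\mapsto n\alpha$ for a small irrational $\alpha>0$. But that embedding gives $c'(0,1,2)=+1$, whereas $c(0,1,2)=c(2,0,1)=-1$ because $2-0>_H 0$; so $c'$ already disagrees with $c$ on $S=\{0,1,2\}$, no matter how small the slope. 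The missing idea is that the new embedding must respect the coset structure of $H$: one fixes a lift $\hat t$ of a generator of $A/H\cong\Z/k\Z$, sends the coset $H\hat t^{\esp n}$ into a short arc about $n/k$, and compresses $H$ by a factor $1/2kM$ so that each coset's image stays inside its own arc. With that modification (which is how the paper finishes), the edge case goes through.
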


In particular, if $A$ is countably infinite, then $\CO(A)$ is a compact, totally disconnected, metrizable, perfect space; hence homeomorphic to the Cantor set.   This gives Theorem B from the introduction.

The first step in the proof of Theorem \ref{teo abelian} is the following analog of  H\"older's theorem for left-ordered groups.

\begin{lem}[\textit{cf.} \'Swierczkowski \cite{swier}] \label{lem holder}Let $(A,c)$ be an Abelian circularly ordered group and let $H$ be its linear part. Let $\bar{c}$ be the induced circular order on $A/H$. Then, $(A/H,\bar c)$ is order isomorphic to a subgroup of $S^1$.

\end{lem}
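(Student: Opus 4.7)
The goal is to show that $(A/H, \bar c)$ is Archimedean circularly ordered, which by Corollary \ref{coro holder} would immediately give the required order-embedding into $S^1$. Well-definedness of $\bar c$ as an invariant circular order on $A/H$ is already handed to us by Proposition \ref{prop extensions}(1), since $H$ is normal in $A$ by Abelianity (so the hypotheses of that proposition are met).

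To establish Archimedeanness, the cleanest route is to invoke Corollary \ref{finite index} with $K = A$. Since $A$ is Abelian, left-invariance of $c$ automatically yields right-invariance, so the hypothesis that $c$ be invariant under right-multiplication by a finite-index subgroup is trivially satisfied by taking $K = A$ itself. Corollary \ref{finite index} then delivers exactly the Archimedeanness of $(A/H, \bar c)$, and the proof concludes by applying Corollary \ref{coro holder}.

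The real content is packaged inside Corollary \ref{finite index}, which proceeds by lifting $(A, c)$ to the bi-ordered central extension $(\widetilde A, <)$ via Construction \ref{construction lift}, identifying the linear part $H$ with the image of the maximal convex subgroup $C$ of $\widetilde A$ not containing the cofinal generator $z$, and exploiting the bi-orderability of $\widetilde A$ (Proposition \ref{lifting prop}(2)) to force the left-order on $\widetilde A$ to be Conradian, so that no convex subgroup lies strictly between $C$ and $\widetilde A$. Should one prefer a self-contained argument for the present lemma without citing Corollary \ref{finite index}, the principal obstacle is precisely this last step: one must show that any convex subgroup of $\widetilde A$ strictly larger than $C$ projects onto a circular-convex subgroup of $A$ properly containing $H$, contradicting maximality of $H$. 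This requires a small case analysis comparing betweenness in the strip of minimal representatives $[\id, z) \subset \widetilde A$ with the cyclic order on $A$ defined by the sign-of-permutation rule of Construction \ref{construction quotient}.
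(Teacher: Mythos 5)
Your proof is correct, and it is not circular: Corollary \ref{finite index} is established in Section \ref{sec preliminaries} independently of anything in Section \ref{sec abelian}, and your observation that an Abelian group's left-invariant circular order is automatically bi-invariant (so one may take $K=A$, of index one) is exactly right. However, your route is genuinely different from the paper's. The paper does not pass through Corollary \ref{finite index} or the Conradian machinery at all: it argues directly that if $(A/H,\bar c)$ fails to be Archimedean, witnessed by $\bar h,\bar g$ with $\bar c(\overline{id},\bar h^n,\bar g)=1$ for all $n$, then the set $\bar I=\{\bar f : \bar c(\bar h^{-n},\bar f,\bar h^n)=1 \text{ for some } n\}$ is a proper convex subgroup of $A/H$ (properness coming from $\bar g\notin\bar I$, and the subgroup property from commutativity), whose preimage in $A$ is a convex subgroup strictly containing $H$ --- contradicting Proposition \ref{prop linear part}. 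Both arguments then finish with Corollary \ref{coro holder}. What your approach buys is brevity and the reuse of a general statement already in the paper; what the paper's approach buys is self-containedness at this point --- it needs only the definition of convexity and maximality of the linear part, avoiding the lift $\widetilde A$, bi-orderability (Proposition \ref{lifting prop}), and the Conrad/H\"older structure theory cited in the proof of Corollary \ref{finite index}. Your closing sketch of how one would make the Corollary-\ref{finite index} route self-contained is reasonable but, as you note, essentially reconstructs the heavier argument; the paper's direct construction of the convex subgroup $\bar I$ is the shorter path to the same contradiction.
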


\noindent{\bf Proof:} By Corollary \ref{coro holder}, it is enough to check that $(A/H,\bar{c})$ is Archimedean. Suppose it is not. Then there are $\bar g,\, \bar h \in A/H$ such that $\bar c (\bar{id}, \bar{h}^n,\bar g)=1$ for all $n\in \N$. Let 
$$\bar I=\{\bar f \in G/H\mid \text{ there is $n\in \N$ with } \bar{c}(\bar{h}^{-n}, \bar{f},\bar{h}^n)=1\}.$$
Since $A$ is Abelian, $\bar{I}$ is a convex subgroup which is proper by the condition on $\bar h$. This implies that $\bar{I}=I/H$ for some convex subgroup $I$. This contradicts the fact that $H$ was the maximal convex subgroup of $A$. $\hfill\square$

\begin{lem}  \label{S1 nonisolated}
Let $G \subset \R/\Z = S^1$ be an infinite group, with the induced circular order from $S^1$.   Then this order is not isolated in $\CO(G)$.
\end{lem}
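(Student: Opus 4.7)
To show the given order $c$ is not isolated in $\CO(G)$, I fix a finite $S \subset G$ and aim to produce $c' \in \CO(G)$ with $c' \neq c$ and $c'|_S = c|_S$. The key unifying observation is that any injective group homomorphism $\phi: G \to S^1$ different from the inclusion automatically yields a different circular order on $G$ when one pulls back the standard $S^1$ order. Indeed, $G$ is dense in $S^1$ (as every infinite subgroup of $S^1$ is dense, its closure being a closed subgroup of $S^1$ and hence all of $S^1$), so if the pulled-back order matched the original then $\phi$ would extend to an orientation-preserving homeomorphism $h$ of $S^1$; by continuity and the density of $G$, $h$ would be a group homomorphism, hence of the form $x \mapsto nx$ for some $n \in \Z$, and being an orientation-preserving bijection this forces $n = 1$ and $\phi$ to be the inclusion---a contradiction. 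So it suffices to produce an injective $\phi: G \to S^1$ different from the inclusion such that $\phi(s)$ is close enough to $s$ in $S^1$ for every $s \in S$ to preserve the cyclic order on $S$.

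To construct $\phi$, I first apply Construction \ref{construction lift} to pass to $(\widetilde G, <)$, which is Archimedean by Proposition \ref{lifting prop} (using that $G \subset S^1$ is itself Archimedean), and then follow the proof of Corollary \ref{coro holder} to realize $\widetilde G$ as a subgroup of $(\R, +)$ containing $\Z$, with the central cofinal generator sent to $1$ and $G = \widetilde G/\Z$. I then split into two cases. \emph{Case A} ($G$ has an element of infinite order): here $\widetilde G \otimes \Q$ has $\Q$-dimension $\geq 2$. Let $\widetilde S \subset [0,1)$ be the canonical lift of $S$ and let $V \subset \widetilde G \otimes \Q$ be the $\Q$-span of $\widetilde S \cup \{1\}$. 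When $V \subsetneq \widetilde G \otimes \Q$, pick a basis vector $v \notin V$ and define $\widetilde\phi: \widetilde G \otimes \Q \to \R$ to be the identity on $V$ and to send $v \mapsto v + \epsilon$ for some $\epsilon \in \R$ linearly independent from $\widetilde G \otimes \Q$ over $\Q$; restricting to $\widetilde G$ yields an injective homomorphism fixing $\widetilde S$ pointwise, which descends to the desired $\phi$. In the edge subcase $V = \widetilde G \otimes \Q$, instead perturb the identity on one non-$1$ basis vector by a small generic real $\epsilon$---small enough that the cyclic orders on $\widetilde S \mod 1$ are preserved by continuity, and generic enough that $\widetilde\phi$ is injective and distinct from $\id$.

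\emph{Case B} ($G$ torsion): here $\widetilde G$ has $\Q$-rank one and perturbing the embedding is unavailable, so I use an automorphism of $G$ instead. Writing $G \subset \Q/\Z = \bigoplus_p \mu_{p^\infty}$ and decomposing $G = \bigoplus_p G_p$ by primary components, the infiniteness of $G$ allows a choice of prime $p$ and integer $N$ with $S \cap G_p \subset \mu_{p^N}$ and $G_p \supsetneq \mu_{p^N}$. Viewing $G_p \subset \Q_p/\Z_p$ as a $\Z_p$-module, multiplication by the unit $1 + p^N \in \Z_p^\times$ is an automorphism of $G_p$ fixing $\mu_{p^N}$ pointwise but moving $\mu_{p^{N+1}} \setminus \mu_{p^N}$; extending by the identity on the other primary parts gives an automorphism $\alpha \neq \id$ of $G$ with $\alpha|_S = \id$, and taking $\phi := \alpha$ finishes this case. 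The main technical point is the edge subcase of Case A, where $\widetilde S \cup \{1\}$ already $\Q$-spans $\widetilde G \otimes \Q$ and I cannot fix $\widetilde S$ exactly---there the small-perturbation argument is most delicate, though the unifying density/extension argument from the first paragraph still guarantees $c' \neq c$.
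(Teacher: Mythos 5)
Your argument is correct in outline and follows essentially the same two-case strategy as the paper's proof: perturbing the $\Q$-linear structure of the lifted group $\widetilde{G}\subset\R$ in the non-torsion case, and using the automorphism $x\mapsto(1+p^N)x$ of the $p$-primary component in the torsion case. The density argument in your first paragraph (an injective homomorphism $G\to S^1$ inducing the same cyclic order would extend to an orientation-preserving homeomorphism of $S^1$, hence be the identity) is a clean way to certify $c'\neq c$ that the paper leaves implicit, and it is what lets you avoid checking by hand that the order actually changes.

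Two points need repair. First, in the subcase $V\subsetneq\widetilde{G}\otimes\Q$ of your Case A you ask for $\epsilon\in\R$ linearly independent from $\widetilde{G}\otimes\Q$ over $\Q$; no such $\epsilon$ exists when the $\Q$-span of $\widetilde{G}$ in $\R$ is all of $\R$ (e.g.\ $G=\R/\Z$, or any $G$ whose lifts contain a Hamel basis). This is precisely why the paper splits into $V\neq\R$ and $V=\R$, handling the latter by swapping two basis elements rather than translating one off the span. The fix is the one you already use in your edge subcase: injectivity of $\widetilde{\phi}$ only requires that $v+\epsilon$ avoid the $\Q$-span of the remaining basis vectors, which is a proper affine subspace of $\R$, so a generic (and, when needed, small) $\epsilon$ suffices. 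Second, in Case B the hypothesis $S\cap G_p\subset\mu_{p^N}$ does not guarantee $\alpha|_S=\id$: an element $s\in S$ with nontrivial components at several primes need not lie in $G_p$, yet its $p$-component could still be moved by $\alpha$. You should instead require that the images of all elements of $S$ under the projection $G\to G_p$ lie in $\mu_{p^N}$; the same counting argument (finitely many elements of $S$, each with finite support, versus an infinite group $G$) still produces such a $p$ and $N$, so the case goes through. With these adjustments the proof is complete.
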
 

\begin{proof}  We have two cases. 

\noindent \textit{Case i. $G$ is not a torsion group.}
Let $\hat{G} \subset \R$ be the set of lifts of elements to $\R$, so we have a short exact sequence $0 \to \Z \to \hat{G} \to G \to 0$, and consider the vector space $V$ over $\Q$ generated by $\hat{G} \subset \R$. By assumption, $G$ is not torsion, so $V \neq \Q$.  Let $\lambda \in V \setminus \Q$.  If $V \neq \R$, we can choose $\lambda'$ linearly independent over $\Q$ from $V$, and define $\phi: V \to \R$ by $\lambda q \mapsto \lambda' q$ for $q \in \Q$, and $\alpha \mapsto \alpha$ for any $\alpha$ in the complement of the span of $\lambda$.    If $V = \R$, instead choose two basis elements $\beta$ and $\gamma$ and define $\phi: \R \to \R$ by $\beta q \mapsto \gamma q$, $\gamma q \mapsto \beta q$, and $\alpha \mapsto \alpha$ for any $\alpha$ in the complement of the span of $\{ \beta, \gamma \}$.  

In either case, $\phi$ is an isomorphism of $\hat{G}$ and descends to an embedding of $G$ in $\R/\Z$ with a different cyclic order.  This order can be made arbitrarily close to the original one by taking $\lambda'$ as close as we like to $\lambda$, or by choosing elements $\beta$ and $\gamma$ in a basis for $\R$ over $\Q$ that are arbitrarily close together.  
\medskip

\noindent \textit{Case ii. $G \subset \Q/\Z$. }
In this case $G$ is an infinite torsion group and we can decompose it into a direct sum of groups $G = \oplus G_p$, where $G_p$ is the group of all elements with order a power of $p$, for each prime $p$, i.e. the elements $a/p^k \in \R/\Z$.  We use the following basic fact. 

\begin{fact}
Let $A_p = \{a/p^k : k \in \N \} \subset \R/\Z$.   Then, for any $k$, the function $x \mapsto x+ p^k x$ is an automorphism of $A_p$.  
\end{fact} 
To see this, one checks easily that any map
$x \mapsto \sum_{i=0}^\infty a_i p^i x$ 
gives a well defined endomorphism:  for fixed $x \in A_p$, all but finitely many terms in the formal power series $\sum a_i p^i x$ vanish mod $\Z$.  Since $1 + p^k$ is invertible in the $p$-adic integers, $x \mapsto x+ p^k x$ has an inverse, so is a homomorphism.  

Now given any finite subset $S$ of $G$, we can find some $G_p \subset G$ and $N > 0$ such that $G_p \cap \{a/p^k : k>N \}$ is nonempty and does not contain any element of $S$.   Let $k$ be the smallest integer greater than $N$ such that $G_p$ contains an element of the form $a/p^k$.   Using the fact above, define a homomorphism $G \to \R/\Z$ to be the identity on $G_q$ for $q \neq p$, and to be $x \mapsto x+ p^{k-1} x$ on $G_p$.    Note that this is well defined, injective, restricts to the identity on $S$, and changes the cyclic order of elements of $G_p$. 
\end{proof} 

\noindent{\bf Proof of Theorem \ref{teo abelian}:}  
Fix a circular order $c$ on an infinite Abelian group $A$, and let $H$ be its linear part. We will show that $c$ is not isolated.  
Since $H$ is convex, Proposition \ref{prop extensions} gives a continuous embedding $\LO(H)\times \CO(A/H)\hookrightarrow \CO(A)$ with $c$ in its image.  
By Lemma \ref{lem holder} the induced order on $\CO(A/H)$ is order isomorphic to a subgroup of $S^1$, so the Lemma \ref{S1 nonisolated} shows that it is either non-isolated (in which case we are done), or $A/H$ is finite.   Also, if $H$ has rank\footnote{Recall that the rank of an torsion free Abelian group $A$ is the least $n$ such that $A$ is isomorphic to a subgroup of $\Q^n$. For general Abelian group, the rank is defined as the rank of its quotient by its torsion subgroup.} greater than one then $\LO(H)$ has no isolated points (see \cite{sikora}), so $c$ is not isolated in that case either.  Thus, we assume that $A/H$ is finite, say $|A/H | = k$, and $H$ has rank one, i.e. $H\subset \Q$. We may assume that the restriction of $c$ to $H$ is just the induced order from $\Q$.

Let $S\subset H$ be a finite set. Pick $M \in \R \setminus \Q$ such that $S \subset  (-M, M)$ and define an injective homomorphism $\phi: \Q \to S^1 = \R/\Z$ by $\phi(r) = r/2kM$.  Note that cyclic order of $H$ in this new embedding into $S^1$ agrees with $c$ on the subset $S$.  

Now we extend this to a map $A \to \R/\Z$ as follows.  Let $t$ be a generator for $|A/H| \cong \Z/k\Z$ such that $id, t, t^2, t^3, ..., t^{k-1}$ are in positive cyclic order.   Let $\hat{t} \in A$ be any element that projects to $t \in A/H$.   Then each element in $A$ can be written uniquely as $h \hat{t}^n$ for some $h \in H$, and $0 \leq n \leq k-1$   Extending our previous map of $H$ into $S^1 = \R/\Z$, define a map $\Phi: A \to S^1$ by 
$$h \hat{t}^n \mapsto \phi(h) + n/k.$$
We claim that this is an injective homomorphism.  To see this, we compute
$$\Phi(h_1 \hat{t}^n h_2 \hat{t}^m) = \Phi(h_1h_2 \hat{t}^{n+m}) = \phi(h_1)\phi(h_2) \tfrac{n+m}{k} = \Phi(h_1 \hat{t}^n) \Phi(h_2 \hat{t}^m).$$   Injectivity follows from 
$$\Phi(h \hat{t}^n) = 0 \Rightarrow h/2kM + n/k = 0 \text{ mod } 1$$
(here we think of $h \in \Q$).  Since $h/2kM \notin \Q$, this implies that both $h = 0$ and $n = 0$.  

Now it is easy to verify that $\Phi$ gives a cyclic order that agrees with $c$ on the union of all sets of the form $S \hat{t}^n$, for $0 \leq n \leq k-1$.  Within one set $S \hat{t}^n$,  this follows since $\Phi(s \hat{t}^n) = s/2kM + n/k$, we have $s \in (-M, M)$, and $\Phi$ preserves order on $S$.  Moreover,  for any $s_i \in S$, under $c$ we have $s_0, s_1\hat{t}, ..., s_k \hat{t}^k$ in positive cyclic order under $c$, and by construction the same holds in the image of $\Phi$.  
\hfill \qed

\section{Classification of groups with finitely many circular orderings}

\label{sec Tararins}

In this section we give a sufficient condition for a group to admit only finitely many circular orderings.  In Section \ref{cardinality sec} we will show this condition is also necessary, thus giving a complete classification of groups admitting finitely many circular orders.  
We will frequently be discussing rank one, torsion-free Abelian groups and their automorphisms, and so for ease of discussion we often will implicitly identify such a group with a subgroup of $\mathbb{Q}$, and an automorphism of such a group with multiplication by some rational number.

Our classification is analogous to Tararin's classification of groups allowing only finitely many left orders (cf. \cite{tararin} or \cite[Theorem 5.2.1]{kopitov medvedev}), which we recall here for future use.

\paragraph{Tararin groups.} 
Recall that a series
$\{1\}=G_m\lhd G_{m-1} \lhd \ldots \lhd G_0=G$ is said to be
{\em rational} if each quotient $G_{i}/G_{i+1}$ is a
rank-one, torsion-free Abelian group. Observe that then $G$ is a solvable group of finite rank\footnote{A solvable group $\Gamma$ has finite rank if all successive quotients $\Gamma_i/\Gamma_{i+1}$ in its derived series have finite rank as
Abelian groups. The rank of  $\Gamma$ is then defined as $\sum rank(\Gamma_i/\Gamma_{i+1})$. See \cite{robinson}.}.

\begin{thm}[Tararin \cite{tararin}] \label{teo tararin}{ Let $T$ be a left orderable
group. Then the following conditions are equivalent:
\begin{enumerate}
\item The group $T$ admits only finitely many left orderings. 
\item The group $T$ admits a unique (hence characteristic) rational series
$$ \{1\}=T_{m+1}\lhd T_{m} \lhd \ldots \lhd T_0=T$$
such that for every
$0\leq i\leq m-2$, there is an element of $T_i/T_{i+1}$ whose action
by conjugation on $T_{i+1}/T_{i+2}$ is by multiplication by a
negative rational number.
\end{enumerate}
}
\end{thm}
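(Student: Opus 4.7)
The plan is to establish both directions of the equivalence, with the bulk of the work going into the deep direction (1) $\Rightarrow$ (2). Throughout, I will use Hölder's theorem (identifying Archimedean left-ordered groups with subgroups of $\mathbb{R}$) and classical facts about \emph{Conradian} left orders.

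For (2) $\Rightarrow$ (1), I would first show that in every left order $<$ on $T$, each subgroup $T_i$ in the series must be convex. Arguing by downward induction on $i$: the trivial subgroup $T_{m+1}$ is convex, and assuming $T_{i+2}$ is convex, the induced order on $T_{i+1}/T_{i+2}$ is a left order on a rank-one torsion-free Abelian group, hence Archimedean. If $T_{i+1}$ failed to be convex in $T_i$, then some element $g \in T_i \setminus T_{i+1}$ would have arbitrarily large and small $T_{i+1}$-conjugates inside $T_{i+1}$ modulo $T_{i+2}$; but the existence in $T_i/T_{i+1}$ of an element whose conjugation acts on $T_{i+1}/T_{i+2}$ by a negative rational would then contradict the Archimedean nature of the induced order on $T_{i+1}/T_{i+2}$. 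Once all $T_i$ are known to be convex, the order on $T$ is determined by the choice of induced order on each rank-one torsion-free Abelian quotient $T_i/T_{i+1}$, each of which has exactly two left orders. The negativity condition then forces the orders on the two bottom-most quotients to be rigidly determined by the top choices (since the inducing conjugation by a negative element must send a positive cone into the opposite cone), so that only boundedly many orders remain.

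For (1) $\Rightarrow$ (2), I would start by arguing that finiteness of $\LO(T)$ implies that every left order on $T$ is Conradian. The conjugation action of $T$ on the finite set $\LO(T)$ has finite orbits and hence every order has a finite-index stabilizer; combined with an analogue of the $\Z$-valued cocycle trick (or directly via the arguments in \cite{NRC}), this forces the Conradian property. Each Conradian order comes equipped with a canonical chain of convex subgroups whose successive quotients are Archimedean, and thus rank-one torsion-free Abelian by Hölder's theorem. Finiteness of $\LO(T)$ forces this chain to be unique — otherwise two different chains would generate infinitely many orders via Proposition \ref{prop extensions}-type constructions — and hence characteristic. Finally, to obtain the negativity condition at each level, I would contrapose: if for some $i \leq m-2$ every element of $T_i/T_{i+1}$ acted on $T_{i+1}/T_{i+2}$ by a positive rational, one could independently reverse the induced order on $T_{i+1}/T_{i+2}$ while keeping the others fixed and paste together a left order via Construction-style extensions, producing infinitely many orders.

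The main obstacle is the Conradian step in (1) $\Rightarrow$ (2): the passage from ``finite orbit under conjugation on $\LO(T)$'' to a bona fide Conradian jump structure is the deepest part of Tararin's argument. A secondary technical difficulty is rigidifying the convex series to make it characteristic, which requires carefully comparing how different choices of chain would interact under the $T$-action on $\LO(T)$ to produce extra orders.
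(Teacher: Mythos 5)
The paper itself offers no proof of this statement: it is recalled verbatim from \cite{tararin} (see also \cite[Theorem 5.2.1]{kopitov medvedev}), so your sketch can only be measured against the standard published argument. Against that benchmark your skeleton is the right one: for (1)$\Rightarrow$(2), finiteness of $\LO(T)$ gives each order a finite-index stabilizer under the conjugation action, which by \cite{rolfsen,NRC} forces every order to be Conradian, and the convex-jump series of a Conradian order then supplies the rational series; for (2)$\Rightarrow$(1) the essential point is that every $T_i$ is convex in every left order. However, three of your steps are genuinely gapped or wrong as written.

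First, H\"older's theorem only makes the jump quotients torsion-free Abelian (subgroups of $\mathbb{R}$); it does not give rank one. Rank one, and the finiteness of the chain of jumps, must be extracted from finiteness of $\LO(T)$ a second time, e.g.\ via the fact that a torsion-free Abelian group of rank at least two has uncountably many left orders \cite{sikora}, each of which extends to $T$ because the jump is convex. Second, your argument for the negativity condition cannot work: reversing the induced order on a quotient $T_{i+1}/T_{i+2}$ produces one new order per choice, hence at most $2^{m+1}$ orders in total --- never infinitely many. The correct argument is that if every element of $T_i/T_{i+1}$ acts on $T_{i+1}/T_{i+2}$ by a positive rational, then $T_i/T_{i+2}$ is a bi-orderable group of rank two (torsion-free nilpotent, or of Baumslag--Solitar type $\mathbb{Z}[1/pq]\rtimes\mathbb{Z}$ up to isolators), and one must produce infinitely many left orders on such a group, for instance orders in which $T_{i+1}/T_{i+2}$ fails to be convex; this is real work that the sketch omits. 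Third, in (2)$\Rightarrow$(1) the claim that the negativity condition ``rigidly determines'' the orders on the bottom quotients contradicts Remark \ref{rem orders on Tararins} of this very paper: all $2^{m+1}$ combinations of quotient orders do occur. The sole role of the negativity hypothesis in that direction is to force convexity of every $T_i$ in every left order (after which finiteness is immediate from the two orders on each rank-one quotient), and that convexity argument --- the technical heart of the implication --- is only gestured at in your sentence about ``arbitrarily large and small conjugates,'' which as stated is not a proof.
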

We call a group satisfying either of the above conditions a {\em Tararin group}.

\begin{rem} \label{rem orders on Tararins}
The possible left-orderings of a Tararin group $T$ as in (\ref{teo tararin}) are described as follows.   Since each quotient $T_i/T_{i+1}$ is rank-one Abelian, it admits exactly two distinct left-orders (arising from the restriction of the natural ordering of $\Q$, and the opposite or ``flipped" order).   For every $i=1,\ldots,m$, choose an ordering  $\preceq_i$  on $T_{i-1}/T_{i}$. Then we can
produce a left-ordering on $T$ by declaring $g \succ id$ if and only if $g \succ_i id$, where $i$ is the unique index such that $g \in T_{i-1} \setminus T_i$.

Tararin's theorem shows that these are the only possible left-orderings of $T$; in particular $|\LO(T)| = 2^{m+1}$.  Observe that in any of these orderings, the groups $T_i$ are
convex, and conversely, every convex subgroup is of this form. 
\end{rem}

Since Tararin groups all admit an infinite Abelian quotient with left-orderable kernel, Proposition \ref{prop extensions} and Theorem \ref{teo abelian} imply that every Tararin group admits infinitely many circular orders. 
However, we will show that certain finite extensions of Tararin groups have few left orders. The remainder of this Section is devoted to prove the following result, which gives one direction (the more difficult implication) in Theorem A as stated in the introduction.  

\begin{thm} \label{teo tararin circ}
Suppose that $G$ is a group of the form $G = T\rtimes \Z_n$, where $T$ is a Tararin group with maximal convex subgroup $T_1$ and $\Z_n$ acts by inversion on $T/T_1$.  Then $G$ admits only finitely many circular orders.  
\end{thm}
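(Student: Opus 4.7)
The plan is to analyze, for each $c \in \CO(G)$, its linear part $H = H(c)$ given by Proposition~\ref{prop linear part}, and then apply the injection $\psi: \LO(H)\times\CO(G/H) \hookrightarrow \CO(G)$ of Proposition~\ref{prop extensions}(2) whose image contains $c$. First, I would observe that $H$ is automatically normal in $G$: for any $g \in G$, the $G$-invariance of $c$ makes $gHg^{-1}$ a convex subgroup as well, so uniqueness of the maximal convex subgroup forces $gHg^{-1}=H$. Hence $\psi$ is defined for every such $H$, and $\CO(G) = \bigcup_{H} \psi(\LO(H)\times\CO(G/H))$ as $H$ ranges over the subgroups arising as linear parts. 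It therefore suffices to establish (i) the finiteness of the set of possible linear parts, and (ii) the finiteness of $\LO(H)$ and $\CO(G/H)$ for each such $H$.

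For (i) the key input is the Witte--Morris theorem: since $G$ is solvable (hence amenable) and $H$ is left-orderable (in particular torsion-free), $H$ must be locally indicable. Combined with the inversion hypothesis on the action of $\Z_n$ on $T/T_1$, this will force $H$ to be itself a Tararin group; this is the content of Proposition~\ref{prop tararin char}, the most technical ingredient of the section. Being normal in $G$ and Tararin, $H \cap T$ is $G$-invariant and must coincide with one of the characteristic subgroups $T_i$ in the rational series of $T$; the semidirect decomposition $G = T \rtimes \Z_n$ then leaves only finitely many options for $H$ itself, since $H/(H\cap T)$ embeds into the finite group $\Z_n$.

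For (ii), Tararin's theorem (Theorem~\ref{teo tararin}) immediately gives $|\LO(H)|<\infty$ for each Tararin $H$. To bound $|\CO(G/H)|$, I would induct on the length $m$ of the Tararin series of $T$. The base case $m=0$ has $T=\{1\}$ and $G=\Z_n$ finite cyclic, with $|\CO(\Z_n)|=\phi(n)$. For $m>0$, either $H\supseteq T$ (so $G/H$ is a finite cyclic quotient of $\Z_n$), or $H\cap T=T_i$ for some $0<i\leq m$, in which case $G/H$ is a quotient of $(T/T_i)\rtimes \Z_n$; the latter is again a group of the type considered in the theorem but with strictly shorter Tararin series (length $i$), so the inductive hypothesis applies.

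The main obstacle is (i), specifically Proposition~\ref{prop tararin char}: identifying exactly which locally indicable normal subgroups can arise as linear parts. The interplay between local indicability (via Witte--Morris), the rigid Tararin structure of $T$, and the inversion action of $\Z_n$ on $T/T_1$ is delicate, and extracting a Tararin structure on $H$ from these ingredients alone is where the bulk of the work will go.
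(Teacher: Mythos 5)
Your high-level skeleton matches the paper's: for each order $c$, pass to the linear part $H$, use the embedding $\LO(H)\times\CO(G/H)\hookrightarrow\CO(G)$, and win by showing there are finitely many possible $H$, each Tararin, with $G/H$ finite. You also correctly identify Proposition~\ref{prop tararin char} as the technical heart. But your argument for step (i) — finiteness of the set of possible linear parts — contains a false step. You claim that $H\cap T$, being normal ($G$-invariant), ``must coincide with one of the characteristic subgroups $T_i$.'' This is not true: the $T_i$ are the convex subgroups of $T$, not all of its $G$-invariant subgroups. The paper's own Example~\ref{ex weird tararin} is a counterexample: there $G=K_2\rtimes\Z_2$ with $T=\langle a,b\rangle$ and series $\{1\}\lhd\langle b\rangle\lhd K_2$, yet the linear part of the exhibited order meets $T$ in $\langle a^2,b\rangle$, which is none of these. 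More starkly, for $G=\Q\rtimes\Z_2$ there are infinitely many $G$-invariant subgroups of $T=\Q$, so ``normal in $G$'' alone cannot bound the candidates. Consequently your induction in (ii), which rests on $H\cap T=T_i$ to realize $G/H$ as a quotient of $(T/T_i)\rtimes\Z_n$, also collapses.

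The missing idea is dynamical. The paper passes to the dynamical realization of $c$ and uses amenability of $G$ to make the rotation number $\rot:G\to\R/\Z$ a homomorphism whose kernel is exactly the linear part (Corollary~\ref{ker loc ind}). The algebraic work with inverting bases (Proposition~\ref{prop central}, Lemmas~\ref{lem inversion} and~\ref{lem 0 and 1}) then shows that any such homomorphism kills the finite-index subgroup $\mathcal A=\ker(\phi)$, so $\rot$ factors through the finite group $G/\mathcal A$. This simultaneously gives (a) only finitely many candidates for $H=\ker(\rot)$ (all containing $\mathcal A$), and (b) $G/H$ finite cyclic, so $\CO(G/H)$ is finite with no induction needed. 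It also supplies the precise exact-sequence structure of $H$ (Corollary~\ref{kernel cor}) that the hypotheses of Proposition~\ref{prop tararin char} demand — you cannot simply cite that proposition without first establishing that $H$ sits in $0\to\mathcal A\to H\to\Z_2\times\cdots\times\Z_2\to 0$ with the prescribed involutions, and that verification is itself a substantial part of the proof. Finally, the paper first reduces to $n=2$ via Proposition~\ref{prop finite to one}; your outline omits this, which is harmless but means your $\Z_n$ bookkeeping would need more care.
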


\subsection{Algebraic structure of finite extensions of Tararin groups}

\begin{notation} Through the rest of this section, when we write $G=T\rtimes\Z_n$, we mean implicitly that $T$ is a Tararin group with convex series $\{1\}=T_{m+1}\lhd\ldots \lhd T_0=T$, and that the generator of $\Z_n$ acts on $T/T_1$ by inversion. 
\end{notation}

In this subsection, we  analyze the algebraic structure of a group $G=T\rtimes \Z_n$. Our first observation is a lemma borrowed from \cite{rivas comm}. It will allow us to define the notion of  {\em base} and {\em inverting base}, our main computational tool in describing the structure of $G$.

\begin{lem}\label{lem useful} Let $G =T \rtimes\Z_n$.  Then, for $b \in T_{m}$ and $a\in T$, we have $aba^{-1}=b^\epsilon$ where $\epsilon=\pm 1$.
\end{lem}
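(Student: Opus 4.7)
The plan is to exhibit the conjugation action of $T$ on $T_m$ as $2$-torsion in $\Q^*$, hence in $\{\pm 1\}$. Since $T_m$ is the bottom of the unique rational series of $T$, it is characteristic in $T$ and therefore normal in $G$. As a rank-one torsion-free Abelian group, $\mathrm{Aut}(T_m)$ embeds in $\Q^*$, and conjugation in $G$ defines a homomorphism $\Phi \colon G \to \Q^*$. The lemma amounts to showing $\Phi(T) \subseteq \{\pm 1\}$.

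The central computation is the following. Let $z$ generate $\Z_n$ and set $\psi(a) = zaz^{-1}$ for $a \in T$. Because $\Phi$ is Abelian-valued, $\Phi(\psi(a)) = \Phi(a)$ for every $a \in T$. The hypothesis that $z$ inverts $T/T_1$ gives $\psi(a) \equiv a^{-1} \pmod{T_1}$, so $\psi(a) = a^{-1}t$ for some $t \in T_1$. Substituting into $\Phi(\psi(a)) = \Phi(a)$ yields $\Phi(a)^2 = \Phi(t) \in \Phi(T_1)$, hence $\Phi(T)^2 \subseteq \Phi(T_1)$. Since squares in $\Q^*$ are positive, it suffices to prove $\Phi(T_1) \subseteq \{\pm 1\}$: then $\Phi(T)^2 \subseteq \{\pm 1\} \cap \Q^*_{>0} = \{1\}$, forcing $\Phi(T) \subseteq \{\pm 1\}$.

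The remaining claim $\Phi(T_1) \subseteq \{\pm 1\}$ we would establish by induction on the length $m$ of the rational series. The base case $m = 1$ is immediate because $T_1 = T_m$ is Abelian and so acts trivially on itself. For the inductive step, $\psi|_{T_1}$ is an automorphism of $T_1$ of order dividing $n$; it induces an element of $\mathrm{Aut}(T_1/T_2) \subseteq \Q^*$ of order dividing $n$, hence $\pm 1$. If this induced action is $-1$, then $T_1 \rtimes \Z_n$ satisfies the hypotheses of the lemma with shorter series, and the inductive hypothesis finishes the step. The hard case, and the main obstacle, is when $\psi|_{T_1}$ acts trivially on $T_1/T_2$: here one cannot reduce to the inductive hypothesis directly, and one must combine the compatibility of $\psi$ with the conjugation action $\mathrm{ad}_c$ of any $c \in T \setminus T_1$ (yielding an identity of the form $\psi \circ \mathrm{ad}_c = \mathrm{ad}_c^{\pm 1} \circ \psi$ on $T_1$) with the Tararin condition for $i = 0$, so as to rule out extensions $T_1$ whose internal conjugation multipliers on $T_m$ lie outside $\{\pm 1\}$. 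The detailed treatment of this case is the content of the argument attributed to \cite{rivas comm}.
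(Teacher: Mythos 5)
Your opening reduction is correct, and it is essentially the $i=0$ step of the paper's argument in disguise: writing $\Phi\colon G\to\mathrm{Aut}(T_m)\subseteq\Q^*$ for the conjugation action (legitimate, since $T_m$ is characteristic in $T$ and hence normal in $G$), the relation $zaz^{-1}=a^{-1}t$ with $t\in T_1$, together with commutativity of $\Q^*$, gives $\Phi(a)^2=\Phi(t)\in\Phi(T_1)$, and positivity of squares reduces the lemma to the claim $\Phi(T_1)\subseteq\{\pm1\}$. The problem is that you never prove that claim. Your proposed induction only goes through when $z$ happens to invert $T_1/T_2$ --- nothing in the hypotheses guarantees this --- and in the remaining case you explicitly defer to the reference. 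That deferred case is not a technicality: for $a\in T_1$ the semidirect product structure gives you no inversion to exploit, and the entire content of the lemma at the lower levels of the rational series lives exactly there. As written, the proposal is a reduction, not a proof.

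The paper closes this gap without ever using $z$ below the top level. It runs a descending induction on the index $i$ for which $a\in T_i\setminus T_{i+1}$, the case $i=m$ being trivial since $T_m$ is Abelian. For the inductive step one takes $c$ acting on $T_i/T_{i+1}$ by a \emph{negative} rational: for $i=0$ this is the generator of $\Z_n$, but for $i\geq 1$ it is supplied by the Tararin condition itself (an element of $T_{i-1}$). Negativity yields positive integers $p,q$ and $w\in T_{i+1}$ with $ca^{-p}c^{-1}=a^qw$. Writing $aba^{-1}=b^r$, $cbc^{-1}=b^t$, and using the inductive hypothesis that $w\in T_{i+1}$ acts on $b$ by $\epsilon=\pm1$, computing $a^qba^{-q}$ in two ways gives $b^{r^q}=b^{\epsilon r^{-p}}$, hence $|r|^{p+q}=1$ and $r=\pm1$. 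The point your proposal misses is that inversion is not the essential input at levels $i\geq1$; what matters is only that \emph{some} element acts by a negative rational, since that is what makes both exponents $p$ and $q$ positive and forces $|r|=1$. Your identity $\Phi(a)^2\in\Phi(T_1)$ is precisely the case $p=q=1$ of this computation. Replacing your appeal to the reference by this descending induction (your claim $\Phi(T_1)\subseteq\{\pm1\}$ then becomes the union of the steps $i\geq1$) would complete the proof.
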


\proof Let $b\in T_{m}$, $b\neq id$, and $a\in T$.  Then $a \in T_i$ for some $i=0,1,\ldots, m$.  We shall proceed by descending induction on $i$. The case $i=m$ is obvious since in this case $a$ and $b$ commute.  For the inductive step, assume that for any $w \in T_{i+1}$, we have $ wbw^{-1}=b^\epsilon$ for $\epsilon=\pm1$, and let $a \in T_i \setminus T_{i+1}$.  

From the structure of $G$, we know that there is $c\in G$ whose action by conjugation is multiplication by a negative  rational number on $T_i/T_{i+1}$: when $i >0$ the existence of $c$ follows from the definition of the Tararin group $T$, and when $i=0$ we take $c$ to be the generator of $\Z_n$. Thus there are positive integers $p,q$ such that $ca^{-p}c^{-1}=a^{q}w$, for some $w\in T_{i+1}$.

 Let $t \in \Q$ be such that $cbc^{-1}=b^t$, and suppose that $aba^{-1}=b^r$ for some $r \in \Q$. We will show $r = \pm 1$.  We have then that $a^nba^{-n}=b^{r^n}$ for all $n\in \Z$. Further we may calculate:
$$b^{r^{q}}=a^qba^{-q} = ca^{-p}c^{-1} w^{-1} b wca^pc^{-1}=ca^{-p}b^{\epsilon/t}a^pc^{-1}=cb^{\frac{\epsilon r^{-p} }{t }}c^{-1}=b^{\epsilon r^{-p}}.$$
Thus, $|r^{-p}| = |r^q|$, which implies that $r = \pm 1$.  $\hfill\square$

\vs
Applying Lemma \ref{lem useful} to the group $G/T_{i+1} \cong (T/T_{i+1}) \rtimes \Z_n$, one concludes

\begin{coro}  \label{cor useful}
For $i =0, \ldots, m-1$ the action of each $g\in G$ on $T_i / T_{i+1}$ is either trivial, or by inversion.  
\end{coro}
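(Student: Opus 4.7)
My plan is to apply Lemma~\ref{lem useful} to the quotient $\bar G := G/T_{i+1}$ and then supplement with a short finite-order argument for the $\Z_n$-factor.

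First, since the Tararin series of $T$ is characteristic (by the uniqueness in Theorem~\ref{teo tararin}), each $T_j$ is characteristic in $T \triangleleft G$ and hence normal in $G$. The quotient $\bar G \cong (T/T_{i+1}) \rtimes \Z_n$ therefore makes sense and fits the standing notation: $T/T_{i+1}$ is a Tararin group with rational series
\[ \{\bar 1\} \lhd T_i/T_{i+1} \lhd \cdots \lhd T_0/T_{i+1} = T/T_{i+1}, \]
whose bottom nontrivial term $T_i/T_{i+1}$ plays the role of ``$T_m$'' in Lemma~\ref{lem useful}, and $\Z_n$ still acts by inversion on the top quotient $(T/T_{i+1})/(T_1/T_{i+1}) \simeq T/T_1$. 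Applying Lemma~\ref{lem useful} in $\bar G$ therefore yields, for every $\bar a \in T/T_{i+1}$ and every $\bar b \in T_i/T_{i+1}$, that $\bar a\, \bar b\, \bar a^{-1} = \bar b^{\pm 1}$. Lifted back to $G$, this says every $g \in T$ acts on $T_i/T_{i+1}$ by $\pm 1$.

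Next, I would handle conjugation by elements of $\Z_n$ separately. Since $T_i/T_{i+1}$ is rank-one torsion-free abelian, its automorphism group embeds into $\Q^*$ via multiplication by a rational. Any element of $\Z_n$ has order dividing $n$, so the rational scalar implementing its action on $T_i/T_{i+1}$ must be a root of unity in $\Q^*$, and hence equals $\pm 1$. A general element factors as $g = ts$ with $t \in T$ and $s \in \Z_n$, and the composition of two actions in $\{\pm 1\}$ is again in $\{\pm 1\}$, which completes the proof.

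The main obstacle is chiefly one of bookkeeping: Lemma~\ref{lem useful} is stated only for conjugation by elements of $T$, so strengthening to all $g \in G$ requires the auxiliary finite-order argument. This argument works cleanly exactly because the quotients $T_i/T_{i+1}$ are rank-one torsion-free abelian, forcing the automorphism group to sit inside $\Q^*$, where $\pm 1$ are the only roots of unity.
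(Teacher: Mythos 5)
Your proof is correct and follows the paper's approach exactly: the paper's entire proof is the single sentence ``Applying Lemma~\ref{lem useful} to the group $G/T_{i+1} \cong (T/T_{i+1}) \rtimes \Z_n$, one concludes,'' which is precisely your first paragraph. Your second paragraph --- extending from $T$ to all of $G$ by noting that a finite-order automorphism of a rank-one torsion-free abelian group is multiplication by a root of unity in $\Q^*$, hence by $\pm 1$ --- supplies a detail the paper leaves implicit, and it is the right way to supply it.
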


\begin{defi} Let $G=T\rtimes\Z_n$ where $T$ is a Tararin group with maximal convex subgroup $T_1$ and $\Z_n$ acts by inversion of $T/T_1$. We say that a subset $\{c, a_0,\ldots, a_m\}$ of $G$ is a {\bf\em base for $G$} if \begin{itemize}
\item $cT$ generates $G/T$,  $c$ has order $n$ in $G$, and the conjugation action of $c$ on each $T_i/T_{i+1}$ is by inversion.
\item For all $i=0,\ldots,m$, $a_i\in T_i\setminus T_{i+1}$ and $a_i$ acts by inversion of $T_{i+1}/T_{i+2}$.

\end{itemize}
If in addition 
\begin{itemize}
\item $a_i$ acts by inversion of $T_j/T_{j+1}$ for all $j>i$,
\end{itemize} we say that the base is an {\bf\em inverting base}.  If $\{c, a_0,\ldots, a_m\}$ is a base (resp. inverting base) of $G$, then we will refer to $\{a_0, \ldots, a_m\}$ as a base (resp. inverting base) of $T$.

\end{defi}

Since each of the quotients $T_i/T_{i+1}$ is rank one Abelian, $\mathrm{Aut}(T_i/T_{i+1})$ is also Abelian, and its torsion subgroup is isomorphic to $\mathbb{Z}_2$ (identifying  $T_i/T_{i+1}$ with a subgroup of $\mathbb{Q}$, the unique nontrivial finite order automorphism is multiplication by $-1$).  

Let $A_i$ denote the torsion subgroup of $\mathrm{Aut}(T_i/T_{i+1})$, and let $\phi_i : G \rightarrow \mathrm{Aut}(T_i/T_{i+1})$ be the conjugation action of $G$ on $T_i/T_{i+1}$ (recall that each $T_i$ is normal in $G$).  
By Corollary \ref{cor useful}, the image of $\phi_i$ lies in $A_i$.  Since this action will play a major role in our following work, we set some further notation.  

\begin{notation}
Let $\phi: G \rightarrow A_0 \times \ldots \times A_{m-1}$ be the homomorphism given by $$\phi(g) = (\phi_0(g), \ldots, \phi_m(g)).$$ Let 
 $\mathcal A = \ker(\phi)$.  Note that $\mathcal A$ is a subgroup of $T$ and admits a natural filtration $\mathcal A_i =\mathcal A\cap T_i$.  Moreover, $\mathcal A_i$ is the kernel of the restriction of $\phi $ to $T_i\rtimes \Z_n$.
\end{notation}

\begin{prop}The group $G=T\rtimes \Z_n$ admits an inverting base.
\end{prop}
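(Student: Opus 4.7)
The plan is to construct $\{c, a_0, \ldots, a_m\}$ in two steps: first an inverting base for $T$, then the element $c$.

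\textbf{Building the $a_i$'s.} By Tararin's theorem (together with Corollary~\ref{cor useful}, which forces any nontrivial action on a quotient to be by $-1$), for each $i \in \{0, \ldots, m-1\}$ we can pick some $\tilde a_i \in T_i \setminus T_{i+1}$ acting by inversion on $T_{i+1}/T_{i+2}$; for $i=m$ pick any $\tilde a_m \in T_m \setminus \{1\}$. I would then process from the bottom up: assuming $a_{i+1}, \ldots, a_m$ have already been adjusted so that each $a_j$ inverts on every $T_k/T_{k+1}$ with $k > j$, define
\[
a_i = \tilde a_i \cdot a_{i+2}^{\alpha_{i+2}} \cdots a_m^{\alpha_m},
\]
with exponents $\alpha_j \in \{0,1\}$ determined by a uniquely solvable triangular $\Z/2\Z$-linear system so that $a_i$ inverts on each $T_j/T_{j+1}$ with $j > i$. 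The correction works cleanly because $a_j \in T_j$ acts trivially on $T_k/T_{k+1}$ whenever $k < j$, so adjustments at level $j$ never disturb properties already established at lower levels; one also checks $a_i \in T_i \setminus T_{i+1}$ and that the inversion on $T_{i+1}/T_{i+2}$ provided by $\tilde a_i$ is preserved.

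\textbf{Building $c$.} Let $\gamma$ be the generator of the $\Z_n$-complement in $G = T \rtimes \Z_n$, so $\gamma^n = 1$ and $\gamma$ inverts on $T/T_1$. I would seek $c = t\gamma$ with $t = a_0^{\beta_0} \cdots a_{m-1}^{\beta_{m-1}}$; the requirement that $c$ act by inversion on each $T_i/T_{i+1}$ translates into another triangular $\Z/2\Z$-system in the $\beta_j$'s which is uniquely solvable, since $a_i$ inverts on $T_j/T_{j+1}$ precisely when $j > i$.

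\textbf{Verifying $c^n = 1$.} Using $\gamma^n = 1$ one has $c^n = \prod_{k=0}^{n-1}\gamma^k t \gamma^{-k} \in T$, and the claim is that $c^n \in T_{i+1}$ for every $i$, so in particular $c^n \in T_{m+1} = \{1\}$. The argument is by induction on $i$: in the Abelian quotient $T_i/T_{i+1}$ the element $c$ acts by inversion (by construction), so the image of the product becomes an alternating product $\bar t \cdot \bar t^{-1} \cdot \bar t \cdots$ of $n$ terms which vanishes because $n$ is even. The hard part is precisely this last step: one must carefully track how the conjugation of $t$ by powers of $\gamma$ interacts with the triangular $a_i$-decomposition to confirm that the ``error'' at each level actually lies in $T_{i+1}$ (and not merely in the kernel of $\phi$), and it is here that the specific form of the $\beta_j$'s chosen in the previous stage and the rigidity provided by Lemma~\ref{lem useful} enter decisively.
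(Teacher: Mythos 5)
Your overall strategy --- realizing the required elements by solving unitriangular systems over $\Z/2\Z$ built from the mod-$2$ conjugation characters $\phi_j\colon G\to\mathrm{Aut}(T_j/T_{j+1})$ --- is sound, and it is essentially an explicit version of the paper's argument, which instead observes that the product map $\phi$ is surjective and picks $c$ with $\phi(c)=(1,\ldots,1)$. However, there is an off-by-one error that breaks your construction of the $a_i$'s as written. An element $a_k\in T_k$ acts trivially on $T_j/T_{j+1}$ for all $j\le k$ (not only for $j<k$), since conjugation inside the Abelian quotient $T_k/T_{k+1}$ is trivial; hence the correction factor that controls level $j$ is $a_{j-1}$, not $a_j$. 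With your correction product $a_{i+2}^{\alpha_{i+2}}\cdots a_m^{\alpha_m}$, none of the available factors acts nontrivially on $T_{i+2}/T_{i+3}$, so the equation at level $i+2$ reads $\phi_{i+2}(\tilde a_i)=1$ and is unsatisfiable whenever $\tilde a_i$ happens to centralize $T_{i+2}/T_{i+3}$ --- which Tararin's theorem does not rule out. The fix is immediate: let the correction run over $a_{i+1}^{\alpha_{i+1}}\cdots a_{m-1}^{\alpha_{m-1}}$ (note $\phi(a_m)=0$, so $a_m$ contributes nothing). The system is then genuinely unitriangular and uniquely solvable, one still has $a_i\equiv\tilde a_i\pmod{T_{i+1}}$, and the inversion on $T_{i+1}/T_{i+2}$ is undisturbed. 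Your construction of $c=t\gamma$ is correct as stated.

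The step you flag as ``the hard part,'' namely $c^n=1$, needs no tracking of the alternating product, and your proposed route is not well posed as written: you cannot take the image of $\gamma^k t\gamma^{-k}$ in $T_i/T_{i+1}$ when $t\notin T_i$. The paper's argument is a one-liner and closes the gap: $c^n\in T$ because $cT$ generates $G/T\cong\Z_n$; if $c^n\neq \id$, choose $i$ with $c^n\in T_i\setminus T_{i+1}$; since $c$ inverts $T_i/T_{i+1}$ by construction, $cc^nc^{-1}\equiv c^{-n}\pmod{T_{i+1}}$, while $c$ commutes with $c^n$, so $c^{2n}\in T_{i+1}$ and torsion-freeness of $T_i/T_{i+1}$ forces $c^n\in T_{i+1}$, a contradiction. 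Replacing your final paragraph with this commutation argument, and correcting the index range above, yields a complete proof.
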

\begin{proof}  By the definition of Tararin groups, and since $\mathbb{Z}_n$ acts by inversion on $T/T_1$,  there exists $a_i \in T_{i-1} \setminus T_{i}$ for $i >0$ and $a_0 \in G \setminus T$ that acts by multiplication by a negative rational (and thus by inversion) on $T_{i}/T_{i+1}$. Thus $\phi(a_i) =( 0, \ldots, 0, 1, *, \ldots, *) \in A_0 \times \ldots \times A_{m-1}$, where the stars indicate arbitrary entries, and the `1' appears in the $i$-th position.  Since the set of such elements form a generating set for $ A_0 \times \ldots \times A_{m-1}$, $\phi$ must be surjective.    Now let $c \in G$ be such that $\phi(c) = (1, \ldots, 1)$.  Then $c = tz$ for some $t \in T$, and $z$ a generator of $\Z_n$, and $c^n$ belongs to $T$, say $c^n\in T_i\setminus T_{i+1}$. If $c^n$ is not trivial, then, $cc^nc^{-1}=c^{-n}$ (mod $T_{i+1}$), but $c$ and $c^n$ commute, so $c^{-n}=c^n$ (mod $T_{i+1}$), contradicting the fact that $c^n\notin T_{i+1}$. Thus $c$ has orden $n$ and $G$ has an inverting base.  
\end{proof}

In general, an automorphism of $T$ fixing each successive quotient $T_i/T_{i+1}$ may not be the identity (this already happens for $T= \langle a,b\mid aba^{-1}=b^{-1}\rangle$). However, this is not the case if we demand that the automorphism fix the elements of an inverting base.

\begin{lem}
\label{lem identity} Let $G=T\rtimes\Z_n$, and $\{c,a_0,\ldots,a_m\}$ an inverting base. If $\phi : T \rightarrow T $ is an an automorphism of $T$ such that $\phi(a_i) = a_i$ for $i=0, \ldots, m$, then $\phi$ is the identity.
\end{lem}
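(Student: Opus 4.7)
The plan is to proceed by descending induction on $i$, showing $\phi|_{T_i} = \id$ for each $i = m+1, m, \ldots, 0$. The base case $T_{m+1} = \{e\}$ is trivial, and the full statement is the case $i=0$.

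For the inductive step, suppose $\phi$ restricts to the identity on $T_{i+1}$, and let $g \in T_i$. Because Tararin's theorem guarantees the rational series is characteristic, $\phi$ preserves $T_i$ and $T_{i+1}$ and descends to an automorphism of $T_i/T_{i+1}$. This quotient is rank-one torsion-free Abelian, so its automorphism group embeds into $\Q^{\ast}$ acting by multiplication; since the descended map fixes the nontrivial class of $a_i$, it must be the identity. Hence I can write $\phi(g) = g \esp h_g$ for some $h_g \in T_{i+1}$, and my goal reduces to forcing $h_g = e$.

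To do this, I would exploit the commutator $[a_i, g]$, which lies in $T_{i+1}$ (as $T_i/T_{i+1}$ is Abelian) and is therefore fixed by $\phi$ by the inductive hypothesis. On the other hand, a direct calculation using $\phi(a_i) = a_i$ and $\phi(g) = g h_g$ gives
\[
[a_i, g h_g] \;=\; [a_i, g] \cdot g \esp [a_i, h_g] \esp g^{-1}.
\]
Equating this with $\phi([a_i, g]) = [a_i, g]$ yields $[a_i, h_g] = e$. If $h_g$ were nontrivial, I could pick $j \geq i+1$ with $h_g \in T_j \setminus T_{j+1}$; because $\{c, a_0, \ldots, a_m\}$ is an \emph{inverting} base, $a_i$ acts on $T_j/T_{j+1}$ by inversion, so $[a_i, h_g] \equiv h_g^{-2} \pmod{T_{j+1}}$, which is nonzero in the torsion-free group $T_j/T_{j+1}$. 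This contradicts $[a_i, h_g] = e$, so $h_g = e$ and the induction advances.

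The main obstacle, and the point where the stronger \emph{inverting} hypothesis is essential, is the final paragraph: one cannot predict a priori which lower term $T_j$ of the filtration contains the error $h_g$, so one genuinely needs $a_i$ to act by inversion on \emph{every} subsequent quotient, not merely on $T_{i+1}/T_{i+2}$ as a plain base would provide. Everything else is essentially formal manipulation of the fact that $\phi$ is a homomorphism compatible with the characteristic filtration.
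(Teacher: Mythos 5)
Your proof is correct, but it follows a different induction than the paper's. The paper inducts on the rank of $T$: passing to the quotient $T/T_m$ (which inherits the inverting base $\{\bar c,\bar a_0,\dots,\bar a_{m-1}\}$), it concludes that any discrepancy is confined to the bottom term, writes $\phi(x)=xw$ with $w\in T_m$, and derives a contradiction from the relation $a_i^px^q\in T_{i+1}$ via a parity argument on $q$ together with the fact (Corollary 4.7) that $x$ acts on $T_m$ either trivially or by inversion. You instead run a descending induction along the filtration itself, confining the discrepancy to $T_{i+1}$ at each stage via the characteristic series and the rigidity of $\mathrm{Aut}$ of a rank-one group, and then killing it with the commutator identity $[a_i,gh_g]=[a_i,g]\cdot g[a_i,h_g]g^{-1}$ and the inverting property applied to whichever quotient $T_j/T_{j+1}$ contains $h_g$. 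Your route avoids the $p,q$ bookkeeping entirely and makes completely explicit where the full ``inverting'' hypothesis (inversion on \emph{every} lower quotient, not just the next one) enters; the paper's route only ever has to handle an error term lying in the Abelian bottom group $T_m$, at the cost of the parity analysis. Both arguments are sound, and yours is if anything slightly cleaner.
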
 
\begin{proof}
We induct on the rank of $T$.  The proposition is true if $T$ is rank one.  Now assume $T$ is of rank $m$, that $\phi : T \rightarrow T$ fixes the inverting base $\{a_0, \ldots, a_m \}$ and that the induced homomorphism $\bar{\phi} : T /T_m \rightarrow T/T_m$ is the identity since it fixes the inverting base $\{a_0T_m, \ldots, a_{m-1}T_m \}$.  

Suppose there exists $x \in T_i \setminus T_{i+1}$ satisfying $\phi(x) \neq x$, note $x \notin T_m$ since $\phi(a_m) = a_m$ implies $\phi$ is the identity on $T_m$.  Choose $x$ such that $i<m$ is maximal. By the induction assumption $\phi(x) = xw$ for some $1 \neq w \in T_m$.  Choose $p,q$ coprime integers such that $a_i^p x^q \in T_{i+1}$.  Note that $q$ cannot be even, for then $p$ would be odd and $a_i^p x^q$ would act nontrivially by conjugation on $T_{i+1}/T_{i+2}$, contradicting $a_i^p x^q \in T_{i+1}$.  

Consider the action of $x$ by conjugation on $T_m$.  If it is trivial, then $\phi(a_i^p x^q ) = a_i^p (xw)^q = a_i^p x^q w^q$, contradicting maximality of $i$ since $w \neq 1$.  If $x$ acts by inversion on $T_m$, then as $q$ is odd we compute $\phi(a_i^p x^q ) = a_i^p (xw)^q = a_i^p x^q w$, again contradicting maximality of $i$.  Thus $\phi$ must be the identity.
\end{proof}

This enables us to show our central result in this section.

\begin{prop}\label{prop central} If $\{c,a_0,\ldots,a_m\}$ is an inverting base for $G=T\rtimes \Z_n$, then
\begin{enumerate}
\item The element $c^2$ is central in $G$, and $a_j^2$ is central in $T_j$. 
\item The subgroup $\mathcal{A} = \ker(\phi)$ is Abelian.
\item If $x\in \mathcal A$ then $cxc^{-1}=x^{-1}$, while  $a_ixa_i^{-1}=x^{-1}$ for all $x\in \mathcal{A}_{i+1}=\mathcal A\cap T_{i+1}$.
\item The subgroup $\mathcal{A}_i$ is rank $m-i+1$ with $\{ a_i^2, \ldots, a_m^2\}$ as a maximal linearly independent subset (over $\mathbb{Z}$).
\end{enumerate}
\end{prop}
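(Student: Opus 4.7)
My approach is to induct on the rank $m+1$ of the Tararin group $T$. The base case $m=0$ (rank-one Abelian $T$) follows by direct verification. For the inductive step I would establish the four parts in the order: Part 1 for $c^2$, then Part 1 for $a_j^2$ (first $j\geq 1$, then $j=0$), then Part 3 for $c$, then Part 2, then Part 3 for $a_i$, and finally Part 4.

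For Part 1 (centrality of $c^2$), I would use downward induction on $i$ to show $\tau := \mathrm{ad}_{c^2}$ is trivial on $T_i$. The base $i=m$ holds because $c$ inverts the rank-one Abelian $T_m$. For the step, assuming $\tau|_{T_{i+1}}=\mathrm{id}$ and noting that $c$ inverts $T_i/T_{i+1}$, one writes $\tau(t) = t\cdot w(t)$ with $w(t)\in T_{i+1}$; iteration gives $\tau^k(t) = t\cdot w(t)^k$, and $c^n=\mathrm{id}$ forces $w(t)^{n/2}=\mathrm{id}$, hence $w(t)=\mathrm{id}$ by torsion-freeness of $T_{i+1}$. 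Centrality of $a_j^2$ in $T_j$ for $j\geq 1$ then follows by applying the outer inductive hypothesis to the subgroup $T_1\rtimes\langle c\rangle$, itself of the same form with Tararin group of smaller rank.

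Centrality of $a_0^2$ in $T$ is the main technical hurdle, since $a_0^2$ has infinite order and the finite-order trick from the $c^2$ argument fails. I would apply the inductive hypothesis to $G/T_m$ to obtain $a_0^2 t a_0^{-2} = t\cdot w(t)$ with $w(t)\in T_m$, and $w|_{T_m}\equiv\mathrm{id}$ by Lemma \ref{lem useful}. To force $w\equiv\mathrm{id}$, exploit the outer involution $\sigma:=\mathrm{ad}_c$ (of order $2$ by centrality of $c^2$): a direct calculation shows $\sigma(a_0^2) = c a_0^2 c^{-1} = a_0^{-2}\cdot\tilde w$ for some $\tilde w\in T_2$, and applying $\sigma$ to the relation $a_0^2 t a_0^{-2} = t\cdot w(t)$ together with $\sigma(w(t)) = w(t)^{-1}$ (as $c$ inverts $T_m$) yields, after rearrangement, the equation $w(t)^2=\mathrm{id}$. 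Torsion-freeness of $T_m$ then gives $w(t)=\mathrm{id}$ for all $t$.

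For Part 3 (inversion by $c$ on $\mathcal A$): for $x\in\mathcal A$, write $cxc^{-1} = x^{-1}\cdot u(x)$, where $u(x)\in\mathcal A$ using normality of $\mathcal A$ in $G$ (as $\mathcal A=\ker\phi\cap T$). Applying $c$ twice and using centrality of $c^2$ gives $xu(x)x^{-1}=u(x)^{-1}$; if $x\in T_k$ and $u(x)\in T_{k+1}$, combined with $x\in\mathcal A$ acting trivially on $T_{k+1}/T_{k+2}$, one gets $u(x)^2\in T_{k+2}$, hence $u(x)\in T_{k+2}$ by torsion-freeness. Iterating descent through the Tararin series forces $u(x)\in T_{m+1}=\{1\}$, so $cxc^{-1}=x^{-1}$. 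Part 2 ($\mathcal A$ Abelian) then falls out immediately: for $x,y\in\mathcal A$, the identity $c(xy)c^{-1} = (xy)^{-1} = y^{-1}x^{-1}$ must equal $(cxc^{-1})(cyc^{-1}) = x^{-1}y^{-1}$, so $xy=yx$. Part 3 for $a_i$ on $\mathcal A_{i+1}$ uses the identical argument with centrality of $a_i^2$ in $T_i$ (Part 1) in place of $c^2$. Finally Part 4 is a rank count: the squares $\{a_k^2\}_{k\geq i}\subset\mathcal A_i$ commute pairwise by Part 2 and are linearly independent since each $a_k^2\in T_k\setminus T_{k+1}$ contributes to the rank-one quotient $T_k/T_{k+1}\otimes\mathbb Q$; maximality follows because $T_i/\mathcal A_i$ has exponent $2$ (rank zero over $\mathbb Q$), so $\mathcal A_i$ has rational rank $m-i+1$. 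The principal obstacle throughout is centrality of $a_0^2$, where exploiting the outer involution $c$ is forced to replace the unavailable finite-order argument.
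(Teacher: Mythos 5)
Your inductive architecture matches the paper's, and several of your steps are sound or even cleaner than the original: the finite-order descent for $c^2$, the filtration descent for part (3), and especially deducing part (2) from part (3) by comparing $c(xy)c^{-1}=(xy)^{-1}=y^{-1}x^{-1}$ with $(cxc^{-1})(cyc^{-1})=x^{-1}y^{-1}$. However, the step you yourself identify as the crux---centrality of $a_0^2$---has a genuine gap: applying $\sigma=\mathrm{ad}_c$ to $a_0^2ta_0^{-2}=t\,w(t)$ does \emph{not} yield $w(t)^2=\mathrm{id}$. Write $\phi=\mathrm{ad}_{a_0^2}$, so $\phi(t)=t\,w(t)$ with $w(t)\in T_m$ and $\phi|_{T_m}=\mathrm{id}$. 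Even in the most favorable case $\tilde w=1$, the relation $\sigma(a_0^2)=a_0^{-2}$ gives $\sigma\phi\sigma^{-1}=\phi^{-1}$, and translating to the cocycle, $(\sigma\phi\sigma^{-1})(t)=t\cdot\sigma\bigl(w(\sigma^{-1}(t))\bigr)=t\cdot w(\sigma^{-1}(t))^{-1}$ because $c$ inverts $T_m$, while $\phi^{-1}(t)=t\,w(t)^{-1}$. The two inversions cancel, and you obtain only the symmetry $w\circ\sigma=w$, which is perfectly consistent with $w\not\equiv\mathrm{id}$. With $\tilde w\neq 1$ the same manipulation produces $[\sigma(t)^{-1},\tilde w]=w(\sigma(t))\,w(t)^{-1}$, again no contradiction. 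So the ``rearrangement'' you invoke does not exist, and the main technical hurdle remains open in your write-up.

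The statement is nonetheless true, but the contradiction must be extracted from the fact that $a_0$ itself (not $c$) inverts $T_m$, combined with the inductively known parts (2) and (3). The paper first reduces, via Lemma \ref{lem identity}, to showing that $a_0^2$ commutes with each base element $a_i$; assuming $a_0^2a_ia_0^{-2}=a_iw$ with $1\neq w\in T_m$ and $i$ maximal, it writes $a_0a_ia_0^{-1}=a_i^{-1}x$ with $x=a_ia_0a_ia_0^{-1}\in\mathcal A_{i+1}$ (here the \emph{inverting} property of the base enters), then uses $a_ixa_i^{-1}=x^{-1}$ and commutativity of $\mathcal A_1$ from the inductive hypothesis to conclude $a_0wa_0^{-1}=w$, contradicting that $a_0$ inverts $T_m$. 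Your proof needs to be repaired along these lines; the outer involution $\sigma$ alone cannot close it. (A minor further remark: in your part (3) the identity obtained from centrality of $c^2$ is $cuc^{-1}=x^{-1}ux$ rather than $xux^{-1}=u^{-1}$, but combined with $u\in\mathcal A$ acting trivially on successive quotients the descent you describe still terminates, so that step is repairable as written.)
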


\begin{proof} The centrality of $c$ follows from Lemma \ref{lem finite order autos}. To show the other assertions, we induct on the rank of the Tararin group $T$.  For Tararin groups of rank one the proposition is obvious since $c$ acts by inversion of the rank one Abelian group $T$.  Assume it holds for groups of rank $m-1$ or less, and let $G = T\rtimes\Z_n$ where $T$ has rank $m$.  Then since $T_1$ is rank $m-1$ with inverting base $\{c,a_1, a_2,\ldots,a_m\}$, the inductive hypothesis says that (1), (2) and (3) hold (we will deduce (4) after having proved (1)--(3) by induction), namely $a_j^2$ is central in $T_j$ for $j>0$, that $\mathcal A_1=T_1\cap \mathcal A$ is Abelian, and that $a_ixa_i^{-1}=x^{-1}$ for $i>0$ and $x\in \mathcal {A}_{i+1}$.     

We first prove (1), where we need only show that $a_0^2$ is central. Observe that $\{ c, a_0,\ldots , a_{m-1} \}$ is an inverting base of $G /T_m$, hence, by the inductive hypothesis applied to the Tararin group $T/T_m$, $a_0^2$ is central in $T_0/T_m$. By Lemma \ref{lem identity} it suffices to show that $a_0^2$ commutes with all $a_i$, so suppose there is $a_i \neq id$ such that $a_0^2a_ia_0^{-2}= a_iw$, where $1 \neq w\in T_m$ and $i$ is as large as possible.  In particular, this means that $a_0^2$ commutes with all elements of $T_{i+1}$ by Lemma \ref{lem identity}.

Let $x\in T_{i+1}$ be such that $a_0a_ia_0^{-1}=a_i^{-1}x$. Then $a_iw=a_0^2a_ia_0^{-2}=a_0 a_i^{-1} x a_0^{-1}=x^{-1}a_i a_0 x a_0^{-1}$.  Since $x = a_i a_0 a_i a_0^{-1}$ and thus $x \in \mathcal A_{i+1}$,  the inductive hypothesis yields $a_ixa_i^{-1}=x^{-1}$.  Thus from $a_iw=x^{-1}a_i a_0 x a_0^{-1}$ it follows that $w=xa_0xa_0^{-1}=a_0xa_0^{-1}x$  (the last equality is because $\mathcal A_1$ is Abelian and contains both $w$ and $x$, so we may conjugate by $x$). But then $a_0$ acts trivially by conjugation on $T_m$, since $a_0wa_0^{-1}= a_0^2 xa_0^{-2} a_0xa_0^{-1}=xa_0xa_0^{-1}=w$ (here we use $a_0^2 x a_0^{-1} = x$ since $x \in T_{i+1}$). This contradicts the fact that $a_0$ acts by inversion of $T_m$, thus $a_0^2$ is central in $T_0$.

We now show (3), which by our induction assumptions reduces to showing $cxc^{-1}=x^{-1}$ for all $x\in \mathcal A$ and $a_0xa_0^{-1}=x^{-1}$ for all $x\in \mathcal A_1$.  We only show the former, as the latter can be shown in a similar way.  Choose $x \in \mathcal{A}$, and apply the induction assumptions to the group $T/T_m$ to arrive at $cxc^{-1}=x^{-1}w$ for some $w\in T_m$.  Then since $c^2$ is central we calculate $x=(cx^{-1}c^{-1})\,(cwc^{-1}) =w^{-1} x w^{-1}=xw^{-2}$, where the last equality is because $w\in T_m$ and $x \in \mathcal{A}$, and thus $x$ acts trivially on $T_m$. This contradicts the fact that $T_m$ is torsion free.

We next show (2).  Let $x,y\in \mathcal A$.  As before, consider the quotient $T/T_m$ and apply the induction hypothesis to arrive at $xyx^{-1}=yw$ for some $w\in T_m$. Since $x$ acts trivially on $T_m$ by conjugation, we may conjugate this equation by $x^{-1}$ to arrive at $x^{-1}yx = yw^{-1}$, or $x^{-1}y^{-1}x = wy^{-1}$ upon inverting.  On the other hand, starting with $xyx^{-1}=yw$ and conjugating by $c$ yields $x^{-1}y^{-1}x = y^{-1} w^{-1}$, by applying (3).  Thus $y^{-1} w^{-1} = wy^{-1}$, and as $y$ acts trivially by conjugation on $T_m$, we deduce $w^2=1$ and thus $w=1$ as $T$ is torsion free.  Thus $\mathcal{A}$ is Abelian.

Last we show (4).  To see that $\mathcal{A}_j$ is of rank $m-j+1$, let $a \in \mathcal{A}_j$ be given.  Suppose $a \in T_i \setminus T_{i+1}$ for some $i \geq j$.  Since $T_i/ T_{i+1}$ is rank one, we may choose $p_i$ and $q_i$ relatively prime, and $w_{i+1} \in T_{i+1}$ such that $a^{q_i} = a_i^{2p_i}w_{i+1}$.  But now $w_{i+1} \in \mathcal{A}_{i+1}$ and so there exists $q_{i+1}, p_{i+1}$ relatively prime and $w_{i+2} \in T_{i+2}$ such that $w_{i+1}^{q_{i+1}} = a_{i+1}^{2p_{i+1}}w_{i+2}$, and thus $a^{q_iq_{i+1}} = a_i^{2p_iq_{i+1}}a_{i+1}^{p_{i+1}}w_{i+2}$.  Proceeding by induction, we may write $a$ as a product of $\{a_i^2, \ldots, a_m^2\}$.  Linear indepence over $\mathbb{Z}$ is clear, since each $a_j^2$ is nonzero in exactly one of the quotients $T_j/T_{j+1}$ for $j =i, \ldots, m$.
\end{proof}

\vs

Proposition \ref{prop central} readily implies an algebraic decomposition  of $T$ which does not hold for any Tararin group but only for those who admits a finite order automorphism. Namely, that $T$ fits in the short exact sequence
$$0\to \mathcal A_0 \to T\to \Z_2\times\ldots\times\Z_2\to0,$$
where the $\Z_2$ factors are generated by $a_i$ ($i=0,\ldots,m-1$). Moreover, since $\mathcal A_0$ is a torsion free Abelian group of rank $m+1$, it may be identified with a subgroup of the vector space $\Q^{m+1}$. In this way any automorphisms admits a unique (up to identification) extension to a linear automorphism of $\Q^{m+1}$. For instance, if we identify $a_i^2\in \mathcal A_0$ with $e_{i+1}=(0\ldots,  0,1,0,\ldots,0)\in \Q^{m+1}$, then the conjugation action of $a_i$ on $\mathcal A_0$ extends to the involution 
$$ \sigma_i:(x_0,\ldots,x_m)\mapsto(x_0,\ldots,x_{i},-x_{i+1},\ldots,-x_m).$$




\subsection{Reduction to $T \rtimes \Z_2$ case}

To simplify the proof of Theorem \ref{teo tararin circ} in this subsection we reduce to the case where the Tararin subgroup has index 2.  We start with a simple observation. 

\begin{lem} Every finite order automorphism of $T$ has order two.
\label{lem finite order autos}
\end{lem}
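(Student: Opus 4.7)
The plan is to prove $\phi^2 = \id$ for every finite-order automorphism $\phi$ of the Tararin group $T$. The first step is to observe that the rational series $\{1\} = T_{m+1} \lhd T_m \lhd \ldots \lhd T_0 = T$ of Theorem \ref{teo tararin} is characteristic: since the property of being a rational series satisfying condition (2) of that theorem is automorphism-invariant, the series $\{1\} = \phi(T_{m+1}) \lhd \ldots \lhd \phi(T_0) = T$ also satisfies condition (2), so by the uniqueness clause in Tararin's theorem we must have $\phi(T_i) = T_i$ for every $i$.

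The second step is to observe that the induced automorphism $\phi_i$ of each quotient $T_i/T_{i+1}$ has order dividing two. Indeed, $T_i/T_{i+1}$ is rank-one torsion-free Abelian, hence embeds in $\Q$, and its automorphism group therefore embeds in $\Q^*$; the only finite-order elements of $\Q^*$ are $\pm 1$. Hence each $\phi_i$ is either the identity or inversion, and letting $\psi := \phi^2$, every induced map $\psi_i$ on $T_i/T_{i+1}$ is trivial.

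Finally, I would show by downward induction on $i$ that $\psi|_{T_i} = \id$. The base case $i = m$ is immediate because $T_{m+1} = \{1\}$ and $\psi_m = \id$. For the inductive step, given $x \in T_i$ we can write $\psi(x) = xw$ for some $w \in T_{i+1}$ (since $\psi$ acts trivially on $T_i/T_{i+1}$); applying $\psi$ again and using the inductive hypothesis $\psi(w) = w$, we get $\psi^k(x) = xw^k$ for all $k \geq 1$. Since $\psi$ has finite order and Tararin groups are torsion-free (by iterated extension of torsion-free Abelian quotients), this forces $w = 1$. So $\psi|_{T_i} = \id$, and taking $i = 0$ concludes $\phi^2 = \id$.

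The only substantive point is the first: that the canonical rational series of $T$ is characteristic. The rest is a routine downward induction exploiting the rank-one torsion-free Abelian structure of the successive quotients together with the torsion-freeness of $T$ itself. Observe that the lemma allows for $\phi$ to be the identity; the content is that there is no finite-order automorphism of order strictly greater than two.
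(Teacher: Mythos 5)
Your proof is correct. The first two steps coincide exactly with the paper's argument: the rational series is characteristic (by the uniqueness clause of Tararin's theorem), and since the torsion subgroup of $\mathrm{Aut}(T_i/T_{i+1})$ has order two for a rank-one torsion-free Abelian quotient, $\psi=\phi^2$ acts trivially on every successive quotient. Where you diverge is in the concluding step. The paper observes, via the explicit description of all left orders on a Tararin group (each determined by a choice of sign on each quotient $T_i/T_{i+1}$), that $\psi$ preserves every left order on $T$, and then derives a contradiction from the infinite monotone chain $g<\psi(g)<\psi^2(g)<\cdots$ when $\psi\neq\id$. You instead run a downward induction on $i$: writing $\psi(x)=xw$ with $w\in T_{i+1}$ fixed by $\psi$, you get $\psi^k(x)=xw^k$, and finiteness of the order of $\psi$ together with torsion-freeness of $T$ forces $w=1$. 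Your route is slightly more elementary in that it needs only torsion-freeness of $T$ (immediate from the iterated extension structure) rather than the classification of left orders on Tararin groups; the paper's route is shorter given that the order classification has already been recorded as a remark. Both arguments ultimately rest on the same two facts, so this is a genuine but modest variation, and either proof is acceptable.
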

\begin{proof} Let $\psi$ be a finite order automorphism of $T$.  Since $T_i \subset T$ are characteristic subgroups, $\psi$ induces an automorphism of $T_i/T_{i+1}$.  As observed in the previous section, the torsion part of $\mathrm{Aut}(T_i/T_{i+1})$ has order 2, so $\psi^2$ acts trivially on each $T_i/T_{i+1}$.  It follows from Remark \ref{rem orders on Tararins} that $\psi^2$ preserves each left orders on $T$.  

If $\psi^2$ is not the identity, then for any left order on $T$, there is some $g \in T$ such that $g < \psi^2(g)$.   Since $\psi^2$ preserves $<$, we have then $g<\psi^2(g)<\ldots < \psi^{2n}(g)$ for any $n$, contradicting the fact that $\psi$ has finite order.   
\end{proof}

\begin{prop} \label{prop finite to one}
Let $G = T\rtimes \Z_n$, where $T$ is a Tararin group, and $z$ is a generator of $\Z_n$ acting by inversion on $T/T_1$.  
Then $z^2$ is central in $G$, and there is a finite-to-one correspondence between circular orders on $G$ and circular orders on $G/\langle z^2 \rangle$
\end{prop}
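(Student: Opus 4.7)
I would split the argument into two essentially independent pieces corresponding to the two claims. For centrality of $z^2$, the plan is to observe that Corollary~\ref{cor useful} immediately gives that $z^2$ acts trivially on every quotient $T_i/T_{i+1}$ of the convex series of $T$: each element of $G$ acts on such a quotient either trivially or by inversion, and both squares are the identity. Thus $\mathrm{Inn}_{z^2}|_T$ is a finite-order automorphism of the Tararin group $T$ that is trivial on every successive quotient of its unique rational series. By Remark~\ref{rem orders on Tararins}, every left-order on $T$ is determined by a choice of orientations on these quotients, so $\mathrm{Inn}_{z^2}|_T$ preserves every left-order on $T$. The argument in the proof of Lemma~\ref{lem finite order autos} then shows that a finite-order order-preserving automorphism of a left-ordered group must be trivial. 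Hence $z^2$ centralizes $T$ and, trivially, $z$, so $z^2 \in Z(G)$.

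For the correspondence, I would work in the left-ordered lift $(\widetilde{G}, <)$ provided by Construction~\ref{construction lift}, with its positive central cofinal element $\zeta = (\id, 1)$. Pick the lift $(z^2, 0) \in \widetilde G$. Since $z^2$ is central of order $n/2$ in $G$, the subgroup $\widetilde N := \langle \zeta,\, (z^2, 0) \rangle \subset \widetilde G$ is abelian (as $\zeta$ is central in $\widetilde G$), torsion-free (being a subgroup of the left-orderable $\widetilde G$), and of rank one (because $(z^2, 0)^{n/2}$ projects to $\id \in G$, hence lies in $\langle \zeta \rangle$); so $\widetilde N \cong \Z$. Let $\eta$ be its positive generator; then $\zeta = \eta^{n/2}$. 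Normality of $\widetilde N$ (a consequence of centrality of $z^2$) implies that the conjugation action of $\widetilde G$ on $\widetilde N \cong \Z$ factors through $\mathrm{Aut}(\Z) = \{\pm 1\}$, and since this action must fix $\zeta = \eta^{n/2}$, it must also fix $\eta$. Thus $\eta$ is positive, central, and cofinal in $\widetilde G$, so Construction~\ref{construction quotient} applied to $(\widetilde G, <)$ with $\eta$ produces a circular order $\bar c$ on $\widetilde G / \langle \eta \rangle = G/\langle z^2 \rangle$. This defines the map $\Psi : \CO(G) \to \CO(G/\langle z^2 \rangle)$, $c \mapsto \bar c$.

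To see that $\Psi$ has finite fibers, the plan is to show that the refined map
\[
\CO(G) \longrightarrow \CO(G/\langle z^2\rangle) \times \CO(\langle z^2\rangle),\qquad c \longmapsto (\bar c,\ c|_{\langle z^2\rangle})
\]
is injective. For a triple whose three elements lie in a common $\langle z^2 \rangle$-coset, the value of $c$ on the triple is determined by $c|_{\langle z^2\rangle}$ via left-invariance. For a triple whose elements lie in three distinct cosets, the value of $c$ can be reconstructed from $\bar c$ together with the information of how the lifts $\tilde g_i \in [\id, \zeta)$ distribute among the windows $[\eta^k, \eta^{k+1})$; this distribution is itself encoded by $c|_{\langle z^2\rangle}$. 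Since $\langle z^2 \rangle$ is finite cyclic of order $n/2$ with $|\CO(\langle z^2\rangle)| = \phi(n/2)$, the fibers of $\Psi$ have size at most $\phi(n/2)$.

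The main obstacle is this last injectivity step: one must carefully verify that the ``window-index'' data --- the integers $k_i$ distinguishing the minimal representative modulo $\langle \zeta \rangle$ from the minimal representative modulo $\langle \eta \rangle$ --- is genuinely recoverable from $\bar c$ and $c|_{\langle z^2 \rangle}$, rather than encoding information independent of those two. The centrality half of the argument, by contrast, is essentially mechanical once Corollary~\ref{cor useful} and the order-preserving argument from the proof of Lemma~\ref{lem finite order autos} are invoked.
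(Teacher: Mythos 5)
Your centrality argument is correct and essentially matches the paper's, which deduces it from Lemma \ref{lem finite order autos} (every finite-order automorphism of $T$ has order two); the proof of that lemma is exactly the ``trivial on each $T_i/T_{i+1}$ $\Rightarrow$ preserves every left order $\Rightarrow$ trivial'' argument you sketch. Your construction of $\Psi$ through $\widetilde G$ and the element $\eta$ with $\eta^{n/2}=\zeta$ is also sound, and unwinds to the same map the paper defines directly via the minimal representatives $g\in\bar g$ with $c(\id,g,z_0^2)=1$.

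The gap is exactly the step you flagged: the refined map $c\mapsto(\bar c,\,c|_{\langle z^2\rangle})$ is \emph{not} injective, and this cannot be repaired. The window function $r\colon G\to\Z_{n/2}$ recording which window $[\eta^k,\eta^{k+1})$ contains the minimal lift of $g$ satisfies $r(gh)=r(g)+r(h)+f_{g,h}$ with $f_{g,h}$ depending only on $\bar c$; hence two circular orders inducing the same $\bar c$ (and the same cyclic ordering of $\langle z\rangle$) have window functions differing by an arbitrary homomorphism $\psi\in\Hom(G,\Z_{n/2})$, and conversely every such $\psi$ yields an order in the fiber via $c'(g,h,k)=c(gz_0^{2\psi(g)},hz_0^{2\psi(h)},kz_0^{2\psi(k)})$. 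Knowing $c|_{\langle z^2\rangle}$ only pins down $r$ on $\langle z^2\rangle$, i.e.\ only excludes those $\psi$ that are nonzero on $\langle z^2\rangle$; homomorphisms vanishing on $\langle z^2\rangle$ but not on $T$ survive and change the window indices of elements of $T$ without touching either piece of your data. Concretely, for $G=\Z\rtimes\Z_4=\langle a\rangle\rtimes\langle z\rangle$ one has $G^{ab}\cong\Z_2\times\Z_4$, so $\psi(a)=1,\ \psi(z)=0$ defines a nonzero homomorphism to $\Z_{n/2}=\Z_2$ killing $z^2$; the resulting $c'\neq c$ has $\bar c'=\bar c$ and (vacuously) the same restriction to the two-element group $\langle z^2\rangle$. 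The fiber there has $\phi(4)\cdot|\Hom(G,\Z_2)|=8$ elements, not $\phi(2)=1$. The finite-to-one conclusion does still follow from your setup, but the finiteness must be extracted from the finiteness of $\Hom(G,\Z_{n/2})$ (note $G^{ab}$ is finite, since each $T_i/T_{i+1}$ is inverted by some element of $G$ and hence becomes $2$-torsion in $G^{ab}$) together with the $\phi(n)$ choices of positively ordered generator of $\langle z\rangle$ --- which is precisely the accounting the paper carries out --- rather than from injectivity of your refined map.
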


\begin{proof} The fact that $z^2$ is central follows from the Lemma \ref{lem finite order autos}.
Now, following a strategy similar to that in Construction \ref{construction quotient}, for each circular order on $G$ we can produce a  circular order on $ \bar{G} = G/\langle z^2 \rangle$ as follows.  Given a circular order $c$ on $G$, let $z_0 \in \langle z \rangle$ be the unique generator of $\langle z \rangle$ such that $id, z_0, z_0^2, ... , z_0^{n-1}$ is in positive cyclic order.   For $id \neq \bar{g} \in \bar{G}$, say that its \emph{minimal representative under c} is the unique element $g \in \bar{g}$ such that $c(id, g, z_0^2) = 1$, and say that $\id$ is the minimal representative for $\id \in \bar{G}$.    Now define a circular order $\bar{c}$ on $\bar{G}$ by 
$\bar{c}(\bar{g}, \bar{h}, \bar{k}) = c(g_{min}, h_{min}, k_{min})$ where $g_{min}$ denotes the minimal representative of $g$.  It is easily verified that this gives a well-defined circular order.  

We now claim that the map $c \mapsto \bar{c}$ is finite-to-one, more precisely, the number of distinct circular orders on $G$ that induce a given order $\bar{c}$ by the construction above is given by $\phi(n) | \Hom(G, \Z_{n/2}) |$, where $\phi$ is Euler's totient function.    To see this, fix $c \in \CO(G)$.  Let $\bar{c}$ be obtained by the construction above, and let $z_0$ be the generator of $\Z_n$ appearing in the construction.   For each $g \in G$, let $r(g) \in \Z_{n/2}$ be the integer such that $g = g_{min} z_0^{2r(g)}$.  

Consider first the other circular orders $c' \in \CO(G)$ such that $\bar{c}' = \bar{c}$, and such that $e, z_0, z_0^2, ... , z_0^{n-1}$ are in positive cyclic order under $c'$.  Let $r'(g) \in \Z_{n/2}$ be the integer such that $g = g_{min'} z_0^{2r'(g)}$, where $g_{min'}$ is the minimal representative of $g$ under $c'$.  
Let $\psi: G \to  \Z_{n/2}$ be given by $g \mapsto r(g) - r'(g)$.  We claim that this is a homomorphism.   Indeed, similar to Construction \ref{construction lift}, we have $r(gh) = r(g) + r(h) + f_{g,h}$ where 
$$f_{g,h} = \left\{\begin{array}{cl} 0 & \text{if $g=\id$ or $h=\id$ or $\bar{c}( \id, g, gh)=1$}
\\ 1 &  \text{if $gh=id$ $(g\not=id)$ or $\bar{c}(\id,gh,g)=1$.  }  \end{array}
\right.$$ 
and $r'(gh) = r'(g) + r'(h) + f_{g,h}$ for the same function $f$ (which depended only on $\bar{c}$).  

Note that this homomorphism $\psi$ is uniquely determined by $c'$;  if $c''$ is a different other such order, then $r'' \neq r'$, so the homomorphisms constructed as above will differ.   Moreover, given any homomorphism $\psi: G \to \Z_{n/2}$, one may define a circular order $c'$ on $G$ by 
$$c'(g, h, k) = c(g z_0^{2\psi(g)}, h z_0^{2\psi(h)}, k z_0^{2\psi(h)}).$$
Then $\bar{c}' = \bar{c}$, and applying the construction above recovers $\psi$.  
This shows that there is a bijective correspondence between $\Hom(G, \Z_{n/2})$ and the set of orders on $G$ inducing $\bar{c}$ on $G/z^2$ with the property that $e, z_0, z_0^2, ... , z_0^{n-1}$ are in positive cyclic order.  

Finally, since there are $\phi(n)$ distinct circular orders on $\Z_n$ and hence $\phi(n)$ possibilities for choice of $z_0$, this proves the claim.  
\end{proof}

Thus, in order to prove Theorem \ref{teo tararin circ}, it suffices to consider groups of the form $G = T\rtimes \Z_2$.

\subsection{Actions on the circle}

In order to understand all circular orders on $G=T\rtimes \Z_2$, we analyze faithful actions of $G=T\rtimes \Z_2$ on the circle. The main result of this subsection is Corollary \ref{kernel cor} which, together with Corollary \ref{ker loc ind}, gives an algebraic description of the linear part of a circular order on $G$.

To pass from orders to actions we make use of the following folklore proposition, which we hinted at in the introduction.   A proof can be found in \cite{MR}. For its statement recall from Section \ref{sec preliminaries} the natural {\em orientation cocycle}  $\ord:S^1\times S^1\times S^1\to \{\pm 1,0\}$. 
\begin{prop} \label{prop dyn real}
Let $G$ be any countable group.  Then $G$ is circularly orderable if and only if $G$ acts faithfully on the circle by orientation-preserving homeomorphisms.  Moreover, for each circular order $c$ on $G$, there is a canonical (up to conjugacy) faithful \emph{dynamical realization} homomorphism $\rho_c: G \to \Homeo_+(S^1)$ with the property that there is a {\em base point} $p\in S^1$ such that
$$c(g_1,g_2,g_3)=\ord(\rho_c(g_1)(p),\rho_c(g_2)(p),\rho_c(g_3)(p)).$$   
\end{prop}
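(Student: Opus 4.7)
The plan is to prove both directions of the biconditional and then describe the canonical dynamical realization. For the easy direction (action $\Rightarrow$ order), suppose $\rho:G\to\Homeo_+(S^1)$ is faithful. I would first replace $\rho$ by a semiconjugate model in which no nontrivial element fixes an open interval: collapse the closure of the union of intervals pointwise fixed by some $g\ne\id$; since $G$ is countable this gives a quotient that is again homeomorphic to $S^1$ and on which the action is faithful and has the property that each $\mathrm{Fix}(\rho(g))$, $g\ne\id$, is closed with empty interior. Baire category then produces a point $p\in S^1$ whose $G$-stabilizer is trivial, and setting $c(g_1,g_2,g_3):=\ord(\rho(g_1)p,\rho(g_2)p,\rho(g_3)p)$ yields a function valued in $\{\pm 1,0\}$ that is automatically left-invariant (since $\rho$ is orientation-preserving and $\ord$ is $\Homeo_+(S^1)$-invariant), satisfies $c^{-1}(0)=\triangle(G)$ (by choice of $p$), and satisfies the cocycle identity because $\ord$ does.

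For the harder direction I would apply Construction \ref{construction lift} to $(G,c)$ to obtain a left-ordered central extension $(\widetilde G,<)$ by $\Z=\langle z\rangle$, with $z$ positive, central and cofinal. Enumerate $\widetilde G=\{g_0,g_1,\dots\}$ with $g_0=\id$ and, using cofinality of $z$ and the left-order $<$, construct inductively an order-preserving injection $t:\widetilde G\to\R$ satisfying $t(z^n g)=n+t(g)$ for all $n\in\Z$, $g\in\widetilde G$ (in particular $t(z^n)=n$). At each stage $g_k$ is placed at any real number in the interval predicted by $<$ relative to $\{t(g_0),\dots,t(g_{k-1})\}$; the $\langle z\rangle$-equivariance constraint is respected since $z$ is cofinal central. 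Left multiplication on $t(\widetilde G)$ extends by monotone interpolation to a homomorphism $\tilde\rho:\widetilde G\to\Homeo_+(\R)$ in which $z$ acts as integer translation; faithfulness is immediate because the action is free on $t(\widetilde G)$. Since $\langle z\rangle$ is central and acts by $\Z$-translations, $\tilde\rho$ descends to $\rho_c:G\to\Homeo_+(\R/\Z)=\Homeo_+(S^1)$, still faithful, and taking $p:=0+\Z$ as basepoint one reads off the desired identity $c(g_1,g_2,g_3)=\ord(\rho_c(g_1)p,\rho_c(g_2)p,\rho_c(g_3)p)$: the three real lifts $t(\tilde g_i)$ with $\tilde g_i$ the minimal coset representative of $g_i$ lie in $[0,1)$ in the order determined by $<$, and Construction \ref{construction quotient} identifies this with $c$.

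For the ``canonical up to conjugacy'' statement, I would observe that two realizations arising from two choices of enumeration of $\widetilde G$ produce two order-preserving embeddings $t_1,t_2:\widetilde G\hookrightarrow\R$ that are both $\langle z\rangle$-equivariant and have closure invariant under $\widetilde G$. The uniquely defined map $t_1(g)\mapsto t_2(g)$ extends to an orientation- and $\Z$-translation-preserving homeomorphism of $\R$, which descends to a homeomorphism of $S^1$ conjugating the two dynamical realizations.

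The main obstacle is the backward direction's base-point argument: controlling the possibility that some $g$ fixes an interval. I would handle this by the collapse construction above, which is standard but must be set up carefully so the resulting action remains faithful; in the forward direction the mild technical point is to run the inductive enumeration of $\widetilde G$ in such a way that $\langle z\rangle$-equivariance is maintained at every stage, which is guaranteed by cofinality and centrality of $z$.
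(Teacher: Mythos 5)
Your forward direction (circular order $\Rightarrow$ action) is sound and is essentially the argument the paper defers to \cite{MR}: lift via Construction \ref{construction lift} to a left-ordered central extension, build a $\langle z\rangle$-equivariant order-preserving embedding $t:\widetilde G\to\R$, extend to an action on $\R$ commuting with integer translation, and descend to $S^1$; your conjugacy argument for canonicity is also the standard one. The genuine gap is in the converse. Your plan to collapse the closure of $U=\bigcup_{g\neq\id}\mathrm{int}(\mathrm{Fix}(\rho(g)))$ and then extract a free point by Baire category fails: $U$ can be dense in $S^1$, or even all of $S^1$, in which case $\overline U$ has a single connected component and the proposed quotient is a point, not a circle. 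Thompson's group $T$ with its standard action is a concrete counterexample: every $x\in S^1$ lies in the interior of the fixed set of some nontrivial element (take any element supported in an arc missing $x$), so $U=S^1$; moreover every point stabilizer is nontrivial, so no collapsing procedure (which only shrinks the circle) can manufacture a point with trivial stabilizer. Even when $\overline U\neq S^1$, faithfulness of the quotient can fail, since a nontrivial element whose support lies inside $\overline U$ becomes trivial downstairs. The structural issue is that producing a free point requires \emph{blowing up} an orbit rather than collapsing, and the inserted intervals are acted on by the (possibly nontrivial) point stabilizers, so one cannot place a free point there without already knowing how to order those stabilizers --- which makes the free-point route circular.

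The standard repair, consistent with the rest of your write-up, avoids hunting for free points on $S^1$: pass to the preimage $\widetilde G\leq\Homeo_{\Z}(\R)$ of $G$ in the group of homeomorphisms of $\R$ commuting with $x\mapsto x+1$. This is a countable subgroup of $\Homeo_+(\R)$, hence left-orderable by the lexicographic recipe along a countable dense sequence of basepoints $(x_n)$ with $x_1=0$ (declare $g>\id$ iff $g(x_n)>x_n$ at the first index $n$ with $g(x_n)\neq x_n$). With this choice the translation $z$ is positive, central and cofinal, and Construction \ref{construction quotient} then yields a left-invariant circular order on $G=\widetilde G/\langle z\rangle$. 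This is the argument you should substitute for the collapsing step; the rest of your proposal can stand.
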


Thus, it makes sense to talk about the dynamics of $\rho_c$ given a circular order $c$.

\paragraph{Amenable groups and rotation number.} 
There is essentially one dynamical invariant of circle homeomorphisms; this is the \emph{rotation number} of Poincar\'e.  

\begin{defi}
Identify $S^1$ with $\R/\Z$ and let $g \in \Homeo(S^1)$.   The $\R/\Z$-valued \emph{rotation number} of $g$ is defined by 
$$\rot(g) :=  \lim \limits_{n \to \infty} \frac{\tilde{g}^n(x)}{n} \text{ mod } \Z$$  
where $\tilde{g}$ is any lift of $g$ to a homeomorphism of $\R$ and $x$ is any point in $\R$.  (In particular, this limit exists and is independent of $x$.)
\end{defi} 

In general, $\rot$ is not a homomorphism, however it is a homomorphism when restricted to actions of amenable groups.  Precisely, we have:

\begin{prop}\label{prop amenable rot}
If $G$ is an amenable group acting faithfully on the circle by homeomorphisms, then $\rot: G \to \R/\Z$ is a homomorphism.   The kernel of $\rot$ acts with a global fixed point.  
\end{prop}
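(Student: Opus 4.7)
The plan is to exploit the equivalence between amenability of a group action on a compact space and the existence of an invariant probability measure. First I would use amenability of $G$ together with compactness of $S^1$ to produce a Borel probability measure $\mu$ on $S^1$ that is invariant under the $G$-action, via the standard averaging argument applied to an invariant mean on $\ell^\infty(G)$.

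Given such a $\mu$, a candidate homomorphism $\tau: G \to \R/\Z$ is defined by integrating displacements of lifts: for $g \in G$ with any lift $\tilde g \in \Homeo_+(\R)$, set $\tau(g) = \int_{S^1}(\tilde g(x) - x)\, d\mu(x) \bmod \Z$. Changing the lift alters $\tilde g - \mathrm{id}$ by an integer, so $\tau$ is well-defined. The identity $\widetilde{gh}(x) - x = (\tilde g(\tilde h(x)) - \tilde h(x)) + (\tilde h(x) - x)$, together with the change of variables $y = \tilde h(x)$ and the $G$-invariance of $\mu$, yields $\tau(gh) = \tau(g) + \tau(h)$. A standard Poincar\'e-style argument (a telescoping computation, or Birkhoff's ergodic theorem applied to $\tilde g - \mathrm{id}$) then identifies $\tau(g)$ with $\rot(g)$, giving the first claim.

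For the statement about $\ker(\rot)$, I would use the Poincar\'e semi-conjugacy associated with $\mu$. Fix a basepoint $p \in S^1$ and define $\Phi: S^1 \to \R/\Z$ by $\Phi(x) = \mu([p,x)) \bmod \Z$. This is a degree-one monotone map satisfying $\Phi \circ g = R_{\tau(g)} \circ \Phi$ for all $g \in G$, and it is strictly monotone, hence injective, when restricted to $\mathrm{supp}(\mu)$ (the plateaus of $\Phi$ are precisely the closures of the components of $S^1 \setminus \mathrm{supp}(\mu)$). Now if $g \in \ker(\rot)$, then $\Phi \circ g = \Phi$; since $g$ preserves $\mathrm{supp}(\mu)$ by invariance, the injectivity of $\Phi|_{\mathrm{supp}(\mu)}$ forces $g(x) = x$ for every $x \in \mathrm{supp}(\mu)$. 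Since $\mathrm{supp}(\mu)$ is nonempty, this exhibits a global fixed point for $\ker(\rot)$.

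The main technical points requiring care, neither of them deep, are the identification $\tau = \rot$ and the verification that $\Phi$ is genuinely a semi-conjugacy. Both are essentially bookkeeping with lifts to $\R$ and the definition of $\mu([p,x))$, and the overall argument is classical; I would cite Ghys' survey or Navas' book for the technical details rather than reproduce them in full.
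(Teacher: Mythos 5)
The paper does not prove this proposition; it simply notes that it follows from the existence of a $G$-invariant Borel probability measure and cites Ghys. Your sketch fills in exactly that argument, and the first half (averaging to get $\mu$, the displacement integral $\tau$, well-definedness mod $\Z$, the cocycle identity plus invariance of $\mu$ giving additivity, and the telescoping/Birkhoff identification $\tau=\rot$) is correct and is the standard proof.

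There is, however, one step in the second half that fails as literally stated: the claim that $\Phi(x)=\mu([p,x))$ is injective on $\mathrm{supp}(\mu)$. If $(a,b)$ is a connected component of $S^1\setminus\mathrm{supp}(\mu)$, then both endpoints $a,b$ lie in $\mathrm{supp}(\mu)$, yet $\Phi(b)-\Phi(a)=\mu([a,b))=\mu(\{a\})$, which is $0$ when $\mu$ is non-atomic; so $\Phi$ identifies the two endpoints of every gap. Your own parenthetical --- that the plateaus of $\Phi$ are the \emph{closures} of the gaps --- already says this, and it contradicts the injectivity you then invoke. Consequently $\Phi\circ g=\Phi$ does not directly force $g$ to fix every point of $\mathrm{supp}(\mu)$. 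The repair is routine but should be said: if $\mu$ has an atom, its $G$-orbit is finite and every $g\in\ker(\rot)$ fixes each point of that orbit (an orientation-preserving homeomorphism permuting a finite invariant set cyclically has rotation number $k/n$, which vanishes only for the trivial permutation). If $\mu$ is non-atomic and $\mathrm{supp}(\mu)\neq S^1$, one checks that distinct gaps have distinct $\Phi$-values (the arc between two gaps contains a one-sided neighborhood of a support point, hence has positive measure), so each $g\in\ker(\rot)$ maps every gap to itself and therefore fixes its endpoints, giving a common fixed point. Finally, if $\mu$ is non-atomic with full support, $\Phi$ is a homeomorphism and $\ker(\rot)$ is trivial. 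With this case analysis inserted, your argument is complete.
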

\noindent This follows from the existence of a $G$-invariant Borel probability measure on $S^1$. See \cite[Prop 6.17]{Ghys} for a proof.

Recall that a group is \emph{locally indicable} if every finitely generated subgroup admits a surjective homomorphism to $\Z$. An important theorem of Witte-Morris \cite{morris} states that any amenable group that acts faithfully by orientation preserving homeomorphisms on the line is locally indicable.  When applied in conjunction with Proposition \ref{prop amenable rot}, this gives:

\begin{coro}  \label{ker loc ind}
Let $G$ be an amenable group with a circular order, and let $G$ act on $S^1$ via the dynamical realization.  Then $\ker(\rot)$ is the linear part of the order, and $\ker(\rot)$ is a locally indicable group.  
\end{coro}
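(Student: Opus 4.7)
The plan is to chain Proposition \ref{prop amenable rot} with the Witte-Morris theorem mentioned just above. By Proposition \ref{prop amenable rot}, amenability of $G$ gives that $\rot: G \to \R/\Z$ is a homomorphism and that $K := \ker(\rot)$ has a global fixed point $q \in S^1$. The complement $S^1 \setminus \{q\} \cong \R$ is then a $K$-invariant line, so $K$ embeds into $\Homeo_+(\R)$ and is left-orderable; amenability of $K$ as a subgroup of $G$ together with Witte-Morris then forces $K$ to be locally indicable.

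For the identification of $K$ with the linear part $H$ of $c$, I would argue both containments. For $H \subseteq K$: the convex subgroup $H$ must act on $S^1$ with $H \cdot p$ contained in a proper arc whose endpoints are fixed by $H$, so every $h \in H$ has a fixed point on $S^1$ and hence rotation number zero. For $K \subseteq H$: the plan is to verify $K$ is itself convex; then maximality of $H$ closes the argument. The left-ordered-by-restriction condition follows because $K$ acts orientation-preservingly on the arc $S^1 \setminus \{q\}$, so the positive cone $\{k \in K : k \cdot p > p\}$ makes sense and is compatible with the restriction of $c$. The betweenness property reduces to showing that if $k_1, k_2 \in K$ and $g \in G$ satisfy $c(k_1, \id, k_2) = c(k_1, g, k_2) = 1$, then $g \cdot p$ sits in the same $K$-invariant component as $p$, which forces $g \in K$.

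The main obstacle I anticipate is the betweenness verification, which is sensitive to the choice of the fixed point $q$. If $K$ fixes additional points of $S^1$ beyond $q$, an incautiously chosen $q$ would allow the arc from $k_1 \cdot p$ to $k_2 \cdot p$ through $p$ to detour through a component not touched by $K \cdot p$, admitting elements $g \notin K$ and spoiling the argument. The natural fix is to take $q$ to be an endpoint of $\overline{K \cdot p}$, so that the arc through $p$ cannot escape the $K$-invariant component, and any $g$ with $g \cdot p$ in this arc is forced to fix $q$. With this choice made, the remaining steps are bookkeeping using the definition of cyclic order and the relationship $c(g_1,g_2,g_3) = \ord(\rho_c(g_i)(p))$ supplied by Proposition \ref{prop dyn real}.
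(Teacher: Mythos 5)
Your overall strategy is the paper's: identify $K=\ker(\rot)$ with the linear part by showing $K$ is convex and maximal, and get local indicability from Proposition \ref{prop amenable rot} plus Witte--Morris. The local indicability part and the containment $H\subseteq K$ (elements of a proper convex subgroup have fixed points, hence zero rotation number) are fine; the paper instead proves maximality of $K$ directly via the semi-conjugacy of the $G$-action to a faithful action of $G/K$ by rotations, but your route through $H\subseteq K$ is an acceptable substitute.

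The gap is in the convexity verification, at precisely the step you flag and then wave through: ``any $g$ with $g\cdot p$ in this arc is forced to fix $q$.'' Choosing $q$ to be an endpoint of $\overline{K\cdot p}$ does not by itself rule out an element $g$ with $\rot(g)\neq 0$ that happens to send $p$ into the $K$-invariant component $I$ containing $p$; nothing in your argument prevents such a $g$ a priori, and this implication is the actual content of convexity, not bookkeeping. The missing ingredient is that $K$ is \emph{normal} in $G$ (it is the kernel of the homomorphism $\rot$), so $\mathrm{Fix}(gKg^{-1})=\mathrm{Fix}(K)$ and $g$ permutes the connected components of $S^1\setminus\mathrm{Fix}(K)$. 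Hence if $g\cdot p\in I$ then $g(I)$ and $I$ are components of the same open set with a common point, so $g(I)=I$; since $g$ preserves orientation it fixes the endpoints of $I$, giving $\rot(g)=0$ and $g\in K$. (An alternative patch uses the $G$-invariant probability measure $\mu$ defining $\rot$: one checks $\mu(I)=0$, so $g\cdot p\in I$ forces $\rot(g)=\mu([p,g\cdot p))=0$.) This is exactly how the paper's proof closes the argument, so the fix is short, but as written your proposal asserts the crucial implication rather than proving it.
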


\begin{proof} 
Let $c$ be a circular order on $G$, and let $G$ act on $S^1$ by its dynamical realization with basepoint $p$.  
Let $K = \ker(\rot)$, and let $I$ be the open component of $S^1\setminus\{x\mid K(x)=x\} $ that contains $p$. Since $K$ is normal in $G$, for every $g\in G$ we have that $g(I)\cap I$ is either $I$ or the empty set.  It now follows easily from the definitions that $K$ is a convex subgroup of $G$.  Maximality follows since the action of $G$ is semi-conjugate to a faithful action of $G/K$ by rotations (as in the proof of Proposition \ref{prop amenable rot} in \cite{Ghys}), and so there are no convex subgroups containing $K$.   Local indicability follows from Witte-Morris' theorem, since $K$ acts faithfully with a global fixed point.  
\end{proof}

This is particularly relevant to our situation, because solvable groups are always amenable -- in particular, this holds for our (solvable) group $G = T \rtimes \Z_n$.

\paragraph{Tararin groups and convex subgroups.} 
The key observation in the proof that a Tararin group $T$ has only finitely many linear orders is that $T_1$ is always a \emph{convex subgroup}.  One might expect, by analogy, that in a circular order on $G = T\rtimes \Z_2$, the group $T$ would always be convex---if this were true, the proof of  Theorem \ref{teo tararin circ} would be much simpler.   However, the following example shows that $T$ is not necessarily a convex subgroup.  

\begin{exam}\label{ex weird tararin} Let $G=\langle a,b,c\mid c^2, aba^{-1}=b^{-1}, cac^{-1}=a^{-1}b\rangle \cong K_2\rtimes_{-1} \Z_2$, where $K_2=\langle a,b\mid aba^{-1}=b^{-1}\rangle$ is the Klein bottle group (in particular, it is a Tararin group with series $\{1\} \lhd \langle \langle b \rangle \rangle \lhd K_2$). We exhibit a circular order on $G$ whose linear part is not $\langle a,b\rangle$.

To do that, note that the map from $G$ to $\{0,1\}$ sending $a,c \mapsto 1$ and $b\to 0$ extends to a homomorphism form $G$ to $\Z_2$. Let $H$ be the kernel, then $H$ is generated by $a^2, b, ca$. We claim that $H$ is left orderable (so a circular order on $G$ with the desired properties can be constructed using the converse of Proposition \ref{prop extensions}(1)).

Observe that $caca=b^{-1}$, so $H$ is in fact generated by $ca$ and $a^2$. Further, the subgroup $H_1$ generated by $a^2$ is invariant under conjugation by $ca$, as we compute $ca a^2 (ca)^{-1}=a^{-2}$. One can check that $H$ is therefore a Tararin group isomorphic to $K_2$, with series $\{id\}\lhd H_1\lhd H$. 
\end{exam}

Despite this problem, our strategy for the proof of Theorem \ref{teo tararin circ} is to show that the linear part (i.e. maximal convex subgroup) of any circular order on a group of the form $T\rtimes \Z_2$ is indeed a Tararin group, even though it may not be the subgroup $T$.   We begin by studying the constraints on rotation numbers for actions of $G$ on $S^1$.  

\begin{lem} \label{lem inversion} Suppose that the group $G=T\rtimes \Z_2$ acts faithfully on $S^1$ by order preserving homeomorphisms. \begin{enumerate} 
\item If $a\in T$ satisfies that $cac^{-1}=a^{-1}$ for some $c\in G$ of order two, then $rot(a)=0$.
\item In general $rot(G)\subseteq \{0,1/2\}$.
\end{enumerate}

\end{lem}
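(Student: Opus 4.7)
The plan is to exploit Proposition \ref{prop amenable rot}: since $G=T\rtimes\Z_2$ is solvable, hence amenable, $\rot:G\to\R/\Z$ is a group homomorphism. I will derive both assertions from algebraic relations on rotation numbers combined with a single dynamical input about involutions of the circle, namely that any nontrivial orientation-preserving involution of $S^1$ is fixed-point-free (a fixed point would restrict the involution to an order-preserving involution of an open arc, which must be the identity).

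For part (1), the relation $cac^{-1}=a^{-1}$ combined with $\rot$ being a homomorphism immediately gives $2\rot(a)=0$ in $\R/\Z$, so $\rot(a)\in\{0,1/2\}$; the substance is to exclude the value $1/2$. Since $c$ has order two in $G$ and $G$ acts faithfully, the observation above forces $\rot(c)=1/2$. The relations $c^2=id$ and $cac^{-1}=a^{-1}$ imply $(ca)^2=id$. If $\rot(a)$ were $1/2$, then $\rot(ca)=0$, and the same dynamical observation applied to $ca$ would force $ca=id$, so $a=c^{-1}=c$ would have order two in $G$; this contradicts $a\in T$ together with $T$ being torsion-free (as $T$ is left-orderable).

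For part (2), the plan is to show $2\rot(g)=0$ for every $g\in G$. I would fix an inverting base $\{c,a_0,\ldots,a_m\}$ of $G$, provided by the existence proposition preceding Proposition \ref{prop central}. By Proposition \ref{prop central}(3), every $x\in\mathcal A$ satisfies $cxc^{-1}=x^{-1}$, so part (1) gives $\rot(x)=0$; hence $\mathcal A\subseteq\ker(\rot)$. The short exact sequence displayed just after Proposition \ref{prop central}, namely $0\to\mathcal A\to T\to\Z_2\times\cdots\times\Z_2\to 0$, shows that $T/\mathcal A$ has exponent two, so $t^2\in\mathcal A$ and therefore $2\rot(t)=0$ for every $t\in T$. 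Since $c^2=id$ also gives $2\rot(c)=0$, writing any $g\in G$ as $g=c^\epsilon t$ with $\epsilon\in\{0,1\}$ and $t\in T$ yields $2\rot(g)=2\epsilon\rot(c)+2\rot(t)=0$, whence $\rot(g)\in\{0,1/2\}$.

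The chief difficulty lies in part (1): although the congruence $2\rot(a)\equiv 0\pmod{\Z}$ is immediate from the homomorphism property, upgrading it to the sharp equality $\rot(a)=0$ requires both the dynamical observation about involutions and the torsion-freeness of $T$. Once part (1) is established, part (2) is almost formal, essentially a repackaging of the algebraic structure of $\mathcal A$ provided by Proposition \ref{prop central}.
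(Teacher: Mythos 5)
Your proof is correct and follows the same skeleton as the paper's: amenability of $G$ makes $\rot$ a homomorphism, the relation $cac^{-1}=a^{-1}$ forces $\rot(a)\in\{0,1/2\}$, and the value $1/2$ is excluded by observing that $(ca)^2=\id$ while $\rot(ca)=0$, which forces $ca=\id$ and hence the absurdity $a=c$. The one genuine difference is how you rule out a nontrivial torsion element with rotation number zero: the paper invokes Corollary \ref{ker loc ind}, i.e.\ that $\ker(\rot)$ is locally indicable (via Witte-Morris) and hence torsion-free, whereas you use the elementary fact that a nontrivial orientation-preserving involution of $S^1$ is fixed-point-free (an increasing involution of an interval is the identity) and therefore has nonzero rotation number. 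Your version is self-contained and avoids the heavy machinery at this step; the paper's version is no harder given that Corollary \ref{ker loc ind} is needed elsewhere anyway, and in fact torsion-freeness of $\ker(\rot)$ already follows from the global fixed point in Proposition \ref{prop amenable rot} without local indicability. Part (2) is the same argument in both: $\mathcal A\subseteq\ker(\rot)$ by part (1) and Proposition \ref{prop central}(3), and $G/\mathcal A$ has exponent two, so $\rot(G)$ lies in the $2$-torsion of $\R/\Z$.
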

\begin{proof} Let $c$ be an order two element of $G$. Suppose that $cac^{-1}=a^{-1}$ holds for some $a\in T$. This implies that $\rot(a)$ is either $0$ or $1/2$. Now,  since $G$ is solvable (hence amenable), Corollary \ref{ker loc ind} implies that $\rot:\langle a,c\rangle \to \R/\Z$ is a homomorphism with locally indicable kernel, in particular $c \notin \ker(\rot)$. Thus, if $\rot(a) = 1/2$ then $\rot(ca)=0$ since $\rot(c)=1/2$. But $(ca)^2=c^2=id$ and, since the $\ker(\rot)$ is torsion free, this gives $ca=\id$. But then $a=c$, contradicting the fact that $a$ has infinite order. Thus $\rot(a) =0$ and (1) holds.

To show (2) note that by Proposition \ref{prop central} we have $cxc^{-1}=x^{-1}$ for all $x\in \mathcal{A}$. Thus by the previous paragraph, $\rot(\mathcal{A})=0$. In particular, $\rot:G\to S^1$ factors though $G/\mathcal{A}\simeq \Z_2\times \ldots \times \Z_2$. It follows that $\rot(G)\subseteq \{0,1/2\}$.  
\end{proof}

We now analyze the kernel of the rotation number homomorphism given a faithful action of $G = T \rtimes \Z_2$ on $S^1$.   Since $\rot(G) \subset \{0, 1/2\}$ it suffices to understand homomorphisms $G \to \Z_2$ with locally indicable kernel (see Corollary \ref{ker loc ind}).

\begin{lem} \label{lem 0 and 1} Let $\psi:G\to\Z_2$ be a homomorphism with locally indicable kernel. Then $\psi(\mathcal A)=0$. Suppose further that $\{c,a_0,\ldots,a_m\}$ is a base for $G$  satisfying that, for all $k<i_0$, $a_k a_{i_0+1}a_k^{-1}=a_{i_0+1}^{-1} \; (mod \; T_{i_0+2})$ (an inverting base for instance). Then $\psi(a_{i_0})=1$ implies that  $\psi(a_i)=1$ for $0\leq i \leq i_0$.
\end{lem}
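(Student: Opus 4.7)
The proof naturally splits into the two claims, and in both cases the plan is to exhibit a nontrivial torsion subgroup of $\ker\psi$, which is forbidden by local indicability. Throughout, write $\sigma:T\to T$ for the conjugation action of $c$ on $T$, so $\sigma(t)=ctc^{-1}$ and $\sigma^2=\id$.

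\textbf{Step 1: $\psi(c)=1$.} Since we are in the setting $G=T\rtimes\Z_2$, $c$ is an element of order $2$, so $\langle c\rangle\cong\Z_2$ is a finitely generated torsion subgroup of $G$. If $\psi(c)=0$ then $\langle c\rangle\subseteq\ker\psi$, and $\Z_2$ admits no surjection to $\Z$, contradicting local indicability. Hence $\psi(c)=1$.

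\textbf{Step 2: $\psi(\mathcal{A})=0$.} Suppose for contradiction some $x\in\mathcal A$ had $\psi(x)=1$. By Proposition \ref{prop central}(3), $\sigma(x)=x^{-1}$, so
\[
(cx)^2 \;=\; cxcx \;=\; (cxc^{-1})\, c^2\, x \;=\; x^{-1}x \;=\; \id .
\]
Because $c\notin T$ while $x^{-1}\in T$, we have $cx\neq\id$, so $cx$ has order exactly $2$; and $\psi(cx)=1+1=0$, so $\langle cx\rangle\cong\Z_2\subseteq\ker\psi$, contradicting local indicability.

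\textbf{Step 3: The implication.} Assume $\psi(a_{i_0})=1$ and suppose for contradiction that some $k<i_0$ has $\psi(a_k)=0$; we want to produce a nontrivial finite-order element inside $\ker\psi$. Mirroring Step 2, the goal is to find $y\in T$ with $\psi(y)=1$ and $\sigma(y)=y^{-1}$, for then $yc$ has order $2$ and $\psi(yc)=0$. The natural first candidate is $y_0=a_{i_0}$: the base property only gives $\sigma(a_{i_0})\equiv a_{i_0}^{-1}\pmod{T_{i_0+1}}$, so write $\sigma(a_{i_0})=a_{i_0}^{-1}w$ with $w\in T_{i_0+1}$. The plan is to correct $y_0$ by elements of $T_{i_0+1}$ (which sit in $\ker\psi$ up to using that $\psi(\mathcal{A})=0$) and of the form $a_k^\alpha$, so that $\sigma(y)=y^{-1}$ holds on the nose. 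The extra hypothesis enters here: since $c$, $a_k$ (by assumption), and $a_{i_0}$ (by the base property) all act by inversion on $T_{i_0+1}/T_{i_0+2}$, any product of two of them \emph{centralizes} $T_{i_0+1}/T_{i_0+2}$, and in particular $(ca_{i_0})^2\in T_{i_0+1}$ reduces modulo $T_{i_0+2}$ to a specific power of $a_{i_0+1}$ that one can explicitly cancel by adjusting $y$ by suitable powers of $a_{i_0+1}$ and of $a_k$.

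\textbf{Main obstacle and how to handle it.} The correction must be done exactly in $T_{i_0+1}$, not merely modulo $T_{i_0+2}$, so a one-step modification will generally leave a residual error in $T_{i_0+2}$. I would handle this by descending the convex series: apply the same kind of cancellation in $T_{i_0+2}/T_{i_0+3}$, then $T_{i_0+3}/T_{i_0+4}$, etc., using at each stage that $\mathcal A$ contains enough elements (by Proposition \ref{prop central}(4)) to absorb residual commutators while keeping $\psi$-value unchanged (by Step 2). Since the convex series terminates at $\{id\}$, this recursive adjustment eventually produces the desired $y\in T$ with $\sigma(y)=y^{-1}$ and $\psi(y)=1$, hence a $\Z_2\subseteq\ker\psi$ and the contradiction. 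The delicate point throughout is to ensure the corrections one adds do not alter $\psi(y)$, which is why the first part ($\psi(\mathcal{A})=0$) is used repeatedly and why the extra hypothesis restricted to the index $i_0+1$ is the right amount of information to launch the recursion.
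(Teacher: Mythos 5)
Your Steps 1 and 2 are correct and essentially identical to the paper's argument for the first assertion: $c$ is torsion so $\psi(c)=1$, and for $x\in\mathcal A$ with $\psi(x)=1$ the element $cx$ would be a nontrivial torsion element of the (torsion-free) kernel.

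The main implication, however, has a genuine gap, and the gap is not just that Step 3 is a sketch: the mechanism you propose is the wrong one. You aim to contradict local indicability by exhibiting \emph{torsion} in $\ker\psi$, i.e.\ an element $y\in T$ with $\psi(y)=1$ and $cyc^{-1}=y^{-1}$, so that $(cy)^2=\id$. But local indicability can fail for torsion-free groups -- the paper's own remark about the Promislow group after Proposition \ref{prop tararin char} is exactly this phenomenon -- so there is no reason such a $y$ must exist, and in general it does not. Concretely, your correction scheme has to turn the relation $ca_{i_0}c^{-1}=a_{i_0}^{-1}w$ (with $w\in T_{i_0+1}$) into an exact inversion by multiplying $a_{i_0}$ by elements of $\psi$-value $0$; writing out the resulting equation in $T_{i_0+1}/T_{i_0+2}$ one is forced to solve conditions of the form $1-j=j$ for an integer exponent $j$, i.e.\ to halve an element of $\mathcal A$. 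Since $\mathcal A$ need not be $2$-divisible (the quotients $T_i/T_{i+1}$ can be $\Z$), the recursion you describe cannot be launched in general, and descending the convex series does not repair a failure already present at the top level. Note also that your candidate $y$ never genuinely uses the hypothesis $\psi(a_k)=0$, whereas the conclusion is false without it.

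The paper's proof avoids torsion entirely. Taking $i_0$ maximal with $\psi(a_{i_0})=1$ and supposing $\psi(a_k)=0$ for some $k<i_0$, it considers the finitely generated subgroup $H=\langle a_k,\ ca_{i_0}\rangle\subseteq\ker\psi$ and shows that \emph{any} homomorphism $\phi:H\to\Z$ vanishes: $(ca_{i_0})^2$ lies in $\mathcal A_{k+1}$ and is inverted by $a_k$, forcing $\phi(ca_{i_0})=0$; and $(ca_{i_0})a_k(ca_{i_0})^{-1}=a_k^{-1}w$ with $w\in\mathcal A_{k+1}$ inverted by $a_k$, forcing $\phi(a_k)=0$. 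Thus $H$ admits no surjection onto $\Z$, contradicting local indicability, with no need for $H$ to contain torsion. If you want to salvage your write-up, replace Step 3 by this two-generator subgroup argument; Steps 1 and 2 can stay as they are.
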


\begin{proof} The fact that $\psi(\mathcal A)=0$ follows as in the proof of Lemma \ref{lem inversion} above, using the fact that $cxc=x^{-1}$ for all $x\in \mathcal A$.

Let $i_0$ be the largest index such that $\psi(a_{i_0})=1$, and suppose $\psi(a_k) = 0$ for some $k<i_0$.  Note that $\psi(c) = 1$ since $c$ is torsion and $\ker(\psi)$ is torsion-free, therefore $\psi(ca_{i_0})=0$.  Consider the subgroup $H$ of $\ker(\psi)$ generated by $a_k$ and $ca_{i_0}$, and suppose that there is a surjective homomorphism $\phi: H \rightarrow \mathbb{Z}$.

By Proposition \ref{prop central}(3), since $(ca_{i_0})^2 \in \mathcal{A}_{k+1}$, we have $a_k (ca_{i_0})^2 a_k^{-1} = (ca_{i_0})^{-2}$, forcing $\phi(ca_{i_0}) = 0$.  On the other hand, $(ca_{i_0})a_k(ca_{i_0})^{-1} = a_k^{-1} w$ for some $w \in T_{k+1} \cap H$.  But then $w \in \mathcal{A}_{k+1}$, so $a_k w a_k^{-1} = w^{-1}$ and $\phi(w)=0$.  Applying $\phi$ to both sides of $(ca_{i_0})a_k(ca_{i_0})^{-1} = a_k^{-1} w$ then implies that $\phi(a_k) =0$ as well, a contradiction.

\end{proof}

As a consequence of this we have the following result (compare with the remarks after Proposition \ref{prop central}).  

\begin{coro} \label{kernel cor}
Suppose $T$ is a rank $m+1$ Tararin group, and $G = T \rtimes \Z_2$ acts faithfully on the circle by order preserving homeomorphisms. Let $H = \ker(\rot)$.  
Then $H$ sits in an exact sequence
$$0\to \mathcal A \to H\to \underbrace{\Z_2\times\ldots\times\Z_2}_{m \text{ copies}} \to 0,$$
where $\mathcal A$ has rank $m+1$, and the generator of the $i^{th}$ $\Z_2$ factor ($i=0,\ldots,m-1$) acts by conjugacy on $\mathcal A$ as
\begin{equation}\label{eq involutions} \sigma_i:(x_0,\ldots,x_m)\mapsto (x_0,\ldots,x_{i},-x_{i+1},\ldots, -x_m ).\end{equation}
\end{coro}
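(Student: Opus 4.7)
The plan is to combine the algebraic structure from the previous subsection (the identification of $G/\mathcal{A}$ with a sign-change group acting on $\mathcal{A} \cong \Q^{m+1}$) with the dynamical constraints on $\rot$ given by Lemma~\ref{lem inversion} and Lemma~\ref{lem 0 and 1}. The overall strategy is: locate $\mathcal{A}$ inside $H$, observe $[H:\mathcal A]=2^m$ for index reasons, then pin down the action of $H/\mathcal A$ on $\mathcal A$ by case analysis on the possible rotation numbers of the base elements.

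First, since $G = T\rtimes \Z_2$ is solvable (hence amenable), Proposition~\ref{prop amenable rot} gives that $\rot : G \to \R/\Z$ is a homomorphism, and Lemma~\ref{lem inversion}(2) shows its image lies in $\{0, 1/2\}$; we thus regard $\rot$ as a homomorphism $G \to \Z_2$. Proposition~\ref{prop amenable rot} also yields that $H = \ker(\rot)$ fixes a point on $S^1$, so it acts faithfully on $\R$, and Witte-Morris's theorem then guarantees $H$ is locally indicable (in particular torsion-free). Since $c$ has order $2$, this forces $\rot(c) = 1/2$, so $[G:H] = 2$. Applying Lemma~\ref{lem 0 and 1} with $\psi = \rot$ then gives $\rot(\mathcal{A}) = 0$, i.e. $\mathcal{A} \subseteq H$.

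Next, fix an inverting base $\{c, a_0, \ldots, a_m\}$ and use Proposition~\ref{prop central}(4) to identify $\mathcal{A}$ with a subgroup of $\Q^{m+1}$ by $a_i^2 \mapsto e_i$ for $i=0,\ldots,m$. By Proposition~\ref{prop central}(1) and (3), the conjugation action extends linearly to $\Q^{m+1}$: $c$ acts as $-\id$, and each $a_i$ ($i=0,\ldots,m-1$) acts as $\sigma_i$ (fixing $e_0, \ldots, e_i$ and negating $e_{i+1}, \ldots, e_m$). As noted after Proposition~\ref{prop central}, the images of $\bar c, \bar a_0, \ldots, \bar a_{m-1}$ are linearly independent in the $(m+1)$-dimensional $\mathbb F_2$-vector space of coordinate-wise sign changes, so $G/\mathcal A$ maps isomorphically onto the full sign-change group $\Z_2^{m+1}$. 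Since $H/\mathcal A$ has index $2$ in $G/\mathcal A$, we obtain $H/\mathcal A \cong \Z_2^m$ and the claimed short exact sequence.

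Finally, to identify the action of $H/\mathcal A$ on $\mathcal A$, apply Lemma~\ref{lem 0 and 1} once more: the set $\{ i : \rot(a_i) = 1/2 \}$ is either empty or of the form $\{0, 1, \ldots, k\}$ for some $k \in \{0, \ldots, m-1\}$. Taking $k = -1$ to denote the empty case, a direct computation in the sign-change coordinates (using $\rot(\bar c)=1$, $\rot(\bar a_i)=1$ for $i \le k$, and $\rot(\bar a_i)=0$ otherwise) shows that the image of $H/\mathcal A$ in $\Z_2^{m+1}$ is exactly the coordinate hyperplane fixing the single axis $e_{k+1}$. After relabeling the basis of $\Q^{m+1}$ by swapping $e_0$ with $e_{k+1}$, this hyperplane becomes the subgroup fixing $e_0$, which is precisely the one generated by $\sigma_0, \ldots, \sigma_{m-1}$. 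The hard step is this last one: one must verify that the image is always a \emph{coordinate} hyperplane in the sign-change group (not an arbitrary hyperplane), and this is exactly where the sharp ``downward-closed'' constraint of Lemma~\ref{lem 0 and 1} is essential.
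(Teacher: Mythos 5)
Your argument is correct and follows essentially the same route as the paper: place $\mathcal A$ inside $H=\ker(\rot)$ via Lemma \ref{lem 0 and 1}, use Proposition \ref{prop central} to realize the conjugation action on $\mathcal A\cong$ (subgroup of) $\Q^{m+1}$ as sign changes, and then use the downward-closed constraint on $\{i:\rot(a_i)=1/2\}$ to identify $H/\mathcal A$ with a coordinate hyperplane of the full sign-change group, which after permuting the basis is exactly $\langle\sigma_0,\ldots,\sigma_{m-1}\rangle$. The only (cosmetic) difference is that the paper exhibits explicit generators $ca_{i_0}a_j$ and $a_{i_0}a_j$ of $H/\mathcal A$ and computes their actions directly, whereas you argue abstractly that the image must be the coordinate hyperplane fixing the axis of $a_{k+1}^2$; your identification of the ``coordinate hyperplane, not arbitrary hyperplane'' issue as the crux, resting on Lemma \ref{lem 0 and 1}, is exactly right.
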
 

\begin{proof}

Lemma \ref{lem 0 and 1} implies that $\mathcal A \subset H$, so we have $H \to H/\mathcal A \subseteq G/\mathcal A = \Z_2\times\ldots\times\Z_2$.   We show this map is surjective and that \eqref{eq involutions} holds.  Fix $\{c,a_0,\ldots,a_m\}$ an inverting base for $G$. If $\rot(a_i)=0$ for all $i=0,\ldots,m$, then $H=T$, and we are done by the remarks after Proposition \ref{prop central}. 
If $H\neq T$, then by Lemma \ref{lem 0 and 1} there exists $i_0$ with $0 \leq i_0 < m$ such that
$$
\rot(a_i)=
\begin{cases}
0 \text{ if } i> i_0 \\
1 \text{ if } i\leq i_0.
\end{cases}
$$

Then, if we identify $a_i^2\in \mathcal A$ with $e_{i+1}\in \Q^{m+1}$ ($0$'s everywhere except in position $i+1$) then under conjugacy, for $j=i_{0}+1,\ldots,m-1$, 
$$ca_{i_0} a_j: (x_0,\ldots,x_m)\mapsto (-x_0,\ldots, -x_{i_0},x_{i_0+1},\ldots,x_j,-x_{j+1},\ldots,-x_m) $$
and for $j=0,\ldots,i_0-1$, 
$$a_{i_0}a_j: (x_0,\ldots,x_m)\mapsto (x_0,\ldots, x_j,-x_{j+1},\ldots,-x_{i_0},x_{i_0+1},\ldots,x_m).$$
After a reindexing  the basis of $\Q^{m+1}$ this is precisely equation (\ref{eq involutions}). 
\end{proof}

\subsection{Finishing the proof Theorem \ref{teo tararin circ}}
\label{sec finishing}

Let $G = T \rtimes \Z_2$ as before.    We summarize the results of the previous subsection.  

For any circular order on $G$, the dynamical realization gives a function $\rot:G\to S^1$, which is a homomorphism since $G$ is amenable.  In Proposition \ref{prop central}, we showed that $\rot$ factors through the finite group $G/\mathcal A$, so there are only finitely many possible rotation number homomorphisms on $G$.   Recall also that $\ker(\rot)$ is the linear part of the order on $G$.

Thus, to finish the proof of Theorem \ref{teo tararin circ} it is enough to show that 
the kernel of $\rot$ always admits only finitely many linear orders, i.e. is a Tararin group.  Corollary \ref{kernel cor} reduces this to proving the following proposition.    Its proof completes the proof of Theorem \ref{teo tararin circ}.

\begin{prop}\label{prop tararin char} Suppose $H$ is a locally indicable group satisfying 
$$0\to A\to H\to \Z_2\times \ldots \times \Z_2\to 0,$$
where $A\subseteq \Q^{m+1}$ is a rank $m+1$, torsion free Abelian group and there are $m$ copies of $\Z_2$. Suppose also that the $i^{th}$ $\Z_2$ factor ($i=0,\ldots,m-1$) acts on $\Q^{m+1}$ (by the induced conjugacy action on $A$) as
\begin{equation}\label{eq involutions2} \sigma_i:(x_0,\ldots,x_m)\mapsto (x_0,\ldots,x_{i},-x_{i+1},\ldots, -x_m ).\end{equation}
Then $H$ is a Tararin group.
\end{prop}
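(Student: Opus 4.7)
The plan is to induct on $m$. When $m=0$, $H\cong A$ is a rank-one torsion-free Abelian group, hence trivially a Tararin group. For the inductive step, set $B_i := A\cap\{x_0=\cdots=x_{i-1}=0\}$. I will first show that $[H,H]\subseteq B_1$: since $H/A=\Z_2^m$ is Abelian one has $[H,H]\subseteq A$, so it suffices to check generators. The commutator $[\tilde\sigma_i,a]=\sigma_i(a)a^{-1}$ has zero $x_0$-coordinate because each $\sigma_j$ fixes $x_0$, and for lifts $\tilde\sigma_i,\tilde\sigma_j$ with $i<j$, writing $\tilde\sigma_i\tilde\sigma_j\tilde\sigma_i^{-1}=\tilde\sigma_j\cdot c$ and squaring yields $\sigma_i(\tilde\sigma_j^2)-\tilde\sigma_j^2=(1+\sigma_j)c$, which together with $\tilde\sigma_j^2\in\mathrm{Fix}(\sigma_j)\cap A$ forces the first $i+1$ coordinates of $c$, and hence of $[\tilde\sigma_i,\tilde\sigma_j]=\sigma_j(c)$, to vanish.

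Since $H/B_1$ is Abelian, it sits in an extension $0\to A/B_1\to H/B_1\to \Z_2^m\to 0$ with $A/B_1\cong \pi_0(A)\subseteq \Q$. Because $\Q$ is divisible we have $H^2(\Z_2^m,\Q)=0$, so pushing along $\pi_0(A)\hookrightarrow \Q$ produces a split extension, yielding a homomorphism $\phi:H\to \Q$ with $\phi|_A=\pi_0$. Set $H_1:=\ker\phi$: then $H_1\trianglelefteq H$, $H/H_1\subseteq \Q$ is rank-one torsion-free, and $H_1\cap A=B_1$.

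The crux is to verify that $H_1$ satisfies the hypothesis of the proposition with $m-1$ in place of $m$: namely $H_1/B_1=\langle\sigma_1,\ldots,\sigma_{m-1}\rangle\subseteq \Z_2^m$, and the induced action on $B_1\cong \Q^m$ matches (after the reindexing $x_k'=x_{k+1}$) the required involution form. That $\sigma_0\notin H_1/B_1$ is direct: a lift $\tilde\sigma_0\cdot a\in H_1$ would force $\pi_0(\tilde\sigma_0^2)+2\pi_0(a)=0$, and since $\tilde\sigma_0^2$ lies on the $x_0$-axis, we would obtain $(\tilde\sigma_0 a)^2=0$, producing 2-torsion and contradicting local indicability. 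The inclusions $\sigma_i\in H_1/B_1$ for $i\ge 1$ form the main technical obstacle. The key computation is that for the auxiliary element $\tilde\sigma_0\tilde\sigma_i$, its square $(\tilde\sigma_0\tilde\sigma_i)^2$ lies in $A\cap\{x_1=\cdots=x_i=0\}$, with $x_0$-coordinate $\pi_0(\tilde\sigma_0^2)+\pi_0(\tilde\sigma_i^2)$; then local indicability applied to elements of the form $\tilde\sigma_0\tilde\sigma_i\cdot a$, together with the fact that $A\cap\{x_1=\cdots=x_i=0\}$ has full rank $m+1-i$ in its coordinate subspace, forces $\pi_0(\tilde\sigma_i^2)\in 2\pi_0(A)$, so that a suitable adjustment of the lift by an element of $A$ places $\tilde\sigma_i$ inside $H_1$.

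With these in hand, $H_1$ is locally indicable as a subgroup of $H$, and by induction is a Tararin group with rational series $H_1\supset H_2\supset\cdots\supset H_{m+1}=\{1\}$ satisfying the inversion condition. Prepending $H_0=H$ yields a rational series of length $m+1$ for $H$; each successive quotient is torsion-free rank-one Abelian, and the Tararin inversion property at the top is witnessed by any lift of $\sigma_0$, which acts on $H_1/H_2\cong\pi_1(B_1)$ by $-1$, while the conditions lower down are supplied by the Tararin structure of $H_1$. Hence $H$ is a Tararin group. The hard part is the inclusion $\sigma_i\in H_1/B_1$ for $i\ge 1$: it is the one place where local indicability is used in a nontrivial way, beyond merely guaranteeing torsion-freeness, and where the full-rank hypothesis on $A\subseteq \Q^{m+1}$ is essential to produce the witness element in $A\cap\{x_1=\cdots=x_i=0\}$.
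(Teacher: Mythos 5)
Your overall architecture matches the paper's: both produce the (unique) nontrivial homomorphism $\phi:H\to\Q$ extending $\pi_0$, set $H_1=\ker\phi$, and induct. Your construction of $\phi$ via $[H,H]\subseteq B_1$ and $H^2(\Z_2^m;\Q)=0$ is a pleasant, more elementary substitute for the paper's route (dynamical realization plus invariant Radon measures), and your argument that $\sigma_0$ does not survive into $H_1/B_1$ is correct. But the step you yourself flag as the crux is false as stated: it is \emph{not} true that each $\sigma_i$, $i\geq 1$, lifts into $H_1$, i.e.\ that $\pi_0(\tilde\sigma_i^2)\in 2\pi_0(A)$. Here is a counterexample with $m=2$. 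Let $H=\langle b_0,b_1,b_2\mid b_1b_2b_1^{-1}=b_2^{-1},\ b_0b_1b_0^{-1}=b_1^{-1},\ b_0b_2b_0^{-1}=b_2^{-1}\rangle$, a rank-$3$ Tararin group (hence locally indicable), and take $A=\langle b_0^2,b_1^2,b_2\rangle\cong\Z^3$ with ordered basis $e_0=b_0^2$, $e_1=b_2$, $e_2=b_1^2$, and generators $\sigma_0=\bar b_0$, $\sigma_1=\bar b_0\bar b_1$ of $H/A\cong\Z_2^2$. One checks that $\sigma_0$ acts as $\mathrm{diag}(1,-1,-1)$ and $\sigma_1$ as $\mathrm{diag}(1,1,-1)$ in this basis, so the hypotheses of the proposition hold. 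Yet $\tilde\sigma_1=b_0b_1$ has $\tilde\sigma_1^2=b_0^2=e_0$, so $\pi_0(\tilde\sigma_1^2)=1\notin 2\Z=2\pi_0(A)$ and $\phi(\tilde\sigma_1)=1/2\notin\pi_0(A)$: no adjustment of the lift by an element of $A$ places $\tilde\sigma_1$ in $H_1$. What actually happens is that $H_1=\langle b_1,b_2\rangle$ and $H_1/B_1$ is generated by $\sigma_0\sigma_1=\bar b_1$, not by $\sigma_1$; moreover $\sigma_0\sigma_1$ acts on $B_1$ in the inherited basis $(e_1,e_2)$ as $\mathrm{diag}(-1,1)$, which is \emph{not} the form \eqref{eq involutions2} under your shift reindexing $x_k'=x_{k+1}$ --- you must also reverse (in general, nontrivially permute) the basis.

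The correct statement, which is what the paper proves, is the dichotomy that for each $i\geq 1$ exactly one of $\sigma_i$, $\sigma_0\sigma_i$ survives into $H_1/B_1$ (in the paper's notation, $L(\sigma_i)=p_i/q_i$ with $q_i$ odd, and the two cases are $p_i$ even/odd). The resulting set of involutions on $A\cap H_1$ then follows one of several parity patterns, only some of which can be brought into the form \eqref{eq involutions2} by permuting $e_1,\dots,e_m$ (identity, reversal, or a block rotation); the remaining patterns are precisely those in which every element of $A\cap H_1$ is inverted by some element of $H_1$, and these are excluded because such an $H_1$ would admit no surjection to $\Z$, contradicting local indicability. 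This case analysis --- not the claim that $\pi_0(\tilde\sigma_i^2)\in 2\pi_0(A)$ --- is where local indicability does its real work, and it is missing from your argument. Your proposal would go through if you replaced the assertion ``$\sigma_i\in H_1/B_1$'' by this dichotomy and then carried out the pattern/reindexing analysis.
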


\begin{rem} In Proposition \ref{prop tararin char} and in Lemma \ref{lem 0 and 1}, local indicability is crucial. For instance, the Promislow group $P:=\langle a,b\mid ab^2a^{-1}=b^{-2}, \; ba^2b^{-1}=a^{-2}\rangle$ is a torsion free group sitting in the short exact sequence 
$$0\to \Z^3\to P\to \Z_2\times \Z_2\to 0,$$ where $\Z^3$ is generated by $a^2$, $b^2$ and $(ab)^2$;  the first $\Z_2$ factor acts on $\Z^3$ by conjugation by $a$ as $(x_0,x_1,x_2)\mapsto (x_0,-x_1,-x_2)$, and the second $\Z_2$ factor is conjugation by $b$ via $(x_0,x_1,x_2)\mapsto (-x_0,x_1,-x_2)$.    The group $P$ is solvable, but not locally indicable, so not a Tararin group.  See 
 \cite[\S1.4]{GOD} for more details.  

\end{rem}

  We recall the following notion that will be used in the proof of the proposition. If $G_1$ is a subgroup of $G_2$, then the {\bf\em isolator} of $G_1$ on $G_2$ is 
$Isol_{G_2}(G_1)=\{g\in G_2\mid g^n\in G_1 \text{ for some }n\in \Z\setminus\{0\}\}.$ If $g_1,\ldots,g_n$ belong to $G$, we write $Isol_{G}(g_1,\ldots,g_m)$ to mean the isolator on $G$ of the group generated by $g_1,\ldots,g_n$.

\begin{proof}[Proof of Proposition \ref{prop tararin char}]
Let $H$ be group satisfying the hypothesis of Proposition \ref{prop tararin char}.  Let $e_0,\ldots,e_m$ be a base of $\Q^{m+1}$ implicit in (\ref{eq involutions2}). By eventually taking powers, we may assume also that the $e_i's$ also belong to $A$. Let $Fix(\sigma_i)=\{a\in A\mid \sigma_i a\sigma_i^{-1}=a\}$. Clearly, $\sigma_i^2\in Fix(\sigma_i)=Isol_A( e_0,\ldots,e_i)$. 

To prove that $H$ is a Tararin group, we are going to show that:
 \begin{enumerate}
\item The group $H$ admits a unique (up to multiplicative constant) non trivial homomorphism $L:H\to\R$, and $L(H)$ has rank one.
\item The subgroup $H_1=ker(L)$ is a locally indicable group satisfying the hypothesis of Proposition \ref{prop tararin char}. In particular, there is a unique non trivial homomorphism $L_1:H_1\to \R$, and its image has rank one.
\end{enumerate}

Given this, it follows by induction that $H$ admits a filtration $H\rhd H_1\rhd ...\rhd H_\ell =\{id\}$ with each quotient $H_i/H_{i+1}$ a torsion free, rank one Abelian group.  Further, in each $H_i/H_{i+1}$ there must be an element whose action by conjugation on $H_{i+1}/H_{i+2}$ is non-trivial, as otherwise $H_{i}/H_{i+2}$, being a rank two Abelian group, would not admit a unique torsion free Abelian quotient.  Moreover, since $H$ has a finite index Abelian subgroup, the action by conjugation of that element must  have finite order, so it must be by multiplication by $-1$. We conclude that $H$ is a Tararin group.  

It remains to show 1 and 2.  We start with a Lemma on the structure of virtually Abelian locally indicable groups.  

\begin{lem}\label{lem finite rank} Suppose $H$ is group having a finite index Abelian group $A$ of finite rank. If $H$ is locally indicable, then $H$ admits a torsion-free Abelian quotient.

\end{lem}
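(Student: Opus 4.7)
The plan is to produce a nontrivial homomorphism $\Phi \colon H \to \Q$; then $H/\ker(\Phi)$ embeds as a nontrivial subgroup of $\Q$, giving the desired nontrivial torsion-free Abelian quotient. Any finite-index subgroup of $H$ contains a normal finite-index subgroup with the same properties, so I may assume $A \trianglelefteq H$ and write $F = H/A$, a finite group acting on the finite-rank torsion-free Abelian group $A$ by conjugation. The strategy is two-step: first produce a nonzero $F$-equivariant homomorphism $\psi \colon A \to \Q$ (with trivial $F$-action on $\Q$), and then extend $\psi$ to all of $H$.

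For the first step, the key claim is that $(A \otimes \Q)^F \neq 0$, since by semisimplicity of $\Q[F]$-modules this is equivalent to the existence of such a $\psi$. I argue by contradiction. Choose any finitely generated subgroup of $A$ whose rank equals that of $A$, and replace it by the sum of its $F$-translates to obtain a finitely generated $F$-invariant subgroup $A_0 \le A$ with $A_0 \otimes \Q = A \otimes \Q$. Pick coset representatives $h_1, \ldots, h_n$ of $A$ in $H$ and set $H_0 = \langle A_0, h_1, \ldots, h_n \rangle$. Then $H_0$ is finitely generated, $H_0 \cap A$ is a finite-index (hence finitely generated) Abelian subgroup of $H_0$, and $H_0/(H_0 \cap A) \cong F$. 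From the standard short exact sequence $0 \to (H_0 \cap A)/([H_0, H_0] \cap A) \to H_0^{\mathrm{ab}} \to F^{\mathrm{ab}} \to 0$, tensoring with $\Q$ kills $F^{\mathrm{ab}}$ and identifies $H_0^{\mathrm{ab}} \otimes \Q$ as a quotient of the $F$-coinvariants of $(H_0 \cap A) \otimes \Q$; by Maschke's theorem over $\Q$, these coinvariants agree with the invariants $(A \otimes \Q)^F$. If $(A \otimes \Q)^F$ were zero, then $H_0^{\mathrm{ab}}$ would be torsion, contradicting the fact that $H_0$ is a nontrivial finitely generated subgroup of the locally indicable group $H$ and hence surjects onto $\Z$.

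For the second step, push the extension $0 \to A \to H \to F \to 0$ out along $\psi$ to obtain $0 \to \Q \to E \to F \to 0$; this is a central extension since $\psi$ is $F$-equivariant and $\Q$ carries the trivial $F$-action. Because $F$ is finite and $\Q$ is a $\Q$-vector space, $H^2(F, \Q) = 0$, so the extension splits. A splitting gives a retraction $r \colon E \to \Q$, and $\Phi$ is defined as the composition of the canonical map $H \to E$ with $r$; this $\Phi$ restricts to $\psi$ on $A$ and is therefore nonzero. The main obstacle, in my view, lies in the first step: one must carefully track the difference between $A_0$ and the possibly larger $H_0 \cap A$, verify that the abelianization computation really does detect the $F$-invariant subspace, and confirm that no extra commutators in $[H_0, H_0] \cap A$ destroy the comparison with $(A \otimes \Q)^F$. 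These verifications all ultimately rely on the coincidence of $F$-invariants and $F$-coinvariants over $\Q$ for a finite group.
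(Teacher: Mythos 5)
Your proof is correct, but it takes a genuinely different route from the paper's. The paper argues dynamically: local indicability gives left-orderability, the dynamical realization gives a fixed-point-free action of $H$ on the line, Plante's theorem gives an $A$-invariant Radon measure whose average over the finitely many cosets is $H$-invariant, and the associated translation number is the desired nontrivial homomorphism to $\R$. You instead argue algebraically: after passing to the normal core of $A$ (so $F = H/A$ is finite), you show $(A\otimes\Q)^F \neq 0$ by building a finitely generated subgroup $H_0$ with $H_0/(H_0\cap A)\cong F$ and observing that $H_0^{\mathrm{ab}}\otimes\Q$ is a quotient of the coinvariants $((H_0\cap A)\otimes\Q)_F \cong (A\otimes\Q)^F$, so vanishing of the invariants would force $H_0^{\mathrm{ab}}$ to be torsion, contradicting indicability of $H_0$; then you extend a nonzero $F$-equivariant $\psi\colon A\to\Q$ to all of $H$ via the pushout extension and $H^2(F,\Q)=0$. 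The details you flag as delicate do check out: $A_0=\sum_f f\cdot B$ is $H$-invariant because $A$ is Abelian, $(H_0\cap A)\otimes\Q = A\otimes\Q$ since it is squeezed between $A_0$ and $A$ and tensoring with $\Q$ is exact on torsion-free groups (torsion-freeness of $A$ follows from local indicability of $H$), and the extra commutators $[h,a]=(f\cdot a)-a$ only enlarge the subgroup being quotiented out, which is harmless since you only need a surjection from the coinvariants. Your approach is more self-contained (no orderability or measure-theoretic input, only Maschke-type averaging over $\Q[F]$ and vanishing of $H^2$ of a finite group with $\Q$-coefficients) and yields the slightly sharper conclusion that the quotient can be taken inside $\Q$ with $A$ mapping nontrivially; the paper's proof is shorter given the machinery already deployed elsewhere in the paper and stays within its dynamical framework. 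The only caveat, common to both proofs, is the degenerate case $A=\{1\}$, where $H$ is finite and hence trivial by local indicability; this does not arise in the application.
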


\proof
Since $H$ is countable (it is a finite rank solvable group) and locally indicable, it is left orderable, and so it acts  faithfully on the line by homeomorphisms via the dynamical realization of any of its left orders \cite{GOD,Ghys,MR} (this is the analog of Proposition \ref{prop dyn real} but for left orders). Certainly, we may assume that this action is without global fixed points.  Since the finite index Abelian subgroup $ A$ has finite rank, $A$ preserves a Radon measure $\mu$ on the line, see \cite{plante}. 
Since $H/ A$ is finite, the barycenter $\nu$ of  $H_*\mu$ is an $H$-invariant Radon measure on the line. This gives a translation number homomorphism to $\tau:H\to \R$ defined by $\tau(g)=\nu([0,g(0))$. We claim that this homomorphism is not trivial. Indeed, if $\tau(g)=0$ for all $g\in H$ then each element of $H$ has a fixed point and, as $A$ is finite rank Abelian, $A$ has a global fixed point $p\in \R$. Since $A$ is finite index, and the $H$ action preserves the ordering on the line, $p$ is a global fix point of $H$, contradicting the fact that dynamical realization actions has no global fixed points. Thus $\tau:H\to \R$ is a non trivial hmomorphism.  \hfill $\square$

\vs

Now we apply this to our group $H$, keeping the notation from above.  
Since $\sigma_0^2\in Isol_A(e_0)$, by eventually renaming $e_0$, we can assume $e_0=\sigma_0^2$. Let $L:H\to \R$ be a non trivial homomorphism provided by Lemma \ref{lem finite rank}. Since $\sigma_0$ inverts $e_1,\ldots,e_m$, $L$ factors through $H/Isol_H(e_1,\ldots,e_m)$ which has rank equal to 1. In particular $L(H)$ has rank one, and $L$ is unique up to multiplicative constant.  This proves (1) above.  Note also that $L(\sigma_0)\neq 0$, so we may assume that $L(\sigma_0)=1$.

We now show (2). Let $H_1$ be the kernel of $L$. Observe that $A\cap H_1=Isol_A(e_1,\ldots,e_m)$. Let $L(\sigma_i)=p_i/q_i$ be a reduced fraction. Then $L(\sigma_i^{q_i})=p_i$ so that $L(\sigma_i^{q_i} \sigma_0^{-p_i})=0$.

We first show that each $q_i$ is odd.  Suppose for contradiction that some $q_i$ is even.  Then  $\sigma_i^{q_i} \in A$, and we may write it as $\sigma_i^{q_i} = y_0y$ where $y_0\in Isol_A(e_0)$ and $y\in Isol_A(e_1,\ldots,e_i)$.  Then 
$$(\sigma_i^{q_i}\sigma_0^{-p_i})^2= y_0y\sigma_0^{-2p_i} \sigma_0^{p_i} y_0 y \sigma_0^{-p_i}= y_0^2\sigma_0^{-2p_i} \in Isol_A(e_0)$$
since $p_i$ is necessarily odd, so $\sigma_0^{p_i} y_0 \sigma_0^{-p_i} = y_0^{-1}$.   However, since $\sigma_0^2 \in Isol_A(e_0)$ as well, there exist $s, t$ such that $\sigma_0^{2s} = (\sigma_i^{q_i}\sigma_0^{-p_i})^{2t}$, and thus $L(\sigma_0^{2s}) = L((\sigma_i^{q_i}\sigma_0^{-p_i})^{2t}) =0$, contradicting the fact that the image of $L$ is torsion free.
 We conclude that no $q_i$ is even.  

Let $\sigma'_i:=\sigma_i^{q_i}\sigma_0^{-p_i}$, then $\sigma_i' \in H_1=ker(L)$.  If $p_i$ is even, the action of $\sigma'_i$ agrees with the action of $\sigma_i$ on $A$.  If $p_i$ is odd, then instead $\sigma_i'\in H_1$ inverts $e_1,\ldots, e_i$

In summary we have 
\begin{equation}\label{eq key} \begin{array}{c} p_i \text{ even, then $\sigma_i'\in H_1$ inverts $e_{i+1},\ldots,e_m$}\\
p_i \text{ odd, then $\sigma_i'\in H_1$ inverts $e_1,\ldots, e_i$}\end{array}\end{equation}

Suppose as a first case that $p_1$ is even.  We claim then that all $p_i$ are even. Indeed, suppose that some $p_i$ is odd where $i\geq 2$.  Then $\sigma'_1$ inverts $e_2,\ldots, e_m$ and $\sigma_i'$ inverts $e_1$ (among other things). In particular, any element of $A\cap H_1$ is inverted by some element of $H_1$. Thus, since $A\cap H_1$ has finite index in $H_1$, we conclude that $H_1$ is not indicable, contradicting Lemma \ref{lem finite rank}. (Recall that, by assumption, the group $H_1$ is locally indicable.)  Thus, if $p_1$ is even, then all $p_i$ are even.   In this case, we have a short exact sequence 
$$0\to A \cap H_1 \to H_1 \to \underbrace{\Z_2 \times ... \times \Z_2}_{m-1} \to 0$$
where the homomorphism to the $i^{th}$ $\Z_2$ factor is given by the conjugation action of $\sigma'_i$ on $A \cap H_1$.   The group $A \cap H_1$ inherits a basis from $A$, and equation \eqref{eq key} implies that the conjugation action of $\sigma'_i$, in this basis, is exactly that given in Proposition \ref{prop tararin char}.  Thus, $H_1$ satisfies the hypotheses of the proposition.

To finish we analyze the  case where $p_1$ is odd, so $\sigma_1'$ only inverts $e_1$.  If all $p_i$ are odd, we define a surjection $H_1 \to \underbrace{\Z_2 \times ... \times \Z_2}_{m-1}$ where the homomorphism to the $i^{th}$ $\Z_2$ factor $(i = 0, 1, ... m-2)$ is given by the conjugation action of $\sigma'_{m-1-i}$ on $A \cap H_1=Isol_A(e_1,\ldots,e_m)$.   In coordinates on $A \cap H_1$ induced from the ordered basis $(e_m, e_{m-1}, ...,  e_1)$ this action agrees with that in the statement of Propostion \ref{prop tararin char}. 

Otherwise, let $j$ be the minimum integer such that $p_j$ is even.  Then, just as above, $p_i$ must be even for all $i \geq j$ -- if not, then Equation \eqref{eq key} implies any element of $A \cap H_1$ is inverted by some element $\sigma_k'$ of $H_1$, contradicting indicability.  
In this case, we define $H_1 \to \underbrace{\Z_2 \times ... \times \Z_2}_{m-1}$ where the homomorphism to the $i^{th}$ $\Z_2$ factor $(i = 0, 1, ... m-2)$ is given by conjugation by $\sigma'_{j-1} \sigma'_{j+i}$ for $i = 0, 1, m-j$,  by $\sigma'_{j-1}$  for $i = m-j+1$, and for $i = m-j+1, ..., m-2$ by $\sigma'_{j-1} \sigma'_{i- (m-j+1)}$.   In coordinates on $A \cap H_1$ induced from the ordered basis $(e_j, e_{j+1}, e_{m-1}, e_1, e_2... e_{j-1})$ this action agrees with that in the statement of Proposition \ref{prop tararin char}.   This shows (2), and completes the proof. 
\end{proof}

\subsection{Computing $|\CO(G)|$}  \label{number sec}

We have shown that $G=T\rtimes \Z_n$ admits only finitely many circular orders. However, unlike in the case of left orders on $T$ where $|\LO(T)|=2^{rank(T)}$ (see Remark \ref{rem orders on Tararins}), computing the cardinality of $\CO(G)$ seems to be a difficult task. Proposition \ref{prop extensions} gives a lower bound, and the same proposition together with our work gives  some {\em obvious} upper bound:
$$\phi(n)\, 2^{rank(T)} \leq |\CO(G)|\leq \phi(n)\, 2^{rank(T)}  \; k_{morph}\; k_{lift}.  $$
Here $\phi$ is Euler's totient function, $k_{lift}$ is the number of homomorphisms from $G$ to $\Z_{n/2} $ (see the proof of Proposition \ref{prop finite to one}), and $k_{morph}$ is the number of homomorphisms from $G$ to $\Z_2$---all of which factor through the finite group $G/\mathcal A$---(see the beginning of Section \ref{sec finishing}).

The upper bound is never realized since, as we saw in Corollary \ref{ker loc ind}, the linear part of a circular order must be locally indicable, and this does not hold for every homomorphism from $G$ to $\Z_2$. In fact, Proposition \ref{prop extensions} together with the discussion above provide the following expression for the cardinality of $\CO(G)$ 
$$|\CO(G)|= \phi(n)  2^{rank(T)} \; k_{rot}\; k_{lift},$$
where $k_{rot}$ is the cardinality of the set of homomorphisms from $G$ to $\Z_2$ having a locally indicable kernel (equivalently, $k_{rot}$ is the cardinality of the set of possible rotation number homomophisms $rot:G\to S^1$). This ``formula" is somewhat unsatisfactory because giving a precise expression for $k_{rot}$ in terms of the algebraic structure of $G$ seems a challenging task.

On the other hand, the lower bound for $|\CO(G)|$ is sharp.  For groups $G =T\rtimes \Z_n$ where the $\Z_n$ factor acts by inversion of $T$, Lemma \ref{lem inversion} implies that $T$ is always convex (equivalently, $k_{rot}=1$) and thus  $|\CO(G)|=\phi(n) |\LO(T)|$ (an explicit example could be $G=\Z\rtimes \Z_2$). However, this lower bound is not always achieved; Example \ref{ex weird tararin} gives a group where $\phi(n)\, 2^{rank(T)} < |\CO(G)|$ holds.

\section{$\CO(G)$ is finite or uncountable}  \label{cardinality sec}

In this section we prove that any group having only finitely many circular orderings admits a semi-direct product decomposition $T\rtimes\Z_n$ as in Section \ref{sec Tararins}.  This will be appear as part of the proof of the following theorem.
\begin{thm} \label{teo cardinality} Let $G$ be any group. Then
 $\CO(G)$ is either finite or uncountable. 
\end{thm}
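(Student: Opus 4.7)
The plan is to prove the contrapositive: if $\CO(G)$ is countable, then it is finite. Assume $\CO(G)$ is countable. Since $\CO(G)$ is a compact Hausdorff space, the Baire category theorem forbids it from being a countable union of its nowhere-dense singletons, so $\CO(G)$ must have an isolated point $c$. The strategy is to show that $G$ satisfies condition (2) of Theorem A; then Theorem \ref{teo tararin circ} forces $\CO(G)$ to be finite.

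The first key step is to establish that the linear part $H$ of $c$ (Proposition \ref{prop linear part}) is normal in $G$ and that $G/H$ is Archimedean. Isolation of $c$ means there is a finite set $S\subset G$ with $O_S(c)=\{c\}$. Using the left-invariance of $c$, the conjugation stabilizer $\mathrm{Stab}_G(c)$ coincides with the set of $g\in G$ for which $c(s_1g,s_2g,s_3g)=c(s_1,s_2,s_3)$ holds for all $s_i\in S$. Since the map $g\mapsto (c(s_1g,s_2g,s_3g))_{s_i\in S}$ from $G$ to the finite set $\{\pm 1,0\}^{S^3}$ factors through the coset space $G/\mathrm{Stab}_G(c)$, the stabilizer has finite index in $G$. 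This same finite-index subgroup preserves $c$ under right multiplication, so Corollary \ref{finite index} yields normality of $H$ and the Archimedean property of $G/H$; by Corollary \ref{coro holder}, $G/H$ embeds order-isomorphically as a subgroup of $S^1$.

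With $H$ normal, Proposition \ref{prop extensions}(2) gives a continuous injection $\psi:\LO(H)\times\CO(G/H)\hookrightarrow\CO(G)$ with $c=\psi(<_c,\bar c)$. Isolation of $c$, combined with continuity and injectivity of $\psi$, forces both $<_c$ and $\bar c$ to be isolated in their respective factors. Lemma \ref{S1 nonisolated}, applied to the embedding $G/H\hookrightarrow S^1$, then forces $G/H$ to be finite cyclic, say $G/H\cong\Z_n$. Moreover, since $\LO(H)$ injects into the countable space $\CO(G)$ via $<_H\mapsto\psi(<_H,\bar c)$, it is itself countable; Linnell's theorem \cite{linnell} combined with the existence of the isolated point $<_c$ forces $\LO(H)$ to be finite, and Tararin's Theorem \ref{teo tararin} identifies $H$ as a Tararin group $T$.

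Finally, I would verify that $n$ is even and that $\Z_n$ acts on $T/T_1$ by inversion. The induced $\Z_n$-action on the rank-one torsion-free Abelian quotient $T/T_1$ is by a finite-order automorphism, hence either trivial or by $-1$. If the action were trivial, then $T_1$, being characteristic in the normal subgroup $T$, is itself normal in $G$, and convex in $(G,c)$ since $T_1$ is convex in the convex subgroup $T$; Proposition \ref{prop extensions}(2) applied to $T_1$ then injects $\LO(T_1)\times\CO(G/T_1)$ into $\CO(G)$ with $c$ in the image, while the quotient $G/T_1\cong(T/T_1)\times\Z_n$ is an infinite Abelian group whose circular order space has no isolated points by Theorem B, contradicting the isolation of $c$. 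Hence the action must be by inversion, and since $-1$ has order $2$, $n$ is necessarily even. Thus $G\cong T\rtimes\Z_n$ fits condition (2) of Theorem A, so Theorem \ref{teo tararin circ} gives that $\CO(G)$ is finite, as desired. The most delicate step is the finite-index argument for $\mathrm{Stab}_G(c)$ in the second paragraph---elementary in retrospect, but the circular analogue of the dynamical reductions underlying Linnell's and Navas' proofs for $\LO(G)$, and the point from which the remainder of the argument assembles cleanly.
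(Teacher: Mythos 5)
Your argument follows the same overall route as the paper's (reduce to an order whose conjugation stabilizer has finite index, invoke Corollary \ref{finite index} to make the linear part $H$ normal with $G/H\subset S^1$, then use the product embedding together with Theorem B, Linnell and Tararin to force $G/H$ finite cyclic and $H$ a Tararin group, and finally rule out a trivial action on $H/H_1$). However, the step on which everything else depends---that the conjugation stabilizer of an isolated point $c$ has finite index---is not established. You observe that $g\mapsto \bigl(c(s_1g,s_2g,s_3g)\bigr)_{s_i\in S}$ is constant on cosets of $\mathrm{Stab}_G(c)$ and lands in the finite set $\{\pm1,0\}^{S^3}$, and conclude finite index; but that inference requires the induced map on $G/\mathrm{Stab}_G(c)$ to be \emph{injective}, i.e.\ that any two conjugates $c^{g_1},c^{g_2}$ agreeing on $S^3$ coincide. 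Isolation of $c$ only gives this when one of the two restrictions equals $c|_{S^3}$: the neighborhood $O_S$ isolates $c$, not its conjugates (each $c^g$ is isolated, but by the translated set $Sg^{-1}$). So the fibers of your map are unions of cosets, exactly one of which is a single coset, and nothing prevents the others from being infinite unions. The underlying claim ``isolated $\Rightarrow$ finite conjugation orbit'' is not a formal consequence of isolation---compare the isolated Dubrovina--Dubrovin left orders on braid groups, whose conjugation orbits are infinite---so this cannot be patched cheaply. The paper avoids the issue by running Linnell's minimal-set argument: a minimal set $M\subset\CO(G)$ for the conjugation action either is infinite, in which case it has no isolated points and $\CO(G)$ is uncountable, or is finite, producing a (possibly non-isolated) order with genuinely finite orbit, to which Corollary \ref{finite index} applies. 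Your Baire-category extraction of an isolated point does not substitute for this.

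A second, smaller gap occurs at the end: you pass from ``$G/H\cong\Z_n$ acts on $H/H_1$ by inversion'' directly to ``$G\cong T\rtimes\Z_n$.'' The extension $1\to H\to G\to \Z_n\to 1$ must be shown to split before Theorem \ref{teo tararin circ} can be applied. The paper does this by choosing $g\in G\setminus H$ acting by inversion on every quotient $H_i/H_{i+1}$ and arguing that $g^n\in H$ would be simultaneously inverted and centralized by $g$ modulo the appropriate $H_{i+1}$, forcing $g^n=\id$ by torsion-freeness of the quotients. The remainder of your write-up (isolation passing to both factors of $\LO(H)\times\CO(G/H)$, Theorem B for infinite Abelian quotients, Linnell plus Tararin for $H$, and the dichotomy for the action on $H/H_1$) is sound and coincides with the paper's argument.
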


\noindent{\em Proof.} We follow the Linnell's argument from \cite{linnell}. Let $G$ be a circularly orderable group and consider the action of $G$ on $\CO(G)$ by conjugation.  Let $M$ be a minimal set of this action.  If $M$ is infinite, then it is a Cantor set. So we assume it is finite, in particular there exists some $c \in \CO(G)$ with finite conjugation orbit.  Let $K \subset G$ be the stabilizer of $c$ under the $G$-action on $\CO(G)$ by conjugation. Then $K$ is a finite index subgroup of $G$, and the restriction of $c$ to $K$ is bi-invariant.  Thus, Corollaries \ref{coro holder} and \ref{finite index} imply that, if we denote by $H$ the linear part of $c$, then $G/H$ is order isomorphic to a subgroup of $S^1$. In particular, $G/H$ is Abelian.  

Observe that the natural embedding of $\LO(H) \times \CO(G/H)$ into $\CO(G)$ has $c$ in its image. Therefore, if $G/H$ is infinite, then Theorem \ref{teo abelian} implies that $G/H$ admits uncountably many circular orderings. So the same holds for $\CO(G)$. Further, if $H$ is not a Tararin group, then $\LO(H)$ is uncountable \cite{linnell}, hence the same holds for $\CO(G)$. 

Thus we restrict our attention to the case where $G/H$ is a finite (hence cyclic) group and $H$ is a Tararin group. We denote by $H_1$ the maximal proper convex subgroup (in any left order) of $H$. The subgroup $H_1$ is normal in $G$ since it is a characteristic subgroup of $H$.  In particular we have that $c$ is in the image of the natural embedding 
$$ \LO(H_1)\times \CO(G/H_1)\to \CO(G).$$ 

We analyze two separate cases:
  \begin{enumerate}
\item The conjugation action of $G/H$ on $H/H_1$ is trivial, that is, $G/H_1$ is Abelian.

In this case Theorem \ref{teo abelian} implies that $\CO(G/H_1)$ is uncountable so the same holds for $\CO(G)$.

\item The conjugation action of $G/H$ on $H/H_1$ is not trivial. We claim that in this $G\simeq H\rtimes G/H$, and the action of $G/H$ on $H/H_1$ is by multiplication by $-1$.

Let $g\in G\setminus H$ such that $g$ acts non trivially on $H/H_1$. Since $G/H$ is finite and $H/H_1$ is a rank one abelian group, the conjugation action of $g$ on $H/H_1$ is by multiplication by $-1$, as this is the only automorphism of finite order. 
By eventually picking a different $g'$ in $gH$, we may assume that $g$ also acts non-trivially on each $H_i/H_{i+1}$ for $i=0,1,\ldots$, where $H=H_0\rhd H_1\rhd\ldots \rhd H_k=\{id\}$ is the rational series of the Tararin group $H$.  
Then, if $n$ is the cardinality of $G/H$, $g^n\in H$, but $g$ commutes with $g^n$, so $g^n=id$. Therefore 
$$G= H \rtimes \Z_n $$
where $n$ is even and $\Z_n$ acts by multiplication by $-1$ on $H/H_1$.  In particular, by Theorem \ref{teo tararin circ}, $\CO(G)$ is finite.
$\hfill \square$
 \end{enumerate}

\begin{small}

\vsp

\vsp\vsp\vsp

\noindent Adam Clay\\
Dept. Mathematics, University of Manitoba\\
420 Machray Hall, Winnipeg, MB, Canada R3T 2N2\\
Adam.Clay@umanitoba.ca

\vsp
\vsp

\noindent Kathryn Mann \\
Dept. Mathematics, UC Berkeley\\
970 Evans hall, Berkeley, CA, 94608, USA\\
kpmann@math.berkeley.edu

\vsp
\vsp

\noindent Crist\'obal Rivas \\
Dpto. de Matem\'atica y C.C.,Universidad de Santiago de Chile\\
Alameda 3363, Estaci\'on Central, Santiago, Chile \\
cristobal.rivas@usach.cl
\end{small}


\begin{thebibliography}{Dillo 83}

\bibitem{BS} {\sc H. Baik, E. Samperton.}
Spaces of invariant circular orders of groups.
Preprint. 	arXiv:1508.02661 [math.GR] (2015).   To appear in {\em Groups, Geometry, and Dynamics.}

\bibitem{GOD} {\sc B. Deroin, A. Navas \& C. Rivas.} {\em Groups, Orders, and Dynamics}. 
 Preprint (2014), arXiv:1408.5805.

\bibitem{Calegari} {\sc D. Calegari.}
Circular groups, planar groups, and the Euler class.
{\em Geometry and Topology Monographs, Proceedings of the Casson Fest 7} (2004), 431--491.
 
 \bibitem{conrad} {\sc P. Conrad.} Right-ordered groups. {\em Mich. Math. Journal}
{\bf 6} (1959), 267--275.
 
 \bibitem{Ghys} {\sc E. Ghys.}
{Groups acting on the circle.}
\textit{L'Ens. Math.}, {\bf 47} (2001), 329--407. 

\bibitem{Holder} {\sc O. H\"older.}
Die Axiome der Quantit\"at und die Lehre vom Mass. Ber.
{\em Verh. Sachs. Ges. Wiss. Leipzig, Math. Phys.} {\bf C1. 53} (1901) 1--64.

\bibitem{JP}{\sc J. Jakub\'{i}k, C. Pringerov\'{a}.}  Representations of cyclically ordered groups. 
{\em \v{C}asop. P\v{e}stov. Matem.} {\bf 113} (1988), 184--196.

\bibitem{kopitov medvedev} {\sc V. Kopytov \& N. Medvedev.} {\em Right ordered groups.}
Siberian School of Algebra and Logic, Plenum Publ. Corp., New York
(1996).

\bibitem{linnell} {\sc P. Linnell.} The space of left orders of a group is either
finite or uncountable. {\em Bull. Lond. Math. Soc.} {\bf 43} (2011), 200-202.

\bibitem{MR} {\sc K. Mann, C. Rivas.} Group orderings, dynamics, and rigidity. Preprint arXiv:1607.00054v2 (2016).

\bibitem{morris} {\sc D. Morris.} Amenable groups that acts on the line.  {\em Algebr. Geom. Topol.}  {\bf 6} (2006), 2509--2518.

\bibitem{navas} A. Navas,
\textit{On the dynamics of left-orderable groups}.
Ann. Inst. Fourier (Grenoble) 60 (2010), 1685-1740.

\bibitem{NRC} {\sc A. Navas, C. Rivas.}  A new characterization of Conrad's property for group orderings, with applications.
{\em Algebraic \& Geometric Topology} {\bf 9.4} (2009) 2079--2100.

\bibitem{plante} {\sc J. Plante.} On solvable groups acting on the real line.
{\em Trans. AMS} {\bf 278} (1983), 401--414.

\bibitem{rivas comm} {\sc C. Rivas.} On groups with finitely many Conradian orderings
{\em Comm. in. Algebra} {\bf 40} (2012), 2596-2612.

\bibitem{rolfsen} {\sc A. Rhemtulla, D. Rolfsen} Local indicability in ordered groups: braids and elementary amenable groups {\em Proc. Amer. Math. Soc.} 130.4 (2002) 2569--2577.

\bibitem{robinson} {\sc D.J.S. Robinson} {\em Finiteness condition and generalized solvable
groups 1-2}. Springer-Verlag (1972).

\bibitem{Stolz} {\sc  O. Stolz.} Uber das Axiom des Archimides. 
{\em Math. Ann.} {\bf 39} (1891): 107-112.

\bibitem{swier} {\sc  S. \'Swierczkowski.} On cyclically ordered groups. 
{\em Fundamenta Mathematicae} {\bf 47} (1959): 161--166.

\bibitem{sikora} {\sc A. Sikora,}
Topology on the spaces of orderings of groups.
\textit{Bull. Lond. Math. Soc.} {\bf 36} (2004), 519--526.

\bibitem{tararin} {\sc V. M. Tararin,}
On groups having a finite number of right orders.
{\em Dep. VINITI report}, Moscow, 1991.

\bibitem{zheleva} {\sc S.D. Zheleva} Cyclically ordered groups.
\textit{Siberian Math. Journal.} {\bf 17} (1976), 773--777.


\end{thebibliography}
\end {document}